\tikzset{
	open/.code     = {\tikzset{right hook->, circled};},
	closed/.code   = {\tikzset{right hook->, slashed};},
	open'/.code    = {\tikzset{left hook->, circled};},
	closed'/.code  = {\tikzset{left hook->, slashed};},
	circled/.code  = {\tikzset{markwith = {\draw (0,0) circle (.375ex);}};},
	slashed/.code  = {\tikzset{markwith = {\draw[-] (-.4ex,-.4ex) -- (.4ex,.4ex);}};},
	markwith/.code ={
		\pgfutil@ifundefined%
		{tikz@library@decorations.markings@loaded}%
		{\pgfutil@packageerror{tikz}{You need to say %
				\string\usetikzlibrary{decorations.markings} to use arrows with markings}{}}{}%
		\pgfkeysalso{/tikz/postaction = {
				/tikz/decorate,
				/tikz/decoration={markings, mark = at position 0.5 with {#1}}}
		}
	},
}
\DeclareMathAlphabet{\mathpzc}{OT1}{pzc}{m}{it}
\def\cleardoublepage{\clearpage\if@twoside \ifodd\c@page\else  
	\hbox{}                                                        
	\vspace*{\fill}                                                
	\begin{center}                                                 
		\*                                                             
	\end{center}                                                   
	\vspace{\fill}                                                 
	\thispagestyle{empty}                                          
	\newpage                                                       
	\if@twocolumn\hbox{}\newpage\fi\fi\fi}                         
\renewcommand\part{%
	\if@openright
	\cleardoublepage
	\else
	\clearpage
	\fi
	\thispagestyle{empty}
	\if@twocolumn
	\onecolumn
	\@tempswatrue
	\else
	\@tempswafalse
	\fi
	\null\vfil
	\secdef\@part\@spart}
\g@addto@macro{\UrlBreaks}{\UrlOrds} 
\newsavebox\MBox
\newcommand{\dotminus}{\mathbin{\text{\@dotminus}}}
\newcommand{\@dotminus}{%
	\ooalign{\hidewidth\raise1ex\hbox{.}\hidewidth\cr$\m@th-$\cr}%
}
\newcommand{\into}{\hookrightarrow}
\newcommand{\allora}{\Rightarrow}
\newcommand{\sseq}{\subseteq}
\newcommand{\set}[1]{ \left \{ #1 \right \} }
\newcommand{\mr}{\mathrm}
\newcommand{\mc}[1]{\mathcal{#1}}
\newcommand{\mb}{\mathbb}
\newcommand{\mbbm}{\mathbbm}
\newcommand{\mf}{\mathfrak}
\newcommand{\Z}{\mathbb{Z}}
\newcommand{\pro}{\mathbb{P}}
\newcommand{\A}{\mathbb{A}}
\newcommand{\sk}{\mathbbm{k}}
\newcommand{\st}[2]{\mathrel{\raisebox{#1 pt}{$ \mathrel{\stretchto{\mid}{#2 ex}} $}}}
\newcommand{\mo}[1]{\mc O_{#1}}
\newcommand{\catname}[1]{\mathbf{#1}}
\newcommand{\oocatname}[1]{\scaleobj{1.25}{\mathpzc{#1}}}
\newcommand{\colim}[1]{\underset{#1}{\mathrm{colim}}\ }
\newcommand{\epfs}{{}_{\#}}
\newcommand\mydiagbox[2]{\hbox{\tabcolsep=\arraycolsep\diagbox{$#1$}{$#2$}}}
\newcommand{\Th}[2]{\mr{Th}_{#1}\left( #2 \right)}
\newcommand{\restrict}[2]{{
		\left.\kern-\nulldelimiterspace 
		#1 
		\vphantom{\big|} 
		\right|_{#2} 
}}
\newcommand{\bigslant}[2]{
	\mathchoice
	{
		{\raisebox{0em}{$#1$}\!\!\!\;\!\!\;\left/\!\!\;\!\!\;\raisebox{-0em}{$#2$}\right.}%
	}
	{
		#1\:\!/\:\!#2
	}
	{
		#1\:\!/\:\!#2
	}
	{
		#1\:\!/\:\!#2
	}
}
\newcommand{\quot}[2]{
	\mathchoice
	{
		{\raisebox{.2em}{$#1$}\!\!\,\left/\!\raisebox{-.2em}{$#2$}\right.}%
	}
	{
		#1\!\:/\!\!\:\:#2
	}
	{
		#1\!\:/\!\!\:\:#2
	}
	{
		#1\!\:/\!\!\:\:#2
	}
}
\DeclareRobustCommand*{\mfaktor}[3][]
{
	{ \mathpalette{\mfaktor@impl@}{{#1}{#3}{#2}} }
}
\newcommand*{\mfaktor@impl@}[2]{\mfaktor@impl#1#2}
\newcommand*{\mfaktor@impl}[4]{
	\settoheight{\faktor@zaehlerhoehe}{\ensuremath{#1#2{#3}}}%
	\settoheight{\faktor@nennerhoehe}{\ensuremath{#1#2{#4}}}%
	\raisebox{-0.5\faktor@zaehlerhoehe}{\ensuremath{#1#2{#3}}}%
	\mkern-4mu\reflectbox{\,$ / $\,}\mkern-5mu%
	\raisebox{0.5\faktor@nennerhoehe}{\ensuremath{#1#2{#4}}}%
}
\newcommand{\bigperp}{%
	\mathop{\mathpalette\bigp@rp\relax}%
	\displaylimits
}
\newcommand{\bigp@rp}[2]{%
	\vcenter{
		\m@th\hbox{\scalebox{\ifx#1\displaystyle2.1\else1.5\fi}{$#1\perp$}}
	}%
}
\newcommand\blfootnote[1]{%
	\begingroup
	\renewcommand\thefootnote{}\footnote{#1}%
	\addtocounter{footnote}{-1}%
	\endgroup
}
\DeclareMathOperator{\Hom}{Hom}
\DeclareMathOperator{\spec}{Spec}
\DeclareMathOperator{\sym}{Sym}
\DeclareMathOperator{\Map}{Map}
\DeclareMathOperator{\iMap}{\underline{Map}}
\g@addto@macro\bfseries{\boldmath}
\newcommand{\cbigotimes}{\DOTSB\cbigotimes@\slimits@}
\newcommand{\cbigotimes@}{\mathop{\widehat{\bigotimes}}}
\newtheorem{thm}{Theorem}
\numberwithin{thm}{section} 
\newtheorem{co}[thm]{Corollary}
\newtheorem{lemma}[thm]{Lemma}
\newtheorem{pr}[thm]{Proposition}
\theoremstyle{definition}
\newtheorem{defn}[thm]{Definition}
\newtheorem{rmk}[thm]{Remark}
\newtheorem{notation}[thm]{Notation}
\newtheorem{construction}[thm]{Construction}
\newtheorem{exa}[thm]{Example}
\newtheorem{ithm}{Theorem}
\newtheorem{ipr}[ithm]{Proposition}
\title{$\mr{KW}$-Euler Classes\\
	\large via Twisted Symplectic Bundles}
\author{Alessandro D'Angelo }
\date{}
\begin{document}

		\maketitle{}
	\begin{abstract}
		In this paper we are going to compute the $ \mr{KW} $-Euler classes for rank 2 vector bundles on the classifying stack $ \mc {B}N $, where $N$ is the normaliser of the standard torus in $SL_2$ and $\mr{KW}$ represents  Balmer's derived Witt groups. Using these computations we will recover, through a new and different strategy, the formulas previously obtained by Levine in Witt-sheaf cohomology. In order to obtain our results, we will prove K\"unneth formulas for products of $GL_n$'s and $SL_n$'s classifying spaces and we will develop from scratch the basic theory of twisted symplectic bundles with their associated twisted Borel classes in $SL$-oriented theories. 
	\end{abstract}
	
	\tableofcontents
	
	\section*{Introduction}

	 One of the main reasons to extend intersection theoretic techniques to motivic homotopy theory is that, one can get much richer invariants. Indeed, in the motivic homotopy category $ \mr{SH}(\sk) $, by a celebrated theorem of Morel, we have that  the endomorphisms of the sphere spectrum corresponds to quadratic forms in the Grothendieck-Witt ring: $ \mr{End}_{\mr{SH}(\sk)}(\mbbm 1_{\sk})\simeq \mr{GW}(\sk) $.  In the recent years, this led to much progress in the field, now called $ \A^1 $-enumerative geometry, with a lot of new interesting results by Hoyois, Kass, Levine, McKean, Pauli, Solomon, Wendt, Wickelgren and many others (for example one can look in \cite{Bachmann_Wickelgren} and references therein).\\

	 In this paper we are going to compute the $ \mr{KW} $-Euler classes for some special rank 2 vector bundles of the ind-scheme $ {B}N $, where $N$ is the normaliser of the standard torus in $SL_2$. 
	 The main source of inspiration was \cite{Motivic_Euler_Char}, where Levine computed Euler classes in Witt-sheaf cohomology. In our attempt to reproduce similar formulas for the spectrum $ \mr{KW} $, representing Balmer's derived Witt groups, we had to dive into some K\"unneth formulas and into the theory of twisted symplectic bundles obtaining different results that are of independent interest.
	 
	 
	 One can think of Witt sheaf cohomology, used in \textit{loc. cit.}, and the Witt spectrum as quadratic analogues of Chow groups and K-theory. Indeed, Witt sheaf cohomology and Witt theory capture the essentially quadratic information in Chow-Witt groups and Hermitian K-theory, which are quadratic refinements of Chow groups and K-theory respectively.\\
	 
	 Notably, Witt sheaf cohomology is a homotopy module and, therefore, it is simpler to work with from a homotopical perspective: it is \textit{bounded} in the sense of \cite[Definition 4.13]{Levine_Atiyah-Bott}. However, $ SL_{\eta} $-oriented spectra, like Witt theory $ \mr{KW} $,  are not bounded. Care is needed when working with general spectra on algebraic stacks or ind-schemes; the necessary technical tools are provided in \cite{Motivic_Vistoli}.\\
	 
	 Here is a detailed synopsis of the paper. In the first section we will provide the reader with a recollection of facts ranging from intersection theory on algebraic stacks, to orientations and Thom isomorphisms in the motivic context. Then, we will then show how to reduce computations of characteristic classes from general vector bundles to special linear ones. In the third section we will deduce some K\"unneth formulas for $SL_{\eta}$-oriented theories: in this kind of generality, these formulas are new to the best of author knowledge, and of independent interest. In particular we will prove:
	 
	 \begin{ipr}[Proposition {\ref{ch3:_Kunneth_Formula} }]
	 	Let $ S \in \catname{Sm}_{\bigslant{}{\sk}} $. Let $ \mr{A} $ be an $ SL_{\eta} $-oriented ring spectrum. Let $ \mc X=\prod_{i=1}^{s} \mc{B}GL_{n_i,S} \times \prod_{j=s+1}^{s+r} \mc{B}SL_{n_j,S}  $. Then we have:
	 	
	 	\[ \mr{A}^{\bullet}(X; L)\simeq \cbigotimes_{i=1}^{s} \mr{A}^{\bullet}\left( \mc{B}GL_{n_i}; L_i \right) \widehat\otimes_{A(\sk)} \cbigotimes_{j=s+1}^{s+r} \mr{A}^{\bullet}\left( \mc{B}SL_{n_j}; L_j \right) \]
	 	\noindent with $ L:=L_1 \boxtimes \ldots \boxtimes L_{s+r} $ and $ L_i\in \mr{Pic}(\mc{B}GL_{n_i}) $ for $ i=1,\ldots,s $ and $ L_j\in \mr{Pic}(\mc{B}SL_{n_j}) $ for $ j=s+1,\ldots,s+r $ and where $ \widehat{\otimes} $ denotes the completed tensor product.
	 \end{ipr}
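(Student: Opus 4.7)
The plan is to reduce to a two-factor product, exhaust each classifying stack by smooth finite-type approximations, prove a Künneth isomorphism at each finite level, and then pass to the inverse limit: this will automatically produce the completed tensor product. By associativity of both the external product of stacks and of $ \widehat\otimes $ over $ A(\sk) $, it suffices to show that for $ \mc X, \mc Y $ each equal to $ \mc BGL_n $ or $ \mc BSL_n $, and any line-bundle twists $ L_{\mc X}, L_{\mc Y} $,
\[ \mr{A}^{\bullet}\bigl(\mc X\times \mc Y; L_{\mc X}\boxtimes L_{\mc Y}\bigr)\simeq \mr{A}^{\bullet}(\mc X; L_{\mc X})\,\widehat{\otimes}_{A(\sk)}\,\mr{A}^{\bullet}(\mc Y; L_{\mc Y}), \]
the general case following by iteration on $ s+r $.

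Next I would use the standard ind-smooth exhaustion of each classifying stack: $ \mc BGL_n\simeq \oocolim_k X_k $ with $ X_k $ a Grassmannian, and $ \mc BSL_n\simeq \oocolim_k \tilde X_k $ obtained by removing the zero section of the determinant line on $ X_k $. Using the twisted Borel/symplectic classes built earlier in the paper, together with the reduction of characteristic classes from arbitrary vector bundles to special linear ones, I would verify that each finite-level twisted cohomology $ \mr{A}^{\bullet}(X_k; L) $ is a finitely generated \emph{free} $ A(\sk) $-module, and likewise on the products $ X_k\times Y_\ell $. Freeness is the key input: with it, the usual Künneth isomorphism in the untwisted situation upgrades to the line-bundle twisted setting and yields
\[ \mr{A}^{\bullet}\bigl(X_k\times Y_\ell;L_{\mc X}\boxtimes L_{\mc Y}\bigr)\simeq \mr{A}^{\bullet}(X_k; L_{\mc X})\otimes_{A(\sk)} \mr{A}^{\bullet}(Y_\ell; L_{\mc Y}). \]

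Finally, working inside the six-functor framework for ind-schemes and stacks of \cite{Motivic_Vistoli}, I would compute the cohomology of the ind-stack as the inverse limit over $ k,\ell $ of the finite-level cohomology via the associated Milnor short exact sequence. Since every transition map in the Grassmannian tower is a closed immersion inducing a surjection on cohomology (another consequence of the Borel-style filtration), the tower is Mittag-Leffler, the $ \lim^1 $ vanishes, and the inverse limit of tensor products of finitely generated free modules is by definition the completed tensor product. The main obstacle is that $ \mr{KW} $ and other $ SL_\eta $-oriented spectra are not bounded in the sense of \cite[Definition 4.13]{Levine_Atiyah-Bott}, so higher $ \lim^i $ terms cannot be made to vanish by dimensional reasons and one must honestly verify Mittag-Leffler and freeness at every stage — this, together with compatibly tracking the line-bundle twists across the tower, is where most of the technical work will live.
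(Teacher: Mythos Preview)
Your proposal is correct and follows essentially the same route as the paper: reduce to a pair of factors, pass to finite-level special-linear Grassmannian approximations where cohomology is a finitely generated free $A(\sk)$-module by Ananyevskiy's computation (\cref{ch3:_Ana_Thm_9+}, \cref{ch3:_Ana_thm_9_NisLoc}), deduce a finite-level K\"unneth, check Mittag--Leffler, and take the limit to obtain the completed tensor product. The paper compresses the last step to a single sentence, leaning on \cref{ch3:_Kunneth_BSL_n} and \cref{ch3:_Kunneth_BGL} for the untwisted statement and then invoking a Mayer--Vietoris argument to pass from the untwisted case to arbitrary line-bundle twists.

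Two small remarks on your write-up. First, the ``twisted Borel/symplectic classes'' of Section~4 are not the relevant input here and in fact appear \emph{after} this proposition in the paper; what you actually need is the Pontryagin/Euler-class presentation of $A^\bullet(SGr_S(2n,2k+1))$ from \cref{ch3:_Ana_Thm_9+} together with the splitting $A^\bullet(BGL_n)\oplus A^\bullet(BGL_n;\det E_n)\simeq A^\bullet(BSL_n)$ of \cref{ch3:_4.1_MEC} to control the twisted pieces (and note $\mr{Pic}(\mc BSL_n)$ is trivial, so the $L_j$ on the $SL$-factors are automatically trivial). Second, the paper's way of handling the twist---Mayer--Vietoris over a trivialising cover for $L$, reducing to the already-proved untwisted K\"unneth---is slightly cleaner than separately verifying freeness of every finite-level \emph{twisted} group, though your route via \cref{ch3:_4.1_MEC} at finite levels also works and amounts to the same thing.
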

	 
	  In the fourth section, we will introduce \textit{twisted symplectic bundles} to handle formal ternary laws (in the sense of \cite{Déglise_Fasel_Borel_Char}) that will be crucial for our last computation, \Cref{ch3:_recursive_formulas_proposition}.  We will prove a twisted quaternionic projective bundle formula and a twisted version of the Cartan sum formula for $SL$-oriented spectra (cf. \Cref{ch3:_Twisted_Proj_Bun_Thm} and \Cref{ch3:_Cartan_Sum_Formula_Twisted} respectively):
	  
	  \begin{ithm}[Theorem {\ref{ch3:_Twisted_Proj_Bun_Thm}}]
	  		Let $ \mb E $ be an $ SL $-oriented ring spectrum with a twisted Thom structure. Let $ (V, \omega^L) $ be a twisted symplectic bundle of rank 2n over a scheme $ X\in \catname{Sm}_{\bigslant{}{S}} $, let $ (\mc U, \restrict{\omega^L}{\mc U}) $ be the tautological rank 2 bundle over $ \mr H\pro^n_L $ and let $ \zeta:=b(\mc U, \restrict{\omega^L}{\mc U}) $ be its Borel class. Write $ \pi: \mr H\pro^n_L \rightarrow X $ for the projection map. Then for any closed subset $ Z \sseq X $ we have the isomorphism  of $ \mb E(X) $-modules:
	  	\[ (1,\zeta,\zeta^2,\ldots,\zeta^{n-1}): \bigoplus_{j=0}^{n-1}\mb E_Z(X; L^{\otimes -j}) \longrightarrow \mb E_{\pi^{-1}(Z)}(\mr H\pro^n_L) \]
	  \end{ithm}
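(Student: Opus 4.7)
I would prove this by induction on $n$, following the Panin--Walter scheme for the untwisted quaternionic projective bundle theorem, with the twist $L$ carefully tracked throughout. The base case $n = 1$ is immediate: $\mr H\pro^1_L$ coincides with $X$ (a rank-$2$ twisted symplectic bundle is its own unique rank-$2$ symplectic subbundle), and the map reduces to the identity on $\mb E_Z(X)$.

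For the inductive step, I first reduce to the case where $(V, \omega^L)$ is trivial. Since $\mr{Sp}_{2n}$ is a special group, $(V, \omega^L)$ is Zariski-locally trivializable (after locally trivializing $L$), and a Mayer--Vietoris descent argument---using that $\mb E$-cohomology with supports satisfies Mayer--Vietoris for Zariski covers and that the map $(1,\zeta,\ldots,\zeta^{n-1})$ is compatible with restriction to opens---reduces the claim to a Zariski cover over which everything trivializes. In the trivial case, I choose a symplectic orthogonal splitting $V = V' \oplus V''$ with $\mr{rk}(V') = 2(n-1)$ and $\mr{rk}(V'') = 2$, inducing a closed immersion $\iota: \mr H\pro^{n-1}_L \hookrightarrow \mr H\pro^n_L$ parametrizing those rank-$2$ symplectic subbundles of $V$ contained in $V'$, and denote by $U$ the open complement.

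The localization sequence for the pair $(\mr H\pro^n_L, U)$, combined with the $SL$-oriented purity equivalence and the twisted Thom isomorphism, identifies the open-piece cohomology $\mb E_{U \cap \pi^{-1}(Z)}(U) \cong \mb E_Z(X; L^{\otimes -(n-1)})$ and relates the closed-piece cohomology to $\mb E_{\iota^{-1}\pi^{-1}(Z)}(\mr H\pro^{n-1}_L)$, to which the inductive hypothesis applies since $\iota^*\zeta$ is exactly the Borel class of the tautological rank-$2$ bundle on $\mr H\pro^{n-1}_L$. The power $\zeta^{n-1}$ splits this sequence: its restriction to $U$ realizes the open-piece generator via the twisted Thom class, while the lower powers $\zeta^j$ for $j < n-1$ account for the contributions from the closed stratum by inductive hypothesis. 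Assembling the split short exact sequence with the inductive decomposition yields the claimed isomorphism.

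\textbf{Main obstacle.} The principal technical difficulty is the careful bookkeeping of the twist $L$ through the purity equivalence and the Thom isomorphism: one must compute the determinant of the normal bundle of $\iota$ in terms of $L$, verify that the twisted Thom structure produces exactly the Picard shift $L^{\otimes -(n-1)}$ on the open stratum, and confirm that $\zeta^{n-1}|_U$ is a Thom generator (up to a unit) in the twisted sense. This hinges on a delicate compatibility between Borel classes of twisted symplectic bundles and the twisted Thom structure developed earlier in the paper, which is substantially subtler than in the untwisted Panin--Walter proof.
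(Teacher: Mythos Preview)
Your proposal is correct in spirit but takes a substantially longer route than the paper. The paper's proof is essentially one line: choose a Zariski cover of $X$ on which the line bundle $L$ trivializes, and use Mayer--Vietoris with respect to that cover to reduce directly to the \emph{untwisted} quaternionic projective bundle theorem of Panin--Walter \cite[Theorem 8.2]{Panin-Walter}. Once $L$ is trivial, an $L$-twisted symplectic bundle is nothing but an ordinary symplectic bundle, the twisted Borel class becomes the ordinary Borel class, and the known result applies verbatim---there is no need to trivialize $(V,\omega^L)$ further or to rerun the Panin--Walter induction.

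Your approach instead reproduces the entire Panin--Walter inductive scheme (splitting $V = V' \oplus V''$, localization sequence, purity, splitting by $\zeta^{n-1}$) while carrying the twist $L$ along. This is not wrong, and it has the virtue of being self-contained, but it is exactly the difficulty you flag as the ``main obstacle'': tracking $L$ through purity and the normal-bundle determinant is delicate and entirely avoidable. The paper's insight is that the only new ingredient relative to Panin--Walter is the presence of $L$, and since that ingredient is Zariski-locally trivial, Mayer--Vietoris eliminates it at the outset. Your worry about compatibility of twisted Borel classes with the twisted Thom structure never arises in the paper's argument because no such compatibility is ever invoked in the twisted setting.
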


	  We will then use the theory of twisted Borel classes to extend \cite[Lemma 8.2]{Ananyevskiy_FTL_Witt} to general vector bundles in \Cref{ch3:_Ana_Stab_Op_Lemma}. Finally, we will end providing the formulas for the Euler classes of the bundles $\widetilde{\mo{}}^{\pm}(m) $ (already introduced in \cite{Motivic_Euler_Char}), getting a complete description in terms of recursing formulas:
	  
	  \begin{ithm}[{\Cref{ch3:_recursive_formulas_proposition}}]
	  		In $\mr{KW}\left(  BN, \det(\widetilde{\mo{}}^{\pm}(m)) \right) $, we have the following:
	  		\begin{enumerate}
	  			\item For $ m=2n+1 $:
	  			\begin{equation} 
	  				b_1\left(\widetilde{\mo{}}^{+}(m)\right)=\sum_{k=0}^{n} (-1)^{n-k}\alpha_{k,n}\gamma^{k}e^{2k+1}
	  			\end{equation}
	  			\noindent with $ e=e(\widetilde{\mo{}}^{+}(1) ) $ and $ \alpha_{k,n} $ defined by recurrence relations.
	  			\item For $ m=2n $:
	  			\begin{equation} \label{ch3:_Even_recursive_formula}
	  				b_1\left(\widetilde{\mo{}}^{+}(m)\right)=\tilde{e}\left(\sum_{k=0}^{n-1} (-1)^{n-k+1}\beta_{k,n}\gamma^{k}e^{2k}\right)
	  			\end{equation}
	  			\noindent with $ e=e(\widetilde{\mo{}}^{+}(1) ) $, $\tilde e=e(\widetilde{\mo{}}^{+}(2) )$  and $ \beta_{k,n} $ defined by recurrence relations.
	  		\end{enumerate}
	  \end{ithm}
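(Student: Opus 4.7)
The plan is to prove both formulas by induction on $m$, using a Clebsch--Gordan-type tensor splitting of the bundles $\widetilde{\mo{}}^{\pm}(m)$ together with the twisted Cartan sum formula \Cref{ch3:_Cartan_Sum_Formula_Twisted} and the extension of Ananyevskiy's stability lemma \Cref{ch3:_Ana_Stab_Op_Lemma}. Recall that $\widetilde{\mo{}}^{\pm}(m)$ is the rank $2$ bundle on $\mc{B}N$ induced from the $2$-dimensional representation of $N=T\rtimes\Z/2$ attached to the character $\chi^{m}$ of $T$, with the $\pm$ sign encoding the possible twist by the sign representation of the component group $N/T$. Decomposing tensor products of such representations yields an isomorphism
\[ \widetilde{\mo{}}^{+}(m)\otimes\widetilde{\mo{}}^{+}(1)\;\simeq\; \widetilde{\mo{}}^{+}(m+1)\oplus\widetilde{\mo{}}^{\varepsilon(m)}(m-1) \]
where $\varepsilon(m)\in\{+,-\}$ depends on the parity of $m$, and this lifts to an isomorphism of rank $4$ twisted symplectic bundles on $\mc{B}N$ after a suitable choice of the symplectic forms.

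I would proceed in four steps. First, for each $m$ I identify the twisting line bundle $L_m=\det\widetilde{\mo{}}^{\pm}(m)$ in $\mr{Pic}(\mc{B}N)$ and express it in terms of the generator $\gamma$, noting that $L_m$ is essentially trivial when $m$ is even and carries a nontrivial sign twist when $m$ is odd. Second, I apply \Cref{ch3:_Cartan_Sum_Formula_Twisted} to the splitting above, producing an identity of the form
\[ b_t\bigl(\widetilde{\mo{}}^{+}(m)\otimes\widetilde{\mo{}}^{+}(1)\bigr)\;=\;b_t\bigl(\widetilde{\mo{}}^{+}(m+1)\bigr)\cdot b_t\bigl(\widetilde{\mo{}}^{\varepsilon(m)}(m-1)\bigr) \]
in the appropriate twisted $\mr{KW}$-group. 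Third, I evaluate the left-hand side using \Cref{ch3:_Ana_Stab_Op_Lemma}, which rewrites the Borel classes of a tensor product in closed form as a polynomial in the classes $e$, $\tilde e$ and $\gamma$. Fourth, I solve the resulting equation for $b_1(\widetilde{\mo{}}^{+}(m+1))$ in terms of $b_1(\widetilde{\mo{}}^{+}(m-1))$, and then treat separately the cases $m=2n$ and $m=2n+1$, reading off the recurrence relations defining the coefficients $\alpha_{k,n}$ and $\beta_{k,n}$.

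The main obstacle will be the careful bookkeeping of the twisting line bundles. Since each $\widetilde{\mo{}}^{\pm}(m)$ contributes its own determinant twist to the Cartan identity, one must reconcile tensor products of these twists with the target twist on $\widetilde{\mo{}}^{+}(m\pm 1)$ via a precise identification in $\mr{Pic}(\mc{B}N)$. This is also what is responsible for the appearance of the extra $\tilde e$ factor in the even-parity formula: when $m$ is even the twists align only after absorbing an Euler-class factor of $\widetilde{\mo{}}^{+}(2)$ to correct for the parity discrepancy, whereas in the odd case the twists align without such a correction, leaving an expression that is homogeneous of odd degree in $e$. The recurrences for $\alpha_{k,n}$ and $\beta_{k,n}$ then emerge by comparing coefficients of the monomials $e^{2k+1}\gamma^{k}$ (respectively $e^{2k}\gamma^{k}$) on both sides of the resulting identity.
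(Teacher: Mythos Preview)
Your overall strategy---Clebsch--Gordan decomposition, twisted Cartan sum formula, Ananyevskiy's lemma, then induction---is the right one and matches the paper's approach. But there is a genuine gap in how you propose to implement it.

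Ananyevskiy's lemma (\Cref{ch3:_Ana_Stab_Op_Lemma}) computes the Borel classes of a \emph{triple} tensor product $E_1\otimes E_2\otimes E_3$ of rank $2$ bundles; it gives a rank $8$ twisted symplectic bundle and formulas for $b_1,\dots,b_4$. You propose to apply it to the \emph{double} tensor product $\widetilde{\mo{}}^{+}(m)\otimes\widetilde{\mo{}}^{+}(1)$, which is only rank $4$. The lemma as stated does not cover that case, and there is no analogous closed formula available in the paper for the Borel classes of a two-fold tensor. The paper instead uses the triple product $\widetilde{\mo{}}^{+}(m)\otimes\widetilde{\mo{}}^{+}(1)\otimes\widetilde{\mo{}}^{+}(1)$, which splits as $\widetilde{\mo{}}^{+}(m+2)\oplus\widetilde{\mo{}}^{+}(m)^{\oplus 2}\oplus\widetilde{\mo{}}^{+}(m-2)$ for $m>1$. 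Combining the Cartan sum formula with the first formula of \Cref{ch3:_Ana_Stab_Op_Lemma} yields the clean three-term recursion
\[
b_1\bigl(\widetilde{\mo{}}^{+}(m+2)\bigr)=(\gamma e^{2}-2)\,b_1\bigl(\widetilde{\mo{}}^{+}(m)\bigr)-b_1\bigl(\widetilde{\mo{}}^{+}(m-2)\bigr),
\]
which steps by $2$ and therefore never mixes parities. The odd and even cases are then handled as two separate inductions with base cases $b_1(\widetilde{\mo{}}^{+}(1))=e$ and $b_1(\widetilde{\mo{}}^{+}(2))=\tilde e$, and the recurrences for $\alpha_{k,n}$ and $\beta_{k,n}$ are read off by matching coefficients in this recursion.

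Two smaller points. First, you have the determinant twist backwards: $\det\widetilde{\mo{}}^{\pm}(m)$ is trivial for $m$ odd and isomorphic to the generator $\gamma\in\mr{Pic}(\mc BN)$ for $m$ even, not the other way round. Second, your explanation for the factor $\tilde e$ in the even formula is not correct: it is not a twist-correction term inserted to reconcile line bundles, but simply the base case $b_1(\widetilde{\mo{}}^{+}(2))=\tilde e$ propagated through a recursion that preserves parity. Your two-fold tensor would instead relate $b_1(\widetilde{\mo{}}^{+}(m+1))$ to $b_1(\widetilde{\mo{}}^{+}(m-1))$, mixing the odd and even towers, which is why the bookkeeping you anticipate becomes awkward.
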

	 

	\section*{Acknowledgements}

This paper was part of the author's PhD thesis and he would like to thank his supervisor M. Levine for the countless discussions and insights he patiently shared with him: this work could have not been accomplished without his encouragement and inspiration. The author would also like to thank C. Chowdhury for many helpful discussions and the whole ESAGA group at the University of Essen for providing a stimulating working environment.

\blfootnote{The author was supported by the ERC through the project QUADAG.  This work is part of a project that has received funding from the European Research Council (ERC) under the European Union's Horizon 2020 research and innovation programme (grant agreement No. 832833).} \blfootnote{\hspace{-2em}\includegraphics[scale=0.08]{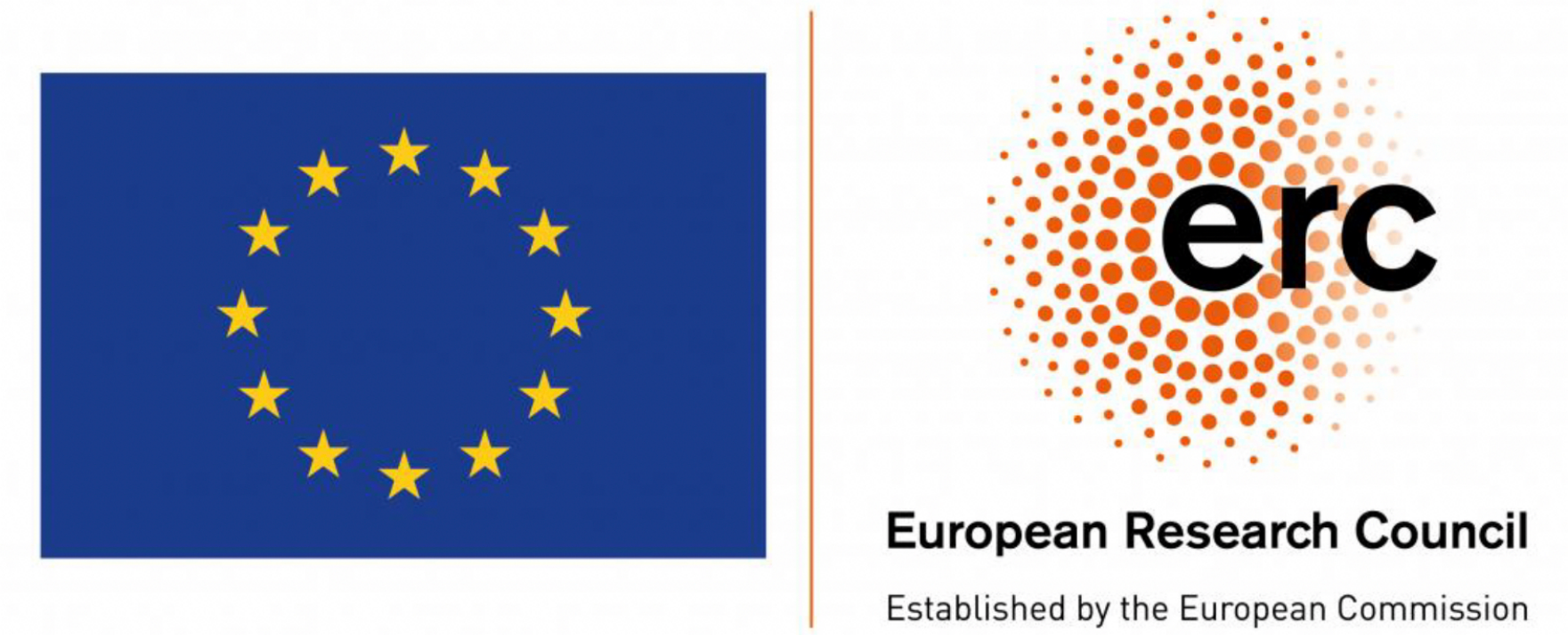}}.\\
	
	\section*{Notations and Conventions}
	
	\begin{enumerate}
		\item The categories $ \catname{Sch}_{\bigslant{}{B}}, \catname{Sch}_{\bigslant{}{B}}^{G} $ will always denote quasi-projective schemes over a base scheme $ B $ of finite Krull dimension, without or with a (left) $ G $-action. If we write $ \catname{Sm}_{\bigslant{}{B}},\catname{Sm}_{\bigslant{}{B}}^{G} $ then we only consider smooth quasi-projective schemes.
		\item We will denote by $ \oocatname{ASt}_{\bigslant{}{S}} $ the $ \infty $-category of algebraic stacks over some base $ S $ (that could be either a scheme or a stack, but for us will always be a scheme). Moreover given $ X \in \catname{Sch}_{\bigslant{}{B}}^{G} $, we will denote the associated quotient stack as $ \left[\bigslant{X}{G}\right] $.
		\item If not specified otherwise, whenever we are working over a field $ \sk $, we will assume it is of characteristic different from $ 2 $. If we work over a general base scheme $ B $, we will always assume $ \dfrac{1}{2} \in \mo{B}^{\times} $.
		\item Recall that a morphism of quasi-projective schemes $ f: X \rightarrow S $ in $ \catname{Sch}_{\bigslant{}{B}} $ is called \textit{lci} (that stands for \textit{local complete intersection}) if there exists a factorization of $ f $ as $ X \stackrel{i}{\into} M \stackrel{p}{\rightarrow} S $ with $ i $ a regular closed immersion and $ p $ a smooth map. In the conventions of \cite{DJK}, these are called \textit{smoothable lci} maps, but we do not need such distinction.
		
		\item If not specified otherwise, $ G $ will always denote a closed sub-group scheme inside $ GL_n $ for some $ n $.
		
		\item Given a group $ S $-scheme $ G $, we will always denote by $ \mf g^{\vee}_S $ the sheaf associated to the co-Lie algebra of $ G $. If the base scheme is clear from the context, we will only write $ \mf g^{\vee} $.
		
		\item In \cite[\S 4]{Morel-Voevodsky}, some ind-schemes were introduced to approximate quotient stacks. For a given algebraic group $ G $, those ind-schemes were denoted in \textit{loc. cit.} as \textit{geometric} classifying spaces $ B_{gm}G $. To distinguish between actual quotient classifying stacks $ [\bigslant{S}{G}]\in \oocatname{ASt}_{\bigslant{}{S}} $, over some base $ S $, and geometric classifying spaces (that are just ind-schemes), we will use a dual notation:
		\[ \mc BG:=\left[\bigslant{S}{G}\right] \in \oocatname{ASt}_{\bigslant{}{S}}\]
		\[  BG:= B_{gm}G \in \mr{Ind}(\catname{Sch}_{\bigslant{}{S}})\]
		We will try to use the calligraphic font $ 	\mc X, \mc Y, \mc BG $, etc., for algebraic stacks.

		\item Given a scheme (or an algebraic stack) $ X $, we will denote its Thomason-Trobaugh K-theory space as $ \mr K(X)=\mr K(\oocatname{Perf}(X)) $, where $ \oocatname{Perf}(X) $ is the infinity category of perfect complexes on $ X $. Given $ X \in \catname{Sch}_{\bigslant{}{B}}^{G} $ with associated quotient stack $ \mc X:= \left[ \bigslant{X}{G} \right] $, the \textit{genuine} equivariant $ \mr K $-theory of $ \mc X $ will be denoted as $ \mr K^G(X):=\mr K(\oocatname{Perf^G}(X))=\mr K(\mc X) $.
		\item Given infinity category $ \oocatname{C} $, we will denote by $ \Map_{\oocatname{C}}(-,-) $ the mapping space of $ \oocatname{C} $. If moreover $ \oocatname{C} $ is closed monoidal, the internal mapping space of $ \oocatname{C} $ we will denoted as $ \iMap_{\oocatname{C}}(-,-) $.
		
		\item When dealing with \textit{bivariant} theories, as known as Borel-Moore homology for us, we will have to take into account \textit{twists} by perfect complexes. For example given a scheme (or an algebraic stack) $ X $ and a perfect complex $ v \in \oocatname{Perf}(X) $, we can define the $ v $-twisted Borel-Moore homology $ \mb E(X,[v] ) $ as done in \cite{DJK}, where $ [v]\in K_0(X)=K_0(\oocatname{Perf}(X)) $. When $ \mc V $ is a locally free sheaf with associated class $ v:=[\mc V]\in K_0(X) $, to the automorphism in $ \mr{SH}(X) $ given by \textit{$ v $-suspension} $ \Sigma^{v} $, it will then correspond the element $ \Th{X}{V}:=\Sigma^{v}\mbbm 1_X $, where $ V:=\mb V_{X}(\mc V):=\spec(\sym^{\bullet}(\mc V^{\vee})) $ is the vector bundle given by $ \mc V $. We will always use calligraphic letters $ \mc V,\mc E $, etc., for perfect complexes and Roman letters for vector bundles.  We are also using calligraphic letters for algebraic stacks, but this should not be source of any confusion since it will be clear from the context to which kind of object we are referring to.

		\item We will stick with the conventions and notation of \cite{DJK} for the general theory of motivic bivariant theories. But from the second section and on, we will use the twisting conventions of \cite{Motivic_Euler_Char}, \cite{VLF_Levine}, \cite{Levine_Atiyah-Bott} that clashes with the one of \cite{DJK} when we  twist by line bundles: what in the former three papers  is denoted $ \mb E^{a,b}\left( X; L \right) $ is actually $ \mb E^{a+2,b+1}\left( \Th{X}{L} \right) $, that in the notation of the latter one is $ \mb E^{a+2,b+1}\left( X; -[\mc L] \right) $ where $ \mc L $ is the invertible sheaf corresponding to the line bundle $ L $. More precisely we have:
		
		\[  \mb E^{a,b}_{\mr{Levine}}\left( X; L \right):=\mb E^{a+2,b+1}\left( \Th{X}{L} \right) =: \mb E^{a+2,b+1}_{\mr{DJK}}\left( X,  -[\mc L] \right)  \]
		\noindent where the double index conventions on the right hand side are defined in \cref{ch1:_DJK_classical_recap}.
		Although slightly confusing, both notation have their pros and cons. It should be clear  what we are using from the context and from the fact that we will always use the semi-colon for one and just a comma for the other (and square brackets if we want to stress that we are considering K-theoretic classes). The rule of the thumb should be: if we are working with cohomology theories and if it is a twist by a line bundle (and not by an invertible sheaf), we are using the $ \mb E_{\mr{Levine}}(-;-) $-convention, otherwise we are following \cite{DJK}.
		
	\end{enumerate}

\section{Recollections}
\subsection{Geometric Approximations}
\label{sec.2:_Factorization_U_j}
We will now work over a base scheme $ S $ and we will denote by $ GL_n $ the group scheme $ GL_{n,S}:=GL_n\times_{\Z} S $ defined over $ S $, where $ GL_n $ is the usual group scheme of invertible $ (n\times n )$-matrices defined over $ \spec(\Z) $. Whenever we will encounter a group scheme $ G $ over $ S $, we will always assume that $ G $ is smooth.\\
We describe a version of the construction of the classifying space of $ G $, found in \cite[\S 4.2]{Morel-Voevodsky}. We can consider  $ V_m=\A^{n(n+m)}_{S}\simeq \Hom(\A^{n+m}_{S}, \A^{n}_{S}) $ equipped with the natural (left) $ GL_n $-action (hence we also have an induced natural $ G $-action).  Once we identify $ V_m $ with the scheme of $ (n,n+m) $-matrices, we can restrict to the open subset $ {E}_mG\sseq V_m $ made of those matrix with rank $ n $. On $ {E}_mG $ we have a free $ GL_n $-action (hence a free $ G $-action too) . We can  define a map $ s_m: {E}_mG \into E_{m+1}G $ sending an element $ B \in  {E}_mG $ to a matrix of the form:

\[ \left(\begin{array}{ccc|c}
	& & &0\\
	& B & &\vdots \\
	& & & 0 \\
\end{array}\right) \]

\begin{notation}\label{ch1:_Notation_quot_ind}
	Since we will closely follow  \cite{Levine_Atiyah-Bott} and \cite{VLF_Levine}, we will adapt the same notation too.  Choose as a base point   $ x_0=(I_n, 0_n, \ldots, 0_n) $ where $ I_n $ is the $ (n\times n) $ identity matrix and $ 0_n $ is the zero vector column of length $ n $.  We will denote by  $ {E}SL_n={E}GL_n $ the presheaf on $ \catname{Sm}_{S} $ given by $ \colim{m} \ {E}_mSL_n $. For any closed subgroup $ G $ of $ GL_n $ or $ SL_n $, we will denote the quotient $ {B}_mG:=\bigslant{{E}_mGL_n}{G} $ whose limit gives us the approximation $ {B}G:=\colim{m} {B}_mG $ for the quotient stack $ \mc BG:=\left[ \bigslant{S}{G} \right] $. For $ G=GL_n $, the spaces $ {B}_mGL_n\simeq \mr{Gr}_S(n, n+j) $ are represented by the Grassmannians. 
\end{notation}

\subsection*{The Normaliser of the Torus in $SL_2$}

Let $ N $ be the normaliser of the standard diagonal torus $ T\sseq SL_2 $. Note that $ T\simeq \mb G_m $, where for $ R $ a ring, we map $ t \in \mb G_m(R)= R^{\times} $ to the diagonal matrix:
\[ \left( \begin{matrix}
	t & 0 \\
	0 & t^{-1}
\end{matrix} \right) \]
We often simply write this matrix as $ t $, when there is no cause of confusion.
Notice that $ N $ is generated by $ T $ plus the element:
\[  \sigma:=\left(\begin{matrix*}[c]
	0 &1 \\
	-1 & 0
\end{matrix*}\right) \]

Recall from \cite{Witt_Loc_PhD} the following:

\begin{lemma}
	Let $ \sk $ be a field. The Picard group $ \mr{Pic}(\mc BN) $ of $ \mc BN \in \oocatname{ASt}_{\bigslant{}{\sk}} $ is generated by the line bundle $ \gamma_N \in \mr{Pic}^{SL_2}(\pro^2\setminus C)\simeq \mr{Pic}(\mc BN)  $ coming from $ \mo{\pro^2}(1) $, with its natural $ SL_2 $-linearisation. Moreover $ \mr{Pic}(\mc BN)\simeq \bigslant{\Z}{2\Z} $.
\end{lemma}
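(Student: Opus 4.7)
The plan is to translate the problem into the computation of characters of $N$, and then read off the answer from the extension $1 \to T \to N \to \Z/2 \to 1$ together with the standard $SL_2$-action on $\sym^2 F$.

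First I would invoke the classical identification $\mr{Pic}(\mc BG) \simeq X^*(G) := \Hom(G, \mb G_m)$ valid for any smooth affine group scheme $G$ over a field $\sk$: a line bundle on $\mc BG$ is a $G$-equivariant line bundle on $\spec(\sk)$, and since $\mr{Pic}(\spec \sk) = 0$ the underlying bundle is trivial, so a $G$-linearisation on $\mo{\spec \sk}$ is precisely a character $G \to \mb G_m$. Applied to $G = N$, this reduces the lemma to showing $X^*(N) \simeq \Z/2\Z$ with generator corresponding to $\gamma_N$.

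Next I would exploit the short exact sequence $1 \to T \to N \to \Z/2 \to 1$ recalled in the text. Restriction along $T \hookrightarrow N$ and pullback along $N \twoheadrightarrow \Z/2$ produce a left-exact sequence
\[
0 \longrightarrow X^*(\Z/2) \longrightarrow X^*(N) \longrightarrow X^*(T)^{\Z/2},
\]
where the action of $\Z/2$ on $X^*(T)$ is induced by conjugation by $\sigma$. A direct $2\times 2$ matrix computation shows $\sigma\, t\, \sigma^{-1} = t^{-1}$, hence the $\Z/2$-action on $X^*(T) \simeq \Z$ is multiplication by $-1$, and therefore $X^*(T)^{\Z/2} = 0$. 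On the other hand, $X^*(\Z/2) = \Hom(\Z/2, \mb G_m) = \mu_2(\sk) \simeq \Z/2$ because $\mathrm{char}(\sk)\neq 2$. This already yields $\mr{Pic}(\mc BN) \simeq \Z/2\Z$.

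Finally, to identify $\gamma_N$ with the nontrivial character, I would use the description $SL_2/N \simeq \pro^2\setminus C$ realised at the base point $p_0 := [xy] \in \pro(\sym^2 F)$, whose $SL_2$-stabiliser is exactly $N$. The fibre of $\mo{\pro^2}(1)$ over $p_0$ is canonically the line $\sk \cdot xy \subset \sym^2 F$; the diagonal torus acts trivially on $xy$ (since $t \cdot x = tx$, $t \cdot y = t^{-1}y$), while $\sigma$ sends $xy \mapsto -xy$. Therefore $\gamma_N$ corresponds to the character $\rho^-\colon N \to \mb G_m$ that is trivial on $T$ and equal to $-1$ on $\sigma$, namely the nontrivial element of $\mu_2(\sk)$ produced above. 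Hence $\gamma_N$ generates $\mr{Pic}(\mc BN)$.

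The main subtlety, and the step that requires the most care, is justifying the five-term sequence at the level of algebraic (not merely abstract) homomorphisms: one must check that pullback $X^*(\Z/2) \to X^*(N)$ is injective (which follows from $N \to \Z/2$ being a surjection of fppf sheaves of groups) and that a character of $N$ trivial on $T$ really factors through $\Z/2$ scheme-theoretically. Once this is in place, the remainder is the concrete fibre computation of the $N$-action on $\sk \cdot xy$.
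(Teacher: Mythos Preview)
Your proof is correct and follows essentially the same route as the paper: both identify $\mr{Pic}(\mc BN)$ with the character group $X^*(N)$ and use the conjugation relation $\sigma t\sigma^{-1}=t^{-1}$ to force any character to be trivial on $T$, hence to factor through $\Z/2$. The only minor difference is in pinning down the generator: the paper argues that $\gamma_N$ is nontrivial because $\mo{}(1)$ admits no nonvanishing section on $\pro^2\setminus C$ (while $Q$ trivialises $\mo{}(2)$), whereas you compute the $N$-action on the fibre at $[xy]$ directly---note that the fibre of $\mo{\pro^2}(1)$ there is the dual line $(\sk\cdot xy)^\vee$ rather than $\sk\cdot xy$, but since $\rho^-$ has order two this does not affect your conclusion.
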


\begin{rmk}
	We will always identify the classifying space $\mc BN$ with its geometric approximation $ BN $ by \cite[Proposition 3.33]{Motivic_Vistoli}, so with some abuse of notation we will still denote by $\gamma$ the corresponding system of line bundles on $ BN $.
\end{rmk}

\begin{rmk}
	The representation $ \rho^{-}: N \rightarrow \mb G_m $ sending $ \sigma $ to $ -1 $ corresponds exactly to the line bundle $ \gamma_N$ generating $ Pic(\mc BN) $. 
\end{rmk}

\subsection{BM Motives and Operations on Algebraic Stacks}

We will freely use the six functor formalism developed for algebraic stacks in \cite{ChoDA24}. A summary is given by the following:

\begin{thm}[{\cite[Theorem 4.26]{ChoDA24}}]
	We have a functor:
	\[ \mr{SH}^*_!: Corr\left( \oocatname{ASt} \right)_{lft,all} \longrightarrow \oocatname{Pr}_{stb}^{\mr L} \]
	\noindent extending the analogous functor defined on schemes. This functor encodes the following data:
	
	\begin{enumerate}
		\item For every $ \mc X $ NL-stack, we have the tensor- hom adjunction in $ \mr{SH}(\mc X) $:
		\[ -\otimes - \dashv \iMap_{\mr{SH}(\mc X)}(-,-) \]
		\item For any map $ f: \mc X \longrightarrow \mc Y $ in $ \oocatname{ASt} $, we have a pair of adjoint functors:
		\[ f^*\dashv f^* \]
		\noindent and if $ f $ is smooth we also have:
		\[ f_{\#}\dashv f^* \dashv f_* \]
		\item For $ f: \mc X \longrightarrow \mc Y $ a lft map in $ \oocatname{ASt} $, we have another pair of adjoint functors:
		\[ f_!\dashv f^! \]
		\item Given a cartesian diagram in $ \oocatname{ASt}$: 
		\begin{center}
			\begin{tikzpicture}[baseline={(0,-1)}, scale=1.25]
				\node (a) at (0,1) {$ \mc W $};
				\node (b) at (1, 1) {$ \mc Y $};
				\node (c)  at (0,0) {$  \mc Z$};
				\node (d) at (1,0) {$ \mc X $};
				\node (e) at (0.2,0.75) {$ \ulcorner $};
				\node (f) at (0.5,0.5) {$ \Delta $};

				\path[font=\scriptsize,>= angle 90]
				
				(a) edge [->] node [above ] {$ g $} (b)
				(a) edge [->] node [left] {$ q $} (c)
				(b) edge[->] node [right] {$ p $} (d)
				(c) edge [->] node [below] {$ f $} (d);
			\end{tikzpicture}
		\end{center}
		\noindent where $p$ is lft, we have the base change equivalence:
		\[ p_!f^*\overset{Ex^*_!}{\simeq} g^*q_! \]
		\item For any lft map $f$, the projection formula holds:
		\[ f_!(-)\otimes - \simeq f_!( - \otimes f^*(-))  \]
		\item For any smooth map $f$, the smooth projection formula holds:
		\[ f\epfs(-)\otimes - \simeq f\epfs( - \otimes f^*(-))  \]
	\end{enumerate}
	These functors satisfy the usual compatibilities given by exchange transformations:
	$$ Ex^*_!,Ex^!_*, Ex^*_{\#}, Ex^*_*,Ex_{\# *}, Ex_{!*}, Ex_{!\#}, Ex^{*!} $$
	\noindent and they give rise to localisation fiber sequences like in \cite[Proposition 4.2.1]{Chowdhury24}. Moreover, for a given smooth map $f$, we have the following natural purity equivalence:
	\[ f\epfs \longrightarrow f_!\Sigma^{\mb L_f} \]
\end{thm}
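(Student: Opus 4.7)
The plan is to build the functor $\mr{SH}^*_!$ in two stages: first extend the $*$-part $\mr{SH}^*$ from schemes to algebraic stacks by smooth descent, then glue in the $!$-part using the $(\infty,2)$-categorical correspondence machinery of Gaitsgory--Rozenblyum / Liu--Zheng, which from partial adjoints and a Beck--Chevalley condition automatically produces a functor out of $Corr(\oocatname{ASt})_{lft,all}$ with all the exchange transformations and projection formulas built in. The starting point is the known six-functor formalism on $\catname{Sch}$, together with the fact that $\mr{SH}$ already satisfies Nisnevich (indeed smooth and cdh) descent on schemes.

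First, I would define $\mr{SH}(\mc X)$ for $\mc X \in \oocatname{ASt}$ by right Kan extension along a smooth atlas: if $p \from X \to \mc X$ is a smooth surjection with \v{C}ech nerve $X_\bullet$, set $\mr{SH}(\mc X) := \limit{[n]\in\Delta} \mr{SH}(X_n)$. Smooth descent on schemes ensures independence of the atlas, and functoriality in arbitrary morphisms of stacks is forced by the universal property. This gives the $*$-adjunction $f^* \dashv f_*$ for any $f$ in $\oocatname{ASt}$, and also $f\epfs \dashv f^*$ when $f$ is smooth (since $f\epfs$ descends through the atlas: a smooth map of stacks is presented by a smooth map of simplicial schemes, and $f\epfs$ commutes with smooth pullback on schemes). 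The tensor--hom adjunction and the smooth projection formula are then inherited termwise from the scheme case.

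Second, I would construct $f_!$ for lft morphisms. For representable lft maps, $f_!$ descends directly from schemes, using that $f_!$ on schemes commutes with smooth base change (so it glues along the \v{C}ech nerve of a smooth atlas of the target). For non-representable lft morphisms the key device is to factor locally as a composition of a smooth surjection, for which one forces $f_! := f\epfs \Sigma^{-\mb L_f}$ by the purity prescription, and a representable lft morphism, and then to verify this is independent of the factorization. The most efficient way to package all coherences is to invoke the Liu--Zheng extension theorem: given the $*$-theory on $\oocatname{ASt}$, the subcategory of smooth morphisms admitting a left adjoint, the subcategory of lft morphisms admitting a (right adjoint to) $!$-pullback, and the Beck--Chevalley square for cartesian diagrams with one leg smooth, one gets a functor out of $Corr(\oocatname{ASt})_{lft,all}$ with all compatibilities. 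The exchange transformations $Ex^*_!, Ex^!_*, Ex^*_\#$, etc., and the localization fiber sequences then follow formally as in \cite[Proposition 4.2.1]{Chowdhury24}.

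The main obstacle is verifying the Beck--Chevalley condition across \emph{non-representable} morphisms: namely that $Ex^*_!$ is an equivalence for an arbitrary cartesian square in $\oocatname{ASt}$ with one leg lft. Once one has representable $!$ and base change against smooth morphisms, the general case reduces to showing that the construction of $f_!$ via an atlas factorization is functorial and satisfies smooth descent on the source. This in turn relies on the purity equivalence $f\epfs \simeq f_! \Sigma^{\mb L_f}$ for smooth $f$, which must be established first on schemes (already available through $DJK$-style fundamental classes) and then propagated to stacks along the atlas, exploiting compatibility of the cotangent complex $\mb L_f$ with smooth base change. Establishing this purity equivalence coherently, and checking that it interacts correctly with the Liu--Zheng gluing data, is where the bulk of the technical work lies; everything else (tensor--hom, projection formulas, smooth and proper base change as special cases) follows formally from the correspondence-category packaging.
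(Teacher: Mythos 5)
The paper does not actually prove this statement: it is quoted essentially verbatim from \cite[Theorem 4.26]{ChoDA24} (with the localisation sequences referred back to \cite[Proposition 4.2.1]{Chowdhury24}) and is used as a black box throughout, so there is no in-paper argument to compare yours against. On its own terms, your sketch follows the same architecture as the cited construction: extend the $*$-theory to stacks by descent along an atlas, build $f_!$ by factorisation plus purity, and package all coherences via the correspondence-category machinery of Gaitsgory--Rozenblyum/Liu--Zheng.

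There is, however, a genuine gap at your very first step. $\mr{SH}^*$ does \emph{not} satisfy smooth descent on schemes, so defining $\mr{SH}(\mc X):=\limit{[n]\in\Delta}\mr{SH}(X_n)$ over the \v{C}ech nerve of an \emph{arbitrary} smooth atlas is neither independent of the atlas nor the intended category: it produces the lisse-extended (Borel-type) theory, which for instance does not recover genuine equivariant constructions and would break the comparison with geometric classifying spaces via \cite[Proposition 3.33]{Motivic_Vistoli} that this paper relies on. This is exactly why the theorem is restricted to NL-stacks: the construction uses \emph{Nisnevich-local} atlases, i.e.\ smooth representable surjections admitting sections Nisnevich-locally, for which the \v{C}ech limit is forced by Nisnevich descent alone (Nisnevich-locally split morphisms are of universal descent for any Nisnevich sheaf of categories), and one then checks that any two NL-atlases give equivalent categories and that $p^*$ is conservative for such an atlas. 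Your later steps --- gluing $f_!$ for representable lft maps from the scheme case via base change, setting $f_!\simeq f\epfs\Sigma^{-\mb L_f}$ for smooth non-representable maps and verifying independence of the factorisation, and invoking the Liu--Zheng extension to obtain the functor out of $Corr(\oocatname{ASt})_{lft,all}$ --- are consistent with how the cited argument actually proceeds, but they all presuppose that this first descent step is set up with NL-atlases rather than arbitrary smooth ones.
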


\begin{defn}
	The adjoint equivalences:
	\[ \Sigma^{\mc E}:=p\epfs s_*: \mr{SH}(\mc X)\leftrightarrows \mr{SH}(\mc X): s^!p^*=:\Sigma^{-\mc E} \]
	\noindent are called \textit{Thom transformations}. We will denote by:
	\[ \Th{\mc X}{E}:=\Sigma^{\mc E}\mbbm 1_{\mc X} \in \mr{Pic}(\mr{SH}(\mc X)) \]
	\noindent the \textit{Thom space} of $ E $, with inverse $ \Sigma^{-\mc E}\mbbm 1_{\mc X} $.
\end{defn}

\begin{notation}
	Let:
	\[ J_{Bor}: \mr K \longrightarrow \mr{Pic}(\mr{SH}^{\triangleleft}) \]
	\noindent be the \textit{Borel J-homomorphism} as defined in \cite[\S 4.1]{ChoDA24}. For a given NL-stack $ \mc X $ and a given $ v \in \mr K_0(\mc X) $, we will denote the associated automorphism of $ \mr{SH}(\mc X) $ as $ \Sigma^{v} $, with inverse $ \Sigma^{-v} $. 
\end{notation}

\begin{defn}\label{ch1:_BM_Mot_and_thy_lci_ind_schemes}
	Let $ g: \mc X \rightarrow \mc B $ be a lft map of NL-stacks and $ w \in \mr K_0(\mc X) $ and let $ \mb E \in \mr{SH}(\mc B) $.
	
	\begin{enumerate}
		\item [{$\left( \begin{array}{c}
				\mr{\textit{BM}}\\
				\mr{\textit{Motive}}
			\end{array} \right)$}]  The twisted Borel-Moore motive over $ \mc B $ is defined as:
		\[ \left( \bigslant{\mc X}{\mc B} \right)^{\mr{BM}}(w):= g_! \Sigma^{w}  \mbbm 1_{\mc X} \]

		\item [{$\left( \begin{array}{c}
				\mr{\textit{BM}}\\
				\mr{\textit{Homology}}
			\end{array} \right)$}]  The twisted Borel-Moore homology over $ \mc B $ is defined as:
		\[ \mb E^{\mr{BM}}\left(\bigslant{\mc X}{\mc B}, w\right):=\iMap_{\mr{SH}(\mc B)}\left( \left( \bigslant{\mc X}{\mc B} \right)^{\mr{BM}}(w), \mb E \right) \]
		\noindent The BM-homology groups will be then defined as:
		\[  \mb E^{\mr{BM}}_{a,b}\left(\bigslant{\mc X}{\mc B}, w\right):=\pi_0\left( \Sigma^{-a,-b} \mb E^{\mr{BM}}\left(\bigslant{\mc X}{\mc B}, w\right) \right) \]
		
		\item [{$\left( \begin{array}{c}
				\mr{\textit{Generalised}}\\
				\mr{\textit{Cohomology}}
			\end{array} \right)$}]  The twisted  cohomology of $ \mc X $ is defined as:
		\[ \mb E\left( \mc X, w \right):=\iMap_{\mr{SH}(\mc B)}\left( \mbbm 1_{\mc B}, g_*\Sigma^{w}g^*\mb E  \right)\]
		\noindent and its twisted cohomology groups as:
		\[  \mb E^{a,b}\left( \mc X, w \right):=\pi_0\left( \Sigma^{a,b}  \mb E\left( \mc X, w \right) \right) \]
		
	\end{enumerate}
\end{defn}

As for the case of schemes (cf. \cite{DJK}), we can talk about smooth pushforwards, proper pullbacks and Gysin maps for maps of algebraic stacks, and associated operations in BM-homology and cohomology. 

\begin{defn}\label{ch1:_def_equiv_op}
	Let $ \pi_{\mc X}: \mc X \rightarrow \mc B $ and $ \pi_{\mc Y}: \mc Y \rightarrow \mc B $ be lft maps in  $ \oocatname{ASt}_{\bigslant{}{B}} $ and let $ f: \mc X \rightarrow \mc Y $ be a map between them. 
	
	\begin{enumerate}
		\item [{$\left( \begin{array}{c}
				\mr{\textit{SPf}}\\
			\end{array} \right)$}] If $ f $ is smooth and $ v \in \mr K_0(\mc Y) $, then we have a smooth pushforward map between BM-motives:
		\[ f_!: \left(  \bigslant{\mc X}{\mc B}\right)^{\mr{BM}}(v+ \mb L_f) \longrightarrow \left(  \bigslant{\mc Y}{\mc B}\right)^{\mr{BM}}(v)  \]
		\noindent induced by $ f_!\Sigma^{\mb L_f}f^*\stackrel{\mf p_f}{\simeq}  f_{\#}f^* \stackrel{\eta_{\#}^{*}(f)}{\longrightarrow} Id $.
		
		\item [{$\left( \begin{array}{c}
				\mr{\textit{PPb}}\\
			\end{array} \right)$}] If $ f $ is representable and proper, then we have a proper pullback map between BM-motives:
		\[ f^*:\left(  \bigslant{\mc Y}{\mc B}\right)^{\mr{BM}}(v) \longrightarrow \left(  \bigslant{\mc X}{\mc B}\right)^{\mr{BM}}(v)  \]

		\item [{$\left( \begin{array}{c}
				\mr{\textit{GPf}}\\
			\end{array} \right)$}] If $ f $ is smooth and it admits a section $ s: \mc Y \rightarrow \mc X $, then we have a natural transformation:
		\[ Id\simeq (f\circ s)_! (f\circ s)^!=f_!s_!s^!f^! \stackrel{\eta_!^!(s)}{\longrightarrow} f_! f^!\simeq f_!\Sigma^{\mb L_f} f^* \]
		This natural transformation induces then a \textit{Gysin pushforward} on BM-motives:
		\[ s_!: \left(  \bigslant{\mc Y}{\mc B}\right)^{\mr{BM}}(v) \longrightarrow \left(  \bigslant{\mc X}{\mc B}\right)^{\mr{BM}}(v+\mb L_f)   \]
		
		When $ f $ is a vector bundle, then by homotopy invariance the smooth pushforward $ f_! $ is an isomorphism on BM-motives with inverse given by the Gysin  pushforward $ s_! $ (the same argument in \cite[Lemma 2.2]{levine2017intrinsic} works verbatim).
	\end{enumerate}
	
	The operations we just defined on BM-motives will respectively induce smooth pullbacks, proper pushforwards and Gysin pullbacks on BM-homology as in the case of schemes. 
\end{defn}



\subsection{$ SL $- and $ SL_{\eta}$-Orientations }\label{ch2:_SL_orientations}
From now on in this chapter, we will assume our base field $ \sk $ to be perfect. We will denote by $ S $ our general base scheme. We have a very special element in $ \mr H^{-1,-1}(S) $, the algebraic Hopf map:
\[ \eta: \A^2_S\setminus \set{0} \longrightarrow \pro^1_S \]
\noindent sending $ (x,y)\mapsto [x:y] $, giving us an element $ \eta: \Sigma_{\mb G_m}\mbbm 1_{S} \rightarrow \mbbm 1_{S} \in \mr{H}^{-1,-1}(S)  $. For any motivic ring spectrum $ \mb E $, via the unit map $ \mu: \mbbm 1_{S} \rightarrow \mb E $, we get a corresponding element $ \eta_{\mb E} \in \mb E^{-1,-1}(S) $. 

\begin{defn}
A motivic ring spectrum $ \mb E \in \mr{SH}(S) $ is said to be $ \eta $-invertible if  multiplication by $ \eta_{\mb E} $, $ -\times \eta_{\mb E}: \mb E^{0,0}(S)\rightarrow \mb E^{-1,-1}(S) $, is an isomorphism.
\end{defn}

\begin{defn}[{\cite[Def.1]{Ananyevskiy_PhD_Thesis}}]
An $ SL_n $-vector bundle $ (E, \theta) $ on some $ X \in \catname{Sch}_{\bigslant{}{S}} $ is the data given by a vector bundle $ E $ of rank $ n $, together with a trivialization of the determinant, $ \theta: \det(E)\stackrel{\sim}{\rightarrow} \mo{X} $. We will often denote the $ SL_n $-vector bundle by just the underlying vector bundle $ E $. If there is no need to specify the rank of the bundle, we will say that $ E $ is a $ SL $-vector bundle.
\end{defn}

After Panin-Walter, we will use the following definition:
\begin{defn}[{cf. \cite{Ananyevskiy_SL_oriented}}]\label{ch2:_SL_orientation_def}
Let $ \catname{C} $ be a full subcategory of $ \catname{Sch}_{\bigslant{}{S}} $. Given a ring spectrum $ \mb E \in \mr{SH}(S) $, an \textit{SL-orientation with respect to} $ \catname{C} $ for $ \mb E $ is a rule which assigns to each $ SL_n $-vector bundle $ V $, over $ X \in \catname{C} $, an element:
\[ \mr{th}(V) \in \mb A^{2n,n}(\Th{X}{V}) \]
\noindent with the following properties:
\begin{enumerate}
	\item For any isomorphism $ \varphi: V_1\rightarrow V_2 $ of $ SL $-vector bundles over $ X \in \catname{C} $, we have:
	\[ \varphi^*\mr{th}(V_2)=\mr{th}(V_1) \]
	\noindent where $ \varphi^* $ is the pullback map induced by $ \varphi $.
	\item For any morphism $ f: X \rightarrow Y $ in $ \catname{C} $, and  $ V $ an $ SL $-vector bundle over $ Y $, we have:
	\[ f^*\mr{th}(V) =\mr{th}(f^*V)\]
	\item For $ \mc V_1, V_2 $  $ SL $-vector bundles on some $ X\in \catname{C} $, we have:
	\[ \mr{th}(V_1\oplus V_2)=p_1^*\mr{th}(V_1)\cup p_2^*\mr{th}(V_2) \]
	\noindent where $ p_i: V_1 \oplus V_2 \rightarrow V_i $ are the projection maps.
	\item We have:
	\[ \mr{th}(\A^1_S)=\Sigma_{T}1\simeq[ \Sigma_{T} \mbbm 1_S \stackrel{\Sigma_T u_{\mb E}}{\longrightarrow } \Sigma_{T} \mb E] \in \mb E^{2,1}(\pro^1_S) \]
	\noindent where $ u_{\mb E}: \mbbm 1_S \rightarrow \mb E $ is the unit map of the ring spectrum.
\end{enumerate}
We refer to the elements $ \mr{th}(V) $ as \textit{Thom classes}. If a ring spectrum $ \mb E $ has a normalised $ SL $-orientation with respect to $ \catname{C}:=\catname{Sm}_{\bigslant{}{S}} $, we simply say that $ \mb E $ has an $ SL $-orientation, and we will say that $ \mb E $ is $ SL $-oriented. If $ \catname{C}:=\catname{Sch}_{\bigslant{}{S}} $, then a normalised $ SL $-orientation with respect to $ \catname{C} $ will be called an \textit{absolute $ SL $-orientation}, and $ \mb E $ will be said to be absolutely $ SL $-oriented (following the conventions in \cite{Déglise_Fasel_Borel_Char}).

\end{defn}

\begin{rmk}
We will consider basically just absolute $ SL $-orientations. What will follow is already well known to the experts and it will be very similar to the material already presented in \cite[\S 4.3]{Bachmann_Wickelgren}. We only need to concentrate on $ SL $-oriented spectra and, by \cite[Theorem 4.7]{Ananyevskiy_Witt_MSp_Reations}, we know that those are strongly $ SL $-oriented in the sense of \cite{Bachmann_Wickelgren}, so the reader can safely refer to the latter if they prefer.
\end{rmk}

\begin{notation}\label{ch2:_SL_eta_oriented_def-notation}
Given an $ SL $-oriented spectrum $ \mr A \in \mr{SH}(S) $ that is also $ \eta $-invertible, we will say for short that $ \mr A $ is $ SL_{\eta} $-oriented. We will use the letter $ \mr A $ whenever we want to stress the fact that we are working with $ SL_{\eta} $-oriented spectra.
\end{notation}

\begin{rmk}
Following the conventions of \cite[Def.19]{Ananyevskiy_PhD_Thesis}, any $ \eta $-invertible spectrum $ \mr A $ will be regarded  just as a graded theory through the isomorphisms $ \mr A^{\bullet}:=\mr A^{a-b,0}\simeq  \mr A^{a,b}$ induced by $ \eta $. 
\end{rmk}

\begin{defn}\label{ch2:_Def_Thom_iso}
Let $ \catname{C} $ be a full subcategory of $ \catname{Sch}_{\bigslant{}{S}} $ and let $ \mb E \in \mr{SH}(S) $ be a motivic ring spectrum. A \textit{system of SL-Thom isomorphism} for $ \mb E $ (over $ \catname{C} $) is the data given by a collection of isomorphism $ \tau_V:  \mb E^{\bullet,\bullet}(X) \stackrel{\sim}{\rightarrow} \mb E^{\bullet+2n,\bullet+n}(\Th{X}{V}) $,  for $ X\in \catname{C} $ and $ V $ $ SL_n $-vector bundle on $ X $, such that:
\begin{enumerate}
	\item Given a map $ f: X \rightarrow Y $ in $ \catname{C} $ and $ V $ an $ SL_n $-vector bundle on $ X $, we have the following commutative diagram:
	\begin{center}
		\begin{tikzpicture}[baseline={(0,0)}, scale=2]
			\node (a) at (0,1) {$ \mb E^{\bullet,\bullet}(Y) $};
			\node (b) at (1.75, 1) {$ \mb E^{\bullet+2n,\bullet+n}\left( \Th{Y}{f^*V} \right) $};
			\node (c)  at (0,0) {$ \mb E^{\bullet,\bullet}(X)   $};
			\node (d) at (1.75,0) {$  \mb E^{\bullet+2n,\bullet+n}\left( \Th{X}{V} \right) $};
			\node (e) at (0.535,0.92) {$ \sim $};
			\node (f) at (0.54,0.05) {$ \sim $};

			\path[font=\scriptsize,>= angle 90]
			
			(a) edge [->] node [above ] {$ \tau_{f^*V} $} (b)
			(a) edge [->] node [left] {$  $} (c)
			(b) edge[->] node [right] {$  $} (d)
			(c) edge [->] node [below] {$ \tau_V $} (d);
		\end{tikzpicture}
	\end{center}
	\noindent where the vertical arrows are induced by the pullback on cohomology.
	\item Given an isomorphism $ \varphi: V \stackrel{\sim}{\rightarrow} W $ of $ SL_n $-vector bundles on $ X\in \catname{C} $, we get a commutative diagram: 
	\begin{center}
		\begin{tikzpicture}[baseline={(0,0)}, scale=1.5]
			\node (a) at (-1,0.3) {$ \mb E^{\bullet,\bullet}(X) $};
			\node (b) at (1.5, 0.6) {$ \mb E^{\bullet+2n,\bullet+n}\left( \Th{X}{V} \right) $};
			\node (d) at (1.5,0) {$  \mb E^{\bullet+2n,\bullet+n}\left( \Th{X}{W} \right) $};
			\node (e) at (1.35,0.3) {\scriptsize$ \varphi^* $};
			\node (f) at (0.5,0.5) {$ $};

			\path[font=\scriptsize,>= angle 90]
			
			(a) edge [->] node [above ] {$ \tau_{V} $} (b.west)
			(a) edge [->] node [below] {$ \tau_W $} (d.west)
			(b) edge[] node [right] {\hspace{-0.25em}\rotatebox{90}{$ \sim $}} (d);
		\end{tikzpicture}
	\end{center}
	\item  Given $ V_1, V_2 $ $ SL $-vector bundles of rank $ j,k $ over $ X \in \catname{C} $, the Thom isomorphism are \textit{multiplicative}, that is, we have the following commutative diagram:
	\begin{center}
		\begin{tikzpicture}[baseline={(0,0)}, scale=2]
			\node (a) at (0,1) {$ \mb E^{\bullet,\bullet}(X) \times \mb E^{\bullet,\bullet}(X)  $};
			\node (b) at (3.75, 1) {$ \mb E^{\bullet+2j,\bullet+j}\left( \Th{X}{V_1} \right)\times  \mb E^{\bullet+2k,\bullet+k}\left( \Th{X}{V_2} \right)  $};
			\node (c)  at (0,0) {$ \mb E^{\bullet,\bullet}(X)   $};
			\node (d) at (3.75,0) {$  \mb E^{\bullet+2(j+k),\bullet+j+k}\left( \Th{X}{V_1\oplus V_2} \right) $};
			\node (e) at (1.35,0.92) {$ \sim $};
			\node (f) at (1.35,0.05) {$ \sim $};

			\path[font=\scriptsize,>= angle 90]
			
			(a) edge [->] node [above ] {$ \tau_{V_1}\times \tau_{V_2} $} (b)
			(a) edge [->] node [left] {$  $} (c)
			(b) edge[->] node [right] {$  $} (d)
			(c) edge [->] node [below] {$ \tau_{V_1\oplus V_2} $} (d);
		\end{tikzpicture}
	\end{center}
	\noindent where the vertical arrows are induced by multiplication map of $ \mb E $, together with the identification $ \Th{X}{V_1}\otimes \Th{X}{V_2}\simeq \Th{X}{V_1\oplus V_2} $.

	\item If $ V $ is an $ SL_n $-vector bundle on $ X\in \catname{C} $ isomorphic to the trivial $ SL_n $-vector bundle $ \A^n_X $, then we have that $ \tau_{V}\simeq \Sigma^{2n,n} $.
	
\end{enumerate}
\end{defn}
\begin{rmk}\label{ch2:_rmk_Thom_iso}
If we had a collection of maps:
\[ \set{\tau_{V}:  \mb E^{\bullet,\bullet}(X) \stackrel{}{\rightarrow} \mb E^{\bullet+2n,\bullet+n}(\Th{X}{V}) }_{X \in \catname{C}}  \]
\noindent satisfying (1),(2), and (4) in the previous proposition, then we automatically get that $ \tau_V $ are isomorphism by a Mayer-Vietoris argument (cf. \cite[Lemma 3.7]{Ananyevskiy_Thom_iso}).
\end{rmk}

\begin{rmk}
Notice that working with a special group like $ SL $, giving Thom classes $ \mr{th}(V) $, for $ SL $-vector bundles $ V $ over $ X \in \catname{C} $, amounts to the same data as giving a system of Thom isomorphisms:
\[ \mb E^{\bullet,\bullet}(X) \stackrel{\sim}{\rightarrow} \mb E^{\bullet+2n,\bullet+n}(\Th{X}{V}) \]
From a system of Thom isomorphism, we can get a family of Thom classes just taking $ \mr{th}(V):=\tau_V(1) $. Vice versa, giving a family of Thom classes $ \set{\mr{th}(V)} $, we can define $ \tau_V(-):=\mr{th}(V)\cup p^*- $ with $ p: V \rightarrow X $ the projection map and $ -\cup-: \mb E^{\bullet,\bullet}(\Th{X}{X})\times \mb E^{\bullet,\bullet}(V)\rightarrow \mb E^{\bullet,\bullet}(\Th{X}{V}) $ the cup usual product map.\\
So an $ SL $-orientation will correspond to giving (a system of) Thom isomorphism for all $ X \in \catname{Sm}_{\bigslant{}{S}} $, while an absolute $ SL $-orientation will correspond to giving Thom isomorphisms for all $ X \in \catname{Sch}_{	\bigslant{}{S}} $.
\end{rmk}

\vspace{1cm}

By \cite[Example 16.30]{Bachmann_Hoyois_Norms_MHT} (applied to $ G=(SL_n)_n $), for any scheme $ X \in \catname{Sch}_{	\bigslant{}{S}}  $ and any $ V $ vector $ SL $-bundle of rank $ n $, we have an isomorphism $ \tau_V: \Sigma^V\mr{MSL}_X\stackrel{\sim}{\rightarrow} \Sigma^{2n,n} \mr{MSL}_X  $, where $ \mr{MSL}_X:=f^*MSL_S $ is the pullback of the special linear algebraic cobordism spectrum of \cite{Panin-Walter_MSL_MSp} along the structure map $ f: X \rightarrow S $. If we denote $ u_{\mr{MSL}_X}: \mbbm 1_X \rightarrow \mr{MSL}_X $ the unit map of $ \mr{MSL}_X $, then we have:
\[ \Sigma^{V}u_{\mr{MSL}_X}: \Sigma^V \mbbm 1_X \longrightarrow \Sigma^V \mr{MSL}_X  \stackrel{\tau_V}{\simeq} \Sigma^{2n,n}\mr{MSL}_X  \]
Notice that $ \Sigma^{V}u_{\mr{MSL}_X} $ lives in $ \mr{MSL}_X^{2n,n}(\Th{X}{V})\simeq\mr{MSL}_S^{2n,n}(X, -[V]) $ and it is not hard to check that these elements satisfy all the properties in \cref{ch2:_SL_orientation_def}.
\begin{defn}
For any $ X, V $ as above, we will denote the elements $ \mr{th}_{\mr{MSL}}(V):=\Sigma^{V}u_{\mr{MSL}_X}  \in \mr{MSL}_X^{2n,n}(\Th{X}{V})\simeq\mr{MSL}_S^{2n,n}(X, -[V])  $ and we will call $ \mr{th}_{\mr{MSL}}(V) $ the \textit{canonical $ \mr{MSL} $-Thom class} of $ V $.
\end{defn}

Canonical $ \mr{MSL} $-Thom classes give us an absolute orientation for $ \mr{MSL}_S $ that restricts on smooth schemes to the usual $ SL $-orientation of $ \mr{MSL}_S $. For any  $ SL $-oriented ring spectrum $\mb E\in \mr{SH}(S) $, by \cite[Theorem 4.7, Lemma 4.9]{Ananyevskiy_Witt_MSp_Reations}, there exists a ring spectrum map $ \varphi: \mr{MSL}_S \rightarrow \mb E $ such that $ \varphi(\mr{th}_{\mr{MSL}}(V))=\mr{th}_{\mb E}(V) $ for each smooth $ X \in \catname{Sm}_{\bigslant{}{S}} $ and any vector $ SL $-bundle $ V $ on $ X $.

\begin{rmk}\label{ch2:_BM_Thom_iso}
Once we have Thom classes and Thom isomorphism in cohomology for some $ \mb E \in \mr{SH} $, we will get Thom isomorphisms also in Borel-Moore homology using the cohomology product action on Borel-Moore homology (cf. \cite[\S 3.3]{Levine_Atiyah-Bott}). In particular if $ \mb E $ is $ SL $-oriented with respect to $ \catname{C} $, for any $ V $ vector $ SL $-bundle over $ X \in \catname{C}\sseq \catname{Sch}_{\bigslant{}{S}} $, of rank $ r $, we will have:
\[ \mb E^{\mr{BM}}_{a+2r,b+r}\left( \bigslant{X}{S} \right)\stackrel{\sim}{\longrightarrow }  \mb E^{\mr{BM}}_{a,b}\left( \bigslant{X}{S}, [V] \right) \]
\end{rmk}

\begin{defn}
Given $ \mb E\in \mr{SH}(S) $ an $ SL $-oriented ring spectrum and a map $ \varphi: \mr{MSL} \rightarrow \mb E$ of ring spectra, we call $ \varphi $ \textit{an $ SL $-orientation map}.
\end{defn}

Since $ \varphi $ is a map of ring spectra, if we define for any $ X \in \catname{Sch}_{\bigslant{}{S}} $ and any vector $ SL $-bundle $ V $ on $ X $:
\[ \mr{th}^{\varphi}_{\mb E}(V):=\varphi(\mr{th}_{\mr{MSL}}(V))  \in \mb E^{2n,n}(\Th{X}{V})  \]
\noindent we get an absolute $ SL $-orientation on $\mb E $ extending the given $ SL $-orientation we already had.

\begin{defn}
Consider $ \mb E\in \mr{SH}(S) $ an $ SL $-oriented ring spectrum, and suppose we are given an $ SL $-orientation map $ \varphi: \mr{MSL} \rightarrow \mb E $. Then we call the \textit{$ \varphi $-induced absolute $ SL $-orientation} the orientation data given by Thom classes:
\[ \mr{th}^{\varphi}_{\mb E}(V):=\varphi(\mr{th}_{\mr{MSL}}(V))  \in \mb E^{2n,n}(\Th{X}{V})  \]
\noindent for any $ X \in \catname{Sch}_{\bigslant{}{S}} $ and any vector $ SL $-bundle $ V $ on $ X $. For short we  will just say \textit{$ \varphi $-induced $ SL $-orientation}.
\end{defn}

We do not know a priori if the $ \varphi: \mr{MSL} \rightarrow \mb E $ that can be associated with an $ SL $-orientation is unique. While a similar unicity statement holds true for $ GL $- and $ Sp $-orientations by \cite[Remark 2.1.5]{Déglise_Fasel_Borel_Char}, for $ SL $-orientations is still open: there could be an obstruction preventing the uniqueness of $ \varphi $ living in $ \lim^1 \mb E^{2n-1,n}(\mr{MSL}_n^{fin}) $ by \cite[Theorem 5.8]{Panin-Walter_MSL_MSp}, where $ \mr{MSL}_n^{fin} $ are the Thom spaces associated to the tangent bundle of the special linear Grassmannian $ SGr(n,n^2) $ (cf. \cite[\S 5]{Panin-Walter_MSL_MSp},\cite[Def. 4.5]{Ananyevskiy_Witt_MSp_Reations}).

\begin{pr}\label{ch2:_Abs_VS_normal_Orientations}
Let $ \mr A \in \mr{SH}(\sk) $ be an $ \eta $-invertible motivic ring spectrum. Then $ SL $-orientations are in one to one correspondence with $ SL $-orientation maps $ \varphi: \mr{MSL}_{\sk}\longrightarrow \mr A $.
\end{pr}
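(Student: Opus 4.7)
The assignment $\varphi \mapsto \{\mathrm{th}_{\mathrm{A}}^{\varphi}(V)\}_V$, sending an $SL$-orientation map to its induced family of Thom classes, has already been spelled out in the discussion preceding the statement and is manifestly well-defined. Surjectivity is essentially already in hand: by \cite[Theorem 4.7, Lemma 4.9]{Ananyevskiy_Witt_MSp_Reations}, cited in the excerpt, every $SL$-orientation of $\mathrm{A}$ is realised by at least one ring spectrum map $\varphi: \mathrm{MSL}_{\sk} \to \mathrm{A}$. Hence the proposition reduces to \emph{injectivity}, i.e.\ to showing that $\varphi$ is uniquely determined by the universal Thom classes $\varphi(\mathrm{th}_{\mathrm{MSL}}(\mathcal{U}_n))$ on the tautological $SL_n$-bundles $\mathcal{U}_n$.

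The approach is to apply the Milnor $\lim^{1}$ exact sequence to the presentation $\mathrm{MSL}_{\sk} \simeq \hocolim{n}\, \Sigma^{-2n,-n}\mathrm{MSL}_n^{fin}$:
\begin{equation}
0 \longrightarrow \lim_n{}^{1}\, \mathrm{A}^{2n-1,n}(\mathrm{MSL}_n^{fin}) \longrightarrow \left[\mathrm{MSL}_{\sk}, \mathrm{A}\right]_{\mathrm{SH}(\sk)} \longrightarrow \lim_n\, \mathrm{A}^{2n,n}(\mathrm{MSL}_n^{fin}) \longrightarrow 0.
\end{equation}
The rightmost term parametrises compatible families of universal Thom classes, while the $\lim^{1}$ term on the left is exactly the obstruction to unicity of $\varphi$ flagged in \cite[Theorem 5.8]{Panin-Walter_MSL_MSp}. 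Thus the proposition is equivalent to showing that this $\lim^{1}$ vanishes whenever $\mathrm{A}$ is $\eta$-invertible. To establish this, I would first use $\eta$-invertibility to collapse the bigrading, identifying each term of the system with a singly-graded group $\mathrm{A}^{n-1}(\mathrm{MSL}_n^{fin})$, and then apply the absolute Thom isomorphism for the $\mathrm{MSL}$-orientation to rewrite it as $\mathrm{A}^{-n-1}$ of a geometric approximation of $BSL_n$. Finally, I would invoke Ananyevskiy's structure theorem for $\mathrm{A}^{*}(BSL_n)$ in the $SL_\eta$-oriented setting (a power series ring in Pontryagin classes together with, in even rank, the Euler class). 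Since the Pontryagin classes are stable under the inclusions $SL_n \hookrightarrow SL_{n+1}$, the transition maps of the inverse system become surjective in each fixed total degree, the Mittag--Leffler condition holds, and $\lim^{1}=0$ as required.

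\textbf{Main obstacle.} The chief technical point is matching the transition maps of the inverse system $\{\mathrm{A}^{2n-1,n}(\mathrm{MSL}_n^{fin})\}_n$ with the stabilisation maps along $SL_n \hookrightarrow SL_{n+1}$ and cross-referencing them with Ananyevskiy's description of $\mathrm{A}^{*}(BSL_n)$ so as to extract Mittag--Leffler surjectivity in the precise degree $2n-1$. The hypothesis of $\eta$-invertibility enters crucially at exactly this step: it is what collapses the bigrading and turns $\mathrm{A}^{*}(BSL_n)$ into the power series ring whose stable generators, the Pontryagin classes, trivialise the Panin--Walter obstruction. Once uniqueness is known, a given $\varphi$ is recovered from its orientation by pushing forward universal Thom classes, and the two assignments are mutually inverse.
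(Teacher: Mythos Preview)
Your proposal is correct and follows essentially the same route as the paper: existence from \cite[Theorem 4.7, Lemma 4.9]{Ananyevskiy_Witt_MSp_Reations}, uniqueness by killing the Panin--Walter $\lim^1$ obstruction via Thom isomorphisms and Ananyevskiy's computation of $\mr A^{\bullet}$ of special linear Grassmannians. The only refinement the paper adds is a cofinality step, replacing the full tower $\{\mr{MSL}_n^{fin}\}_n$ by the even-indexed subsystem $\{\mr{MSL}_{2n}^{fin}\}_n$, so that the underlying Grassmannians $SGr(2n,4n^2)$ fall directly within the scope of \cite[Theorem 9]{Ananyevskiy_PhD_Thesis}; you would need to insert this (or handle odd ranks via the duality $SGr(2n+1,2k+1)\simeq SGr(2k-2n,2k+1)$) when you actually invoke Ananyevskiy's structure theorem.
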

\begin{proof}
Let $ SGr_{\sk}(n,m) $ the special linear Grassmannian, defined as the complement of the zero section of the determinant bundle associated to the universal bundle $ E(n,m) $ over the Grassmannian $ Gr_{\sk}(n,m) $ (see \cite{Ananyevskiy_PhD_Thesis} for more details). Let us denote $ \mr{Th}(n,m):=\Th{SGr(n,m)}{\oocatname{T}(n,m)} $ the Thom space associated to the tautological bundle $ \oocatname{T}(n,m) $ of the special linear Grassmannian $ SGr(n,m) $. Since $ \mr A $ is $ \eta $-invertible, we can adopt the single graded convention $ \mr A^{\bullet} $. By \cite[Theorem 5.8]{Panin-Walter_MSL_MSp}, we know that $ \varphi: \mr{MSL}_{\sk} \rightarrow \mr A $ as in our claim exists and the obstruction to the uniqueness of $ \varphi $ lies in $ \lim^1 \mr A^{n-1}(\mr{MSL}_n^{fin}) $, where $ \mr{MSL}_n^{fin}=\mr{Th}(n,n^2) $ are the finite approximation spaces for $ \mr{MSL}_{\sk} $. By a cofinality argument, we have that the same proof as in \textit{loc. cit.} works also if we use, as finite level approximation for $ \mr{MSL}_{\sk} $ ,the spaces $ \mr{MSL}_{2n}^{fin} $. So it turns out that $ \varphi $ is unique if:
\[ \lim\! {}^1\  \mr A^{2n-1}(\mr{MSL}_{2n}^{fin})=0 \]
Notice that it is enough to show the surjectivity of the maps:
\[ \ldots \rightarrow \mr A^{p}(\mr{MSL}_{2(n+1)}^{fin}) \stackrel{i_{n}^*}{\longrightarrow} \mr A^{p}(\mr{MSL}_{2n}) \rightarrow \ldots \]
\noindent induced by the maps of the direct system:
\[ \ldots \rightarrow \oocatname{T}(2n,4n^2)\stackrel{i_n}{\longrightarrow} \oocatname{T}(2(n+1), 4(n+1)^2) \rightarrow \ldots \]
But since $ A $ is  $ SL $-oriented, for every $ k $, we have isomorphisms:
\[ \mr A^{\bullet-2n}(SGr(2n,k)) \stackrel{\cup \mr{th}(2n,k)}{\longrightarrow} \mr A^{\bullet}(\Th{SGr(2n,k)}{\oocatname{T}(2n,k)}) \]
\noindent where $ \mr{th}(2n,k) $ denotes the Thom class of $ \oocatname{T}(2n,k) $. Using the last Thom isomorphism together with the computations in \cite[Theorem 9]{Ananyevskiy_PhD_Thesis}, we get that $ i_n^* $ are surjective and hence $ \lim^1 \mr A^{2n-1}(\mr{MSL}_{2n}^{fin})=0 $, giving us the uniqueness of $ \varphi $.
\end{proof}

Given an $ SL $-oriented ring spectrum $ \mb E $, with an $ SL $-orientation map $ \varphi: \mr{MSL} \rightarrow \mb E $, the $ \varphi $-induced $ SL $-orientation is uniquely determined. On the other hand, given an absolute $ SL $-orientation, its restriction to smooth schemes $ X \in \catname{Sm}_{\bigslant{}{S}} $ uniquely determines an associated $ SL $-orientation, thus we get the following:
\begin{co}\label{ch2:_abs_SL_or_on_eta_inv}
Let $ \mr A \in \mr{SH}(\sk) $ be an $ \eta $-invertible motivic ring spectrum. Then we have a one to one correspondence between the following data:
\begin{enumerate}
	\item $ SL $-orientations on $ \mr A $;
	\item maps of ring spectra $ \varphi: \mr{MSL}\longrightarrow \mr A $ such that $ \varphi(\mr{th}_{\mr{MSL}}(V))=\mr{th}_{\mr A}(V)   $ for any $ V $ vector $ SL $-bundle over $ X\in \catname{Sm}_{\bigslant{}{\sk}} $;
	\item absolute $ SL $-orientations.
\end{enumerate}
\end{co}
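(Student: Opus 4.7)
The plan is to reduce the three-way correspondence to the two-way equivalence between items (1) and (2) already established in \Cref{ch2:_Abs_VS_normal_Orientations}, by inserting the absolute orientations of item (3) as a compatible refinement. Concretely I will construct mutually inverse maps $(2) \leftrightarrow (3)$ which commute with $(1) \leftrightarrow (2)$ under restriction to smooth schemes.

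For $(2) \to (3)$ I use the $\varphi$-induced absolute orientation already introduced, with Thom classes $\mr{th}^{\varphi}_{\mr A}(V) := \varphi(\mr{th}_{\mr{MSL}}(V))$ for every $X \in \catname{Sch}_{\bigslant{}{\sk}}$ and every $SL$-bundle $V$ on $X$; the axioms of \Cref{ch2:_SL_orientation_def} transport from the canonical $\mr{MSL}$-Thom classes along the ring map $\varphi$, since those classes satisfy the absolute orientation axioms. For $(3) \to (1)$ I simply restrict an absolute orientation to $\catname{Sm}_{\bigslant{}{\sk}}$, which is tautologically an $SL$-orientation; composing with $(1) \to (2)$ via \Cref{ch2:_Abs_VS_normal_Orientations} then produces an $SL$-orientation map $\varphi$ out of any absolute orientation.

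The main obstacle, and the real content of the corollary, is to verify that $(3) \to (1) \to (2) \to (3)$ is the identity: equivalently, that an absolute $SL$-orientation is uniquely determined by its restriction to smooth schemes. The key input is that the classifying spaces for $SL_n$-bundles may be taken to be the smooth special linear Grassmannians $SGr(n,N)$: given $X \in \catname{Sch}_{\bigslant{}{\sk}}$ and an $SL_n$-bundle $V$ on $X$, the quasi-projectivity assumption realises $V$ as $f^*\mc T(n,N)$ for some classifying map $f: X \to SGr(n,N)$, where $\mc T(n,N)$ is the tautological bundle. Axioms (1)--(2) of \Cref{ch2:_SL_orientation_def} then force $\mr{th}(V) = f^* \mr{th}(\mc T(n,N))$, and the right-hand side lives over a smooth base, hence is completely determined by the underlying $SL$-orientation. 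The reverse composition $(2) \to (3) \to (1) \to (2)$ is the identity by the uniqueness clause of \Cref{ch2:_Abs_VS_normal_Orientations}, since the $\varphi$-induced orientation restricted to smooth schemes coincides on Thom classes with the $SL$-orientation one started with.
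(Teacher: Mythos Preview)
Your overall strategy is sound and in fact more explicit than the paper's own argument, which simply notes that the $\varphi$-induced absolute orientation is well-defined and that restriction to smooth schemes is well-defined, then appeals to \Cref{ch2:_Abs_VS_normal_Orientations} without further comment. You correctly isolate the real content: showing that $(3)\to(1)$ is injective, i.e.\ that an absolute orientation is determined by its restriction to $\catname{Sm}_{\bigslant{}{\sk}}$.

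There is, however, a gap in the step where you assert that ``the quasi-projectivity assumption realises $V$ as $f^*\mc T(n,N)$ for some classifying map $f:X\to SGr(n,N)$''. In the paper's model the tautological bundle on $SGr(n,N)$ is the tautological \emph{quotient}, so such an $f$ exists precisely when $V$ is globally generated, and this fails in general: for instance, a non-trivial degree-zero line bundle $L$ on a nodal cubic gives an $SL_2$-bundle $L\oplus L^{-1}$ with no non-zero global sections in the first summand. The fix is routine: apply Jouanolou's trick to obtain an affine-space torsor $\pi:\widetilde X\to X$ with $\widetilde X$ affine; then $\pi^*V$ is globally generated and hence classified by some $g:\widetilde X\to SGr(n,N)$. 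Axiom~(2) yields $\pi^*\mr{th}(V)=\mr{th}(\pi^*V)=g^*\mr{th}(\mc T(n,N))$, and since $\pi$ induces an $\mb A^1$-weak equivalence $\Th{\widetilde X}{\pi^*V}\to\Th{X}{V}$ (check Zariski-locally, where $\pi$ is a projection $U\times\mb A^r\to U$), the pullback $\pi^*$ on $\mb E^{\bullet,\bullet}(\Th{X}{V})$ is an isomorphism and $\mr{th}(V)$ is recovered from the smooth value $\mr{th}(\mc T(n,N))$. With this amendment your argument goes through.
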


\begin{rmk}\label{ch2:_SL_and_Sp_absolute_orientaions}
We will only deal with $ SL_{\eta} $-oriented theories over smooth $ \sk $-schemes or just over some field $ \sk $. So from now on, with a slight abuse of notation, we will just say $ SL $-orientation instead of \textit{absolute} $ SL $-orientation. According to the corollary above, this will make no harm in the case we are working over a field. Using a Leray spectral sequence argument we can also extend \cref{ch2:_Abs_VS_normal_Orientations} to smooth $ \sk $-scheme $ S $ (cf. \cref{ch3:_Ana_Thm_9+}), but for most of our applications we will just work over a field, hence we will not need this result in such generality. Thanks to \cite[Remark 2.1.5]{Déglise_Fasel_Borel_Char}, we also need no distinction between $ Sp $-oriented and absolutely $ Sp $-oriented theories.
\end{rmk}

Recall from \cite{Panin-Walter} that there exists a spectrum $ \mr{BO}_{S} \in SH(S) $,  whenever $ \dfrac{1}{2} \in \mo{S}^{\times} $, that represents Hermitian K-theory\footnote{There are recent works towards possible extension to more general schemes where $ 2 $ is not invertible in the ring of regular functions. It is worth mentioning for example \cite{Kumar_PhD}.}. 
\begin{defn}
Let  $ \mr{KW}_{S}:=\mr{BO}_{S,\eta} $ be the Witt theory (absolute) spectrum defined by inverting the element $ \eta \in \mr{BO}^{-1,-1}_{S}(S) $ as done in detail in \cite[\S6 and Theorem 6.5]{Ananyevskiy_Witt_MSp_Reations}.  
\end{defn}
\begin{rmk}
The spectrum $ \mr{BO}_S $ is $ Sp $-oriented (cf. \cite{Panin-Walter}) and hence $ SL $-oriented. This induces an $ SL $-orientation on $ \mr{KW} $, and indeed $ \mr{KW} $ will be our main example and focus point as an $ SL_{\eta} $-oriented theory.
\end{rmk}

\subsection{Thom Isomorphism and Euler Classes}
For any $ SL $-oriented theory, we can then talk about Euler classes $ e(E,\theta) $ for $ SL $-bundles $ E $.

\begin{notation}
As already mentioned in the introduction, we will adopt the convention of \cite{Motivic_Euler_Char} for twisted cohomology theories. That means that given $ \mb E $ an $ SL $-oriented theory and $ L \rightarrow X $ a line bundle over some $ X \in \catname{Sch}_{\bigslant{}{S}} $, we denote the $ L $-twisted $ \mb E $- cohomology by:
\[ \mb E^{a,b}\left( X; L \right):=\mb E^{a+2,b+1}\left( \Th{X}{L} \right) \]
Similarly, given a vector bundle $ V\rightarrow X $, we will denote the $ L $-twisted $ \mb E $-cohomology on $ \Th{X}{V} $ as:
\[ \mb E^{a,b}\left( \Th{X}{V}; L \right):=\mb E^{a+2,b+1}\left( \Th{X}{V}\otimes \Th{X}{L} \right)\simeq \mb E^{a+2,b+1}\left( \Th{X}{V\oplus L} \right) \]
\end{notation}

\begin{rmk}
Notice that if $ L\simeq\A^1_X $ is the trivial line bundle, then $ \mb E^{a,b}(X;L):=\mb E^{a+2,b+1}(\pro^1_X)\simeq \mb E^{a,b}(X) $.
\end{rmk}

Given any vector bundle $ V $ of rank $ r $ on $ X \in \catname{Sch}_{\bigslant{}{S}} $, if $ L:=\det(V) $, we can construct the associated $ SL $-vector bundle given by $ V \oplus L^{-1} $ with its canonical trivialization of the determinant $ \omega_{can}: V\oplus L^{-1}\rightarrow \mo{X} $.

\begin{defn}\label{ch2:_def_twisted_Thom_class}
Let $ \catname{C} $ be a full subcategory of $ \catname{Sch}_{\bigslant{}{S}} $ and let $ \mb E \in \mr{SH}(S) $ be a ring spectrum with an $ SL $-orientation with respect to $ \catname{C} $. Let $ p: V \rightarrow X $ be a rank $ r $ vector bundle on $ X \in \catname{C} $ with determinant $ L:=\det(V) $.
\begin{enumerate}
	\item 	We define the Thom class in $ L^{-1} $-twisted cohomology by:
	\[ \mr{th}(V):=\mr{th}_{V\oplus L^{-1}}\in \mb E^{2r,r}(\Th{X}{V}; L^{-1}):=\mb E^{2r+2,r+1}(\Th{X}{V\oplus L^{-1}}) \]

	\item Let $ s_{0,L}: X \oplus L^{-1} \stackrel{s_0\oplus Id}{\longrightarrow} V\oplus L^{-1} $ be the map induced by the zero section $ s_0 $ of $ V $, then we define the (twisted) Euler class as:
	\[ e(E):=s_{0,L}^*\mr{th}^{\varphi}(V) \in \mb E^{2n+2,n+1}(\Th{X}{L^{-1}})=\mb E^{2n,n}(X; L^{-1}) \]
	
\end{enumerate}
\end{defn}

\begin{pr}[Twisted Thom Isomorphism]\label{ch2:_Twisted_Thom_SL}
Let $ p:V\rightarrow X $ be a rank $ r $ vector bundle over a scheme $ X $, and let $ \mb E\in \mr{SH}(S) $ be a $ SL $-oriented ring spectrum together with an $ SL $-orientation map $ \varphi $. Then we have an isomorphism:
\[ \vartheta_{V}^{\varphi}:= p^*(-) \cup \mr{th}^{\varphi}(V): \mb E^{*,*}(X; \det(V)) \longrightarrow \mb E^{*+2r,*+r}(\mr{Th}(V)) \]
\end{pr}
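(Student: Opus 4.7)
The plan is to prove the isomorphism by verifying that the natural transformation $\vartheta_V^{\varphi}$ satisfies the hypotheses of \Cref{ch2:_rmk_Thom_iso}, and then reducing to the trivial bundle case via a Mayer--Vietoris argument on a Zariski cover of $X$ on which $V$ trivializes.

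The starting observation is that, with $L = \det(V)$, the bundle $V \oplus L^{-1}$ has canonically trivial determinant via $\omega_{\mr{can}}: \det(V \oplus L^{-1}) \simeq L \otimes L^{-1} \simeq \mo{X}$, hence is an $SL_{r+1}$-bundle. Therefore all of $\mr{th}^{\varphi}(V) = \mr{th}^{\varphi}(V \oplus L^{-1})$ and its associated cup-product map inherit the naturality properties (1) and (2) of \Cref{ch2:_SL_orientation_def} automatically from those of the $SL$-Thom class of $V \oplus L^{-1}$. Writing out the cup product in terms of the external product on Thom spaces,
\[
\mb E^{a+2,b+1}(\Th{X}{L}) \otimes \mb E^{2r+2,r+1}(\Th{X}{V \oplus L^{-1}}) \longrightarrow \mb E^{a+2r+4,b+r+2}(\Th{X}{V \oplus L \oplus L^{-1}}),
\]
one sees that $\vartheta_V^\varphi$ lands in $\mb E^{*+2r,*+r}(\Th{X}{V})$ after composing with the inverse of the $SL$-Thom isomorphism for the rank-$2$ $SL$-bundle $L \oplus L^{-1}$ applied on the trivial $L \oplus L^{-1}$ summand. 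Naturality (in $X$, in $V$, and with respect to isomorphisms) follows at once from properties (1)--(3) of \Cref{ch2:_SL_orientation_def} and the naturality of pullback and cup product.

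For the normalization step, I would check that when $V = \A^r_X$ the determinant $L \simeq \mo{X}$ is canonically trivial and $V \oplus L^{-1} \simeq \A^{r+1}_X$. By the multiplicativity of $SL$-Thom classes (property (3)) and the normalization (property (4)), the class $\mr{th}^{\varphi}(\A^{r+1}_X)$ reduces to $(r+1)$-fold suspension, which, after cancelling the trivial $L \oplus L^{-1}$ twist against itself via the same Thom multiplicativity, identifies $\vartheta_{\A^r_X}^\varphi$ with the canonical $(2r,r)$-fold suspension isomorphism $\mb E^{*,*}(X) \simeq \mb E^{*+2r,*+r}(\Th{X}{\A^r_X})$.

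Finally, since both source and target are cohomology theories satisfying Mayer--Vietoris in the variable $X$, and $\vartheta_V^\varphi$ is compatible with restriction, an induction on the size of a Zariski cover trivializing $V$, combined with the five lemma and the trivial-bundle case above, concludes that $\vartheta_V^\varphi$ is an isomorphism on all of $X$. The main obstacle is the bookkeeping in the normalization step: one must carefully match the Levine twist convention $\mb E^{*,*}(X;L) = \mb E^{*+2,*+1}(\Th{X}{L})$ with the defining identity $\mr{th}^\varphi(V) = \mr{th}^\varphi(V \oplus L^{-1})$, and verify that the cancellation of the $L \oplus L^{-1}$ twist via the $SL$-Thom class of the trivialized rank-$2$ bundle is compatible with the cup product, so that the map genuinely reduces to a suspension isomorphism in the trivial case rather than picking up an unwanted sign or unit.
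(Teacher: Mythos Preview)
Your proposal is correct and uses the same key ingredients as the paper: the $SL$-Thom isomorphisms for the two $SL$-bundles $V\oplus L^{-1}$ and $L\oplus L^{-1}$. The difference is in how these are assembled. The paper works directly at the spectrum level: the $SL$-Thom isomorphism for $V\oplus L^{-1}$ gives an equivalence $\Sigma^{[\mo{}^{r+1}]-[\mc V]-[\mc L^{-1}]}\mb E\simeq \mb E$, the one for $L\oplus L^{-1}$ gives $\Sigma^{[\mo{}^{2}]-[\mc L]-[\mc L^{-1}]}\mb E\simeq \mb E$, and subtracting yields $\Sigma^{[\mo{}^{r}]-[\mc V]}\mb E\simeq \Sigma^{[\mo{}]-[\mc L]}\mb E$, which is exactly $\vartheta_V^{\varphi}$. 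No separate Mayer--Vietoris step is needed because the isomorphism is obtained as a composite of two maps already known to be invertible. Your route---checking naturality and normalization, then running Mayer--Vietoris on a trivializing cover of $V$---is a valid local-to-global argument, but it re-proves from scratch something that follows immediately once you observe that both factors in your composite (cup with $\mr{th}^{\varphi}(V\oplus L^{-1})$, then invert the $L\oplus L^{-1}$ Thom isomorphism) are already isomorphisms globally. The paper's spectrum-level phrasing makes this composition transparent and avoids the bookkeeping you flag in your last paragraph.
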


\begin{proof}
Denote by $ L:=\det(V) $ the determinant bundle of $ V $ and let $ \mc V $, $ \mc L $ be the locally free sheaves associated to $ V $ and $ L $. Using the absolute $ SL $-orientation induced by $ \varphi $, the construction in \cite[\S 3.10]{Levine_Raksit_Gauss_Bonnet} works verbatim in our case. Let us briefly sketch how one should proceed (more details can be found in \textit{loc. cit}).  The Thom class $ \mr{th}_{V\oplus L^{-1}}^{\varphi} $ gives us a Thom isomorphism:
\[ q^*(-)\cup \mr{th}_{V \oplus L^{-1}}^{\varphi}: \mb E^{\bullet,\bullet}(X)\longrightarrow \mb E^{\bullet+2(r+1),\bullet+r+1}(\Th{X}{V}; L^{-1}) \]
This means that we have an equivalence of spectra:
\[ \Sigma^{[\mo{}^{r+1}]-[\mc V]-[\mc L]}\mb E\simeq \mb E \]
Similarly we have $ \Sigma^{[\mo{}^{2}]-[\mc L]-[\mc L^{-1}]}\mb E\simeq \mb E $, and hence:
\begin{equation}\label{ch2:_eq_Abs_Twisted_Thom}
	\Sigma^{[\mo{}^{r}]-[\mc V]}\mb E\simeq \Sigma^{[\mo{}]-[\mc L]}\mb E
\end{equation}
The equivalence of \cref{ch2:_eq_Abs_Twisted_Thom} (together with homotopy invariance for $ p: V \rightarrow X $) gives us our isomorphism $ \vartheta_{V}^{\varphi} $.
\end{proof}

\begin{rmk}
If $ \mb E\in \mr{SH}(\sk) $ is an $ SL_{\eta} $-oriented motivic spectrum, we will drop the $ \varphi $ from the notation in virtue of \cref{ch2:_abs_SL_or_on_eta_inv}. Notice also that the Euler classes defined by the $ SL $-orientations will coincide, under the relevant Thom isomorphism, with the Euler classes defined in Chapter 1 using the formalism of  \cite{DJK}.
\end{rmk}

References with more details for Euler classes in $ SL $-oriented theories can be found in \cite[\S 3]{Ananyevskiy_SL_oriented} and \cite[\S 3]{Levine_Raksit_Gauss_Bonnet} (even if they work with $ SL $-orientations, everything can be adapted to our $ \varphi $-induced, absolute $ SL $-oriented case). A treatment of Euler classes closer to the one given here can also be found in \cite[\S 5]{Bachmann_Wickelgren}.

\begin{rmk}
Consider an $ SL $-oriented spectrum $ \mb E \in \mr{SH}(S) $, with an $ SL $-orientation map $ \varphi $, and $ V $ a vector bundle over $ X \in \catname{Sch}_{\bigslant{}{S}} $ with determinant $ L:=\det(V) $ (and associated locally free sheaves denoted by $ \mc V $ and $ \mc L $). Similarly to \cref{ch2:_BM_Thom_iso}, using $ L $-twisted Borel-Moore homology:
\[ \mb E^{\mr{BM}}_{a,b}\left( \bigslant{X}{S}; L \right):=\mb E^{\mr{BM}}_{a-2,b-1}\left( \bigslant{X}{S}, -[\mc L] \right) \]
\noindent we get $ L^{-1} $-twisted Thom isomorphism:
\[ \mb E^{\mr{BM}}_{a-2r,b-r}\left( \bigslant{X}{S}; L^{-1} \right)\stackrel{\sim}{\longrightarrow}  \mb E^{\mr{BM}}_{a,b}\left( \bigslant{X}{S}, [V]\right)  \]
\noindent using the $ \varphi $-induced Thom classes $ \mr{th}^{\varphi}(V)\in \mb E^{2r,r}(X; L^{-1}) $. To remember how twisted Thom isomorphism works (both for cohomology and Borel-Moore homology), it is enough to remember that:
\[ \Sigma^{[\mc V]\oplus [\mc L^{-1}]}\mb E\simeq \Sigma^{2r+2,r+1}\mb E \]
\noindent or equivalently:
\[ \Sigma^{-2(r+1), -(r+1)}\Sigma^{[\mc V]}\mb E\simeq \Sigma^{2,1}\Sigma^{-[\mc L^{-1}]}\mb E \]
\end{rmk}

\begin{construction}\label{ch2:_tautological_symbol}
We will now construct a \textit{symbol} element associated to a section of a line bundle, using the construction of a symbol associated to an invertible function on a scheme $ X $ as done in \cite[Definition 6.1]{Ananyevskiy_SL_oriented}. For simplicity we will restrict to the case of a scheme, but the same procedure will work for any NL-stack without changing a word. Recall from \textit{loc. cit.} that given $ u \in \Gamma(X, \mo{X}^{\times}) $, for $ X \in \catname{Sm}_{\bigslant{}{S}} $ and $ \mb E \in \mr{SH}(X) $, we have a well defined element $ \langle u\rangle \in \mb E^{0,0}(X) $ induced by the multiplication by $ u $ on $ T=\bigslant{\A^1_X}{\mb G_{m,X}} $. This element $ \langle u \rangle $ is  called the \textit{symbol} associated to $ u $. Consider now a line bundle $ p:L\rightarrow X $, and consider $ \lambda: X \rightarrow L $ a section. Denote by $ \mc Z(\lambda) $ the vanishing locus of $ \lambda $:
\begin{center}
	\begin{tikzpicture}[baseline={(0,1)}, scale=1.5]
		\node (a) at (0,1) {$ \mc Z(\lambda) $};
		\node (b) at (1, 1) {$ X $};
		\node (c)  at (0,0) {$  X $};
		\node (d) at (1,0) {$ L $};
		\node (e) at (0.25,0.75) {$ \ulcorner $};
		\node (f) at (0.5,0.5) {$  $};

		\path[font=\scriptsize,>= angle 90]
		
		(a) edge [closed] node [above ] {$ \iota_{\lambda} $} (b)
		(a) edge [closed] node [left] {$  $} (c)
		(b) edge[closed] node [right] {$ s_0 $} (d)
		(c) edge [closed] node [below] {$ \lambda $} (d);
	\end{tikzpicture}
\end{center}
\noindent Let $ j_{\lambda}:U(\lambda)\into X $ be the open complement in $ X $ of $ \mc Z(\lambda) $. Then $ \lambda $ induces a non vanishing section $ j^*\lambda: U \rightarrow j^*_{\lambda}L^{\times} $ of $ j^*_{\lambda}L $. But this means we can trivialise $ j_{\lambda}^*L $, i.e. we have:
\[ \tau_{j^*_{\lambda}}: \mb A^{1}_{U(\lambda)}\stackrel{\sim}{\longrightarrow} j^*_{\lambda}L \]
\noindent with associated inverse:
\[ \left( \tau_{j^*_{\lambda}} \right)^{-1}: j^*_{\lambda}L \stackrel{\sim}{\longrightarrow} \A^1_{U(\lambda)} \]
Taking the associated Thom spaces, we get:

\[ \mr{Th}(\tau_{j^*_{\lambda}}^{-1}):\Th{U(\lambda)}{j^*_{\lambda}L}\simeq\Sigma^{j^*_{\lambda}\mc  L} \mbbm 1_{U(\lambda)} \longrightarrow \Th{U(\lambda)}{\A^1}\simeq \Sigma^{\mo{}}\mbbm 1_{U(\lambda)} \]
Twisting by $ \Sigma^{-\mo{}} $, we have:

\[  \Sigma^{-\mo{}}\mr{Th}(\tau_{j^*_{\lambda}}^{-1}): \Sigma^{-\mo{}}\Sigma^{j^*_{\lambda}\mc  L} \mbbm 1_{U(\lambda)} \longrightarrow \mbbm 1_{U(\lambda)} \]
\begin{defn}\label{ch2:_symbol_of_line_bundle_section}
	With the same notation above, let $ \mb E\in \mr{SH}(S) $ be a ring spectrum with unit $ u: \mbbm 1 \rightarrow \mb E $, then we define the $ \mb E $-\textit{symbol} associated to $ \lambda: X \rightarrow L $ to be:
	\[ \langle \lambda \rangle_{\mb E}:=u\circ \Sigma^{-\mo{}}\mr{Th}(\tau_{j^*_{\lambda}}^{-1}): \Sigma^{-\mo{}}\Th{U(\lambda)}{j^*_{\lambda}L} \rightarrow \mb E \in \mb E^{0,0}(U(\lambda); j^*_{\lambda}L) \]
	
\end{defn}

\begin{exa}
	Consider the section of $ \mo{\pro^2}(2) $ given by $ Q=T_1^2-4T_0T_2 $, then $ U(Q)=\pro^2\setminus C $ with $ C $ the zero-locus of $ Q $. Then for any $ \mb E\in \mr{SH}(S) $ we have:
	\[ \langle Q\rangle \in \mb E^{0,0}(\pro^2\setminus C; \mo{}(2)) \]
	Suppose that $ \mb E $ is either an element of $ \mr{SH}(S)[\eta^{-1}] $ or it is $ SL $-oriented. Then for any scheme $ X $ and any line bundle $ L $ over $ X $, by \cite[Proposition 3.3.1]{Haution_Odd_VB} or \cite[Theorem 4.3]{Ananyevskiy_SL_oriented} respectively, there exists an isomorphism:
	\[ \varphi: \mb E(X; L^{\otimes 2}) \stackrel{\sim}{\longrightarrow} \mb E(X) \]
	Hence we get a well defined element:
	\[ q_0:=\varphi(\langle Q\rangle)\in \mb E^{0,0}(\pro^2\setminus C) \]
\end{exa}

\begin{exa}
	Consider $ p:L\rightarrow X $ a line bundle over $ X $. Let:
	\[ t_{can}: L\rightarrow p^*L \]
	\noindent be the tautological section. Then $ U(t_{can})=L^{\times}=L\setminus 0 $ and for any $ \mb E\in \mr{SH}(S) $ we get:
	\[ \langle t_{can}\rangle \in \mb E^{0,0}(L^{\times}; L) \]
	Consider  $ X=BGL_n $ and $ L=\mo{}(1) $. Then $ L^{\times}\simeq BSL_n $ and we get:
	\[ \langle t_{can}\rangle \in \mb E^{0,0}(BSL_n; \mo{}(1)) \]
	Sometimes we will refer to $ \langle t_{can}\rangle $ as the \textit{tautological symbol} associated to $ L $.
\end{exa}

\end{construction}

%
%

\subsection{$ SL $-Orientations for NL-Stacks}\label{ch2:_orientations_Stacks}
We will now present an easy way to get Thom classes and Euler classes on NL-stacks. The methods used here can be adapted to most of the common $ G $-orientations used in the literature, but since we will need to specialise to $ SL $-oriented spectra anyway, we will only talk about those.\\

Consider $ \oocatname{U_n}\rightarrow \mc BSL_n $ the universal bundle over $ \mc BSL_n $ (the one corresponding, under Yoneda, to the identity map of $ \mc BSL_n $. 

%
%

\begin{pr}\label{ch2:_Univ_Thom_Iso}
Let $ \mb E \in \mr{SH}(S) $ be an $ SL $-oriented ring spectrum. Then we have a natural equivalence of mapping spectra:
\[ \tau: \mb E(\mc BSL_n)\longrightarrow \Sigma^{2n,n}\mb E(\Th{\mc BSL_n}{\oocatname{U_n}}) \]
\end{pr}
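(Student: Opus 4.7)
The strategy is to reduce the statement on the stack $\mc BSL_n$ to a compatible system of Thom isomorphisms on a scheme-level approximation, where the hypothesis of $SL$-orientation can be applied directly. Via \cite[Proposition 3.33]{Motivic_Vistoli} (quoted earlier in the paper), we identify $\mc BSL_n$ with the ind-scheme approximation $BSL_n = \colim_m B_m SL_n$, where each $B_m SL_n$ is a smooth quasi-projective $S$-scheme. Pulling back the universal bundle $\oocatname{U_n}$ along each inclusion gives an honest rank-$n$ $SL$-vector bundle $U_{n,m}$ over $B_m SL_n$, to which the (absolute) $SL$-orientation of $\mb E$ applies.

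First I would invoke the absolute $SL$-orientation, via the induced map $\varphi: \mr{MSL}\to \mb E$ and the canonical Thom classes of \cref{ch2:_Abs_VS_normal_Orientations} and \cref{ch2:_Twisted_Thom_SL}, to produce Thom isomorphisms
\[ \tau_m: \mb E(B_m SL_n) \stackrel{\sim}{\longrightarrow} \Sigma^{2n,n}\mb E\!\left(\Th{B_m SL_n}{U_{n,m}}\right) \]
for every $m$. Next I would check that these are compatible along the closed immersions $B_m SL_n \hookrightarrow B_{m+1} SL_n$: this is immediate from axiom (2) of \cref{ch2:_SL_orientation_def} (pullback compatibility of Thom classes), together with the natural identification $U_{n,m}$ with the restriction of $U_{n,m+1}$. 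Finally I would pass to the homotopy limit: since $\mb E(BSL_n)\simeq \holim_m \mb E(B_m SL_n)$ and the Thom construction commutes with the relevant colimits (by the six-functor formalism for NL-stacks recalled from \cite{ChoDA24}, applied to the smooth structure maps), a compatible system of equivalences $\tau_m$ descends to the claimed equivalence $\tau$.

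The main obstacle is the limit step: strictly speaking one must ensure the absence of $\lim{}^1$ obstructions when identifying $\mb E(\Th{\mc BSL_n}{\oocatname{U_n}})$ with $\holim_m \mb E(\Th{B_m SL_n}{U_{n,m}})$, as well as the compatibility between the Thom transformation $\Sigma^{\oocatname{U_n}}$ defined directly on the stack via the six-functor formalism and the levelwise Thom constructions. Both compatibilities follow from \cite{Motivic_Vistoli} (for the comparison of cohomology on $\mc BSL_n$ and its geometric approximation) and from the standard fact that Thom transformations are stable under smooth base change, so in the end the proof is a routine assembly. Naturality of $\tau$ in the input data is automatic from the naturality of each $\tau_m$.
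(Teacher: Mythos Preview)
Your approach is correct but differs from the paper's. You build $\tau$ as the homotopy limit of the levelwise Thom isomorphisms $\tau_m$ on the finite approximations $B_mSL_n$, and then invoke the comparison of $\mc BSL_n$ with its geometric approximation. The paper instead constructs a single Thom class $\mr{th}_{\oocatname{U}_n}\in\mb E^{2n,n}(\Th{\mc BSL_n}{\oocatname{U}_n})$ directly on the stack (using the identification of $\pi_{\mc BSL_n\#}\Th{\mc BSL_n}{\oocatname{U}_n}$ with $\mr{MSL}_n$ via $\beta_\infty$ and then the orientation map $\varphi\colon\mr{MSL}\to\mb E$), defines $\tau(x)=\mr{th}_{\oocatname{U}_n}\cup p^*x$, and checks that $\tau$ is an equivalence by a conservativity argument: since $SL_n$ is special, the atlas $a\colon S\to\mc BSL_n$ has $a^*$ conservative on $\mr{SH}(\mc BSL_n)$, and $a^*\tau$ is just the Thom isomorphism for the trivial bundle $\A^n_S$. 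Your route is more hands-on and stays close to the scheme-level definition of $SL$-orientation; the paper's route is more intrinsically stacky and avoids any limit bookkeeping, at the cost of invoking the conservativity lemma from \cite{Chowdhury24}.

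One small clarification: your worry about $\lim^1$ obstructions is misplaced here. The statement is an equivalence of \emph{mapping spectra}, and a homotopy limit of equivalences of spectra is always an equivalence; no Mittag--Leffler condition is needed. The only genuine input from \cite{Motivic_Vistoli} is the identification $\mb E(\Th{\mc BSL_n}{\oocatname{U}_n})\simeq\holim_m\mb E(\Th{B_mSL_n}{U_{n,m}})$, which you correctly cite.
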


\begin{proof}
First of all we need to construct the map $ \tau $ and then we will prove it is indeed an isomorphism. Let $ SGr_S(j,k) $ be the special linear Grassmannian, with tautological bundle $ \oocatname{T}(j,k) $. For each double index $ (j,k) $, we have natural maps $ \sigma_{j,k}; SGr_S(j,k)\rightarrow \mc BSL_n $ classifying the tautological bundles, that is, we have cartesian squares:
\begin{center}
	\begin{tikzpicture}[baseline={(0,0)}, scale=1.5]
		\node (a) at (0,1) {$  \oocatname{T}(j,k) $};
		\node (b) at (2, 1) {$ \oocatname{U_n} $};
		\node (c)  at (0,0) {$  SGr_S(j,k) $};
		\node (d) at (2,0) {$ \mc BSL_n $};
		\node (e) at (0.2,0.758) {$ \ulcorner $};
		\node (f) at (0.5,0.5) {$  $};

		\path[font=\scriptsize,>= angle 90]
		
		(a) edge [->] node [above ] {$  $} (b)
		(a) edge [->] node [left] {$  $} (c)
		(b) edge[->] node [right] {$  $} (d)
		(c) edge [->] node [below] {$ \sigma_{j,k} $} (d);
	\end{tikzpicture}
\end{center}
By \cite[Proposition 3.33]{Motivic_Vistoli}, we have a natural equivalence:
\[ \beta_{\infty}:  \colim{m} \pi_{SGr_S(n,m)} \epfs \Th{SGr_S(n,m)}{\oocatname{T}(n,m)}\stackrel{\sim}{\rightarrow} \pi_{\mc BSL_n}\epfs \Th{\mc BSL_n}{\oocatname{U_n}} \]
But the left hand side is by definition $ \mr{MSL}_n:=\colim{m} \pi_{SGr_S(n,m)} \epfs \Th{SGr_S(n,m)}{\oocatname{T}(n,m)}   $ as defined in \cite[\S 4]{Panin-Walter_MSL_MSp}. By construction of $ \mr{MSL} $ as a spectrum (cf. \cite[\S 4]{Panin-Walter_MSL_MSp}), we have a natural maps:
\[ u_n: \Sigma^{-2n,-n}\mr{MSL}_n \rightarrow \mr{MSL} \]
By \cite[Theorem 5.9]{Panin-Walter_MSL_MSp} we have a map of motivic spectra $ \varphi: \mr{MSL}\rightarrow \mb E $. Consider the following composition of maps:

\[ \mr{th}_{\oocatname{U}_n}:=\Sigma^{2n,n}(\varphi\circ u_n)\circ \beta_{\infty}^{-1}: \pi_{\mc BSL_n}\epfs \Th{\mc BSL_n}{\oocatname{U_n}}\longrightarrow \Sigma^{2n,n}\mb E \]

This means that we have an element $ \mr{th}_{\oocatname{U}_n}\in \mb E^{2n,n}(\Th{\mc BSL_n}{\oocatname{U}_n}) $. Let $ p_n: \oocatname{U}_n\rightarrow \mc BSL_n $ be the projection map and let us finally define the map we are looking for:
\[ \begin{array}{cccc}
	\tau: & \mb E^{\bullet,\bullet}(\mc BSL_n) & \longrightarrow & \mb E^{\bullet+2n,\bullet+n}(\Th{\mc BSL_n}{\oocatname{U}_n})\\
	& x & \mapsto & \mr{th}_{\oocatname{U}_n}\cup p^*x
\end{array} \]

Rewriting $ \mb E(\mc BSL_n) $ and $ \Sigma^{2n,n}\mb E(\Th{\mc BSL_n}{\oocatname{U_n}}) $   in terms of mapping spectra, we want to show that the map:
\[ \tau: \iMap_{\mr{SH}(\mc BSL_n)}(\mbbm 1_{\mc BSL_n}, \pi_{\mc BSL_n}^*\mb E)\longrightarrow \iMap_{\mr{SH}(\mc BSL_n)}(\Th{\mc BSL_n}{\oocatname{U}_n}, \Sigma^{2n,n}\pi_{\mc BSL_n}^*\mb E)   \]

\noindent is indeed an equivalence. Since $ SL_n $ is special, we can take $ a: S \rightarrow \mc BSL_n $ as our NL-atlas. By \cite[Lemma 4.1.1]{Chowdhury24}, the map $ a^*: \mr{SH}(\mc BSL_n)\rightarrow \mr{SH}(S) $ is conservative, hence $ \tau $ is an equivalence if and only if $ a^*\tau $ is an equivalence. Notice that $ a^*\oocatname{U}_n\simeq \A^n_S $ and denote by $ q_n:=a^*p_n: \A^n_S\rightarrow S $ the projection map. Then we have:
\[ a^*\tau(-)=a^*\mr{th}_{\oocatname{U}_n}\cup q_n^*-=\mr{th}_{\A^n_S}\cup q_n^*- \]
But this means that $ a^*\tau $ is the Thom isomorphism map associated to $ \A^n_S $, so it is indeed an equivalence as we wanted to show. 
\end{proof}

\begin{defn}
We define the \textit{canonical} Thom class of $ \oocatname{U_n} \rightarrow \mc BSL_n $ as the element:
\[ \mr{th}(\oocatname{U_n}):=\tau(1_{\mc BSL_n}) \in \mb E^{2n,n}(\Th{\mc BSL_n}{\mc U_n}) \]
\noindent where $ 1_{\mc BSL_n} \in\mb E^{0,0}(\mc BSL_n) $ is the identity element in the $ \mb E $-cohomology of $ \mc BSL_n $.
\end{defn}

Now, let $ \mc X \in \oocatname{ASt}_{\bigslant{}{S}}^{NL} $ be a NL-stack. Let $ v:  V \rightarrow \mc X $ be a vector bundle of rank $ n $ with trivialised determinant. The vector bundle $  V $ is classified by a map $ f_V $ such that:
\begin{center}
\begin{tikzpicture}[baseline={(0,0)}, scale=1.5]
	\node (a) at (0,1) {$  V $};
	\node (b) at (2, 1) {$ \oocatname{U_n} $};
	\node (c)  at (0,0) {$  \mc X $};
	\node (d) at (2,0) {$ \mc BSL_n $};
	\node (e) at (0.2,0.8) {$ \ulcorner $};
	\node (f) at (0.5,0.5) {$  $};

	\path[font=\scriptsize,>= angle 90]
	
	(a) edge [->] node [above ] {$  $} (b)
	(a) edge [->] node [left] {$  $} (c)
	(b) edge[->] node [right] {$  $} (d)
	(c) edge [->] node [below] {$ f_{ V} $} (d);
\end{tikzpicture}
\end{center}

\begin{defn}
We define the Thom class of the special linear vector bundle $  V \rightarrow \mc X $ with values in a $ SL $-oriented ring spectrum $ \mb E\in \mr{SH}(S) $ as:
\[ \mr{th}({ V}):= f_{ V}^*\mr{th}({\oocatname{U_n}})\in \mb E^{2n,n}\left(  \Th{\mc X}{ V} \right) \]
\noindent where $ f_{ V}: \mc X \rightarrow \mc BSL_n $ is the map classifying the special linear bundle $  V $.
\end{defn}

As we did for schemes, once we have Thom classes for vector bundles with trivialised determinants, we can define Thom classes for general vector bundles living in twisted cohomology. Indeed if $  V\rightarrow \mc X $ is a vector bundle of rank $ n $ with determinant $  L:=\det( V) $, then $  V \oplus  L^{-1} $ is a vector bundle of rank $ n+1 $ with trivial determinant. We can define:

\begin{defn}
The Thom class for $  V $ in $  L^{-1} $-twisted $ \mb E $-cohomology is the element:
\[ \mr{th}_{ V}:=\mr{th}_{ V\oplus  L^{-1}}\in \mb E^{2n,n}(\Th{ \mc X}{ V};  L^{-1}):=\mb E^{2n+2,n+1}\left( \Th{\mc X}{ V\oplus  L^{-1}} \right) \]
\end{defn}

\begin{defn}
In the same notation as above, we defined the $ \mb E $-valued Euler class of a vector bundle $  V \rightarrow \mc X $ of rank $ n $ as:
\[ e( V):=s^*\mr{th}_{ V} \in \mb E^{2n, n}(X;  L^{-1}):=\mb E^{2n+2,n+1}(\Th{\mc X}{ L^{-1}}) \]
\noindent where $ s^* $ is the pullback map induced by the zero section $ s_0: \mc X \rightarrow  V $.
\end{defn}

\section{$ SL_{\eta} $-Theories on $ BGL_n $}

Recall from \cite[\S4]{Motivic_Euler_Char} that for any $ SL $-oriented ring spectrum $ \mb E $ we have a map:
\begin{equation}\label{ch3:_eq_Marc_map}
	\pi^*: \mr A^{\bullet}\left( {B}GL_n \right) \oplus \mr A^{\bullet}\left( {B}GL_n; det(E_n) \right) \longrightarrow \mr A^{\bullet}\left( {B}SL_n \right) 
\end{equation}

Indeed we have the pullback map:
\[ \pi^*_0: \mr A^{\bullet}\left( {B}GL_n \right)\longrightarrow \mr A^{\bullet}\left( {B}SL_n \right)   \]
\noindent induced by $ \pi_0:  BSL_n \longrightarrow  BGL_n $. Now consider the tautological rank $ n $ vector bundle $ E_n \rightarrow  BGL_n $ with $ \det(E_n)=\mo{}(1) $, then its pullback $ \pi_0^*E_n $ will be the tautological special linear bundle over $  BSL_n $, so we have a canonical trivialization $ \theta: \det\left( \pi_0^*E_n \right)\simeq \pi_0^*\mo{}(1)\stackrel{\sim}{\rightarrow} \mo{ BSL_n} $. Composing the pullback map on the twisted theories with $ \theta_*:=(\tau_{\pi_0^*\mo{}(1)})^{-1} $,  given by the inverse Thom isomorphism (cf. \ref{ch2:_Def_Thom_iso}), we get:

\[ \mr A^{\bullet}\left( {B}GL_n; \mo{}(1) \right)\stackrel{\pi_0^*}{\longrightarrow} \mr A^{\bullet}\left( {B}SL_n ; \pi_0^*\mo{}(1)\right) \stackrel{\theta_*}{\longrightarrow} \mr A^{\bullet}\left( {B}SL_n \right)   \]
\noindent and we denote this map as $ \pi_1^*:=\theta_* \circ \pi_0^* $. Then putting together $ \pi_0^* $ and $ \pi_1^* $ we get our desired $ \pi^* $.

We would like to reduce the computations of characteristic classes from general vector bundles to special linear ones. With a minor adaptation of the arguments in \cite[Proposition 4.1]{Motivic_Euler_Char} we can prove the following:

\begin{pr}\label{ch3:_4.1_MEC}
	Let $ S\in \catname{Sm}_{\bigslant{}{\sk}} $. Let $ E_n $ be the universal tautological bundle of $ BGL_n $, and let $ \mr A\in \mr{SH}(S) $ be an $ SL $-oriented ring spectrum. Then the map \eqref{ch3:_eq_Marc_map}:
	\[ \pi^*: \mr A^{\bullet}\left( {B}GL_n \right) \oplus \mr A^{\bullet}\left( {B}GL_n; det(E_n) \right) \longrightarrow \mr A^{\bullet}\left( {B}SL_n \right)  \]
	\noindent is an isomorphism. In particular we have:
	\begin{enumerate}
		\item [($ n=2m $)]  \[ \pi^*\left( \mr A^{\bullet}\left( {B}GL_n \right) \right)\simeq \mr A^{\bullet}(S)\llbracket p_1,\ldots, p_{m-1} , e^2 \rrbracket  \]
		\[ \pi^*\left( \mr A^{\bullet}\left( {B}GL_n; \det(E_n) \right) \right)\simeq e\cdot \mr A^{\bullet}(S)\llbracket p_1,\ldots, p_m , e^2 \rrbracket  \]
		\noindent where $ p_i=p_i(\oocatname{T}(n,\infty)) $ are the Pontryagin classes (cf. \cite[Theorem 10]{Ananyevskiy_PhD_Thesis}) of to the tautological bundle $ \oocatname{T}(n,\infty) $ over $ BSL_n $ and $ e=e(\oocatname{T}(n,\infty)) $ is the Euler class of the tautological bundle.
		\item [($ n=2m+1 $)] Then $ \mr A^{\bullet}\left( {B}GL_n; \det(E_n)\right) \simeq 0 $ and:
		\[ \pi^*\left( \mr A^{\bullet}\left( {B}GL_n \right) \right)\stackrel{}{\longrightarrow} \mr A^{\bullet}\left( {B}SL_n \right)   \]
		\noindent is an isomorphism.
		
	\end{enumerate}
	
\end{pr}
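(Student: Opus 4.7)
The plan is to adapt the Gysin-sequence argument of \cite[Proposition 4.1]{Motivic_Euler_Char} to an arbitrary $SL_\eta$-oriented ring spectrum, substituting the twisted Thom isomorphism of Proposition \ref{ch2:_Twisted_Thom_SL} for the Witt-sheaf-specific tools used in \emph{loc.\ cit.} The geometric input is that $\pi_0\colon BSL_n \to BGL_n$ is a $\mb G_m$-torsor, realized concretely as the complement of the zero section of $L := \det(E_n) \to BGL_n$. Feeding the closed/open decomposition $s_0\colon BGL_n \hookrightarrow L \hookleftarrow BSL_n$ into the localization fiber sequence and combining it with homotopy invariance and the twisted Thom isomorphism, one obtains (in the single-grading convention legitimized by $\eta$-invertibility) a long exact sequence
\begin{equation*}
\cdots \to \mr A^{\bullet-1}(BGL_n; L) \xrightarrow{\cup e(L)} \mr A^{\bullet}(BGL_n) \xrightarrow{\pi_0^*} \mr A^{\bullet}(BSL_n) \xrightarrow{\partial} \mr A^{\bullet}(BGL_n; L) \to \cdots.
\end{equation*}

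The canonical trivialization $\theta\colon\pi_0^*L\stackrel{\sim}{\to}\mo{BSL_n}$ used to define $\pi_1^*=\theta_\ast\circ\pi_0^*$ provides, by construction, a section of $\partial$; dually, vanishing of $\cup e(L)$ makes $\pi_0^*$ injective. These two facts together split the sequence and identify $\pi^*=(\pi_0^*,\pi_1^*)$ as an isomorphism. For the explicit descriptions I would then match the two summands against Ananyevskiy's computation of $\mr A^{\bullet}(BSL_n)$ in \cite[Theorem 10]{Ananyevskiy_PhD_Thesis}: for $n = 2m$, the Pontryagin classes $p_1,\ldots,p_{m-1}$ and $e^2$ descend to $BGL_n$ as characteristic classes of $E_n$ itself and thus lie in $\pi_0^*(\mr A^\bullet(BGL_n))$, whereas the Euler class $e$ is forced into the twisted summand $\pi_1^*(\mr A^\bullet(BGL_n;L))$, whose module structure over $\mr A^\bullet(S)\llbracket p_1,\ldots,p_m,e^2\rrbracket$ comes from cup product; for $n = 2m+1$ the target $\mr A^\bullet(S)\llbracket p_1,\ldots,p_m \rrbracket$ is already entirely realized by $\pi_0^*$, which forces $\mr A^\bullet(BGL_n;L) = 0$.

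The main obstacle is verifying that $\cup e(L)$ is the zero map, i.e.\ that the Euler class of the tautological line bundle $\det E_n$ acts trivially on the $L$-twisted cohomology. I expect this to follow from a splitting-principle reduction to a projective-bundle computation, where cup product with $e(\mo{}(1))$ on $\pro^\infty$ can be controlled using $\eta$-invertibility and the Ananyevskiy vanishing results for odd-rank bundles; alternatively one may argue indirectly by a graded dimension count, comparing $\mr A^\bullet(BSL_n)$ as computed by Ananyevskiy with the candidate direct sum and checking that $\pi_0^*\oplus\pi_1^*$ is surjective with no excess kernel. Either way, carefully tracking the line-bundle twists through the $SL_\eta$-oriented splitting principle is where the real technical care is needed.
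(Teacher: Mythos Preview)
Your overall architecture matches the paper's proof: realize $\pi_0$ as the complement of the zero section in $L=\det(E_n)$, write the localization/Gysin sequence, split it, and then read off the two summands against Ananyevskiy's description of $\mr A^\bullet(BSL_n)$. However, you have the two sub-problems weighted exactly backwards.

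The step you flag as the ``main obstacle'', namely $e(L)\cup{-}=0$, is in fact a one-line citation: for any $SL_\eta$-oriented spectrum the Euler class of a line bundle acts by zero, by \cite[Lemma~4.3]{Motivic_Euler_Char}. No splitting principle or dimension count is needed. Conversely, the step you dismiss as ``by construction'' --- that $\pi_1^*=\theta_*\circ\pi_0^*$ furnishes a section of $\partial$ --- is precisely where the work lies, and it is \emph{not} automatic. What one actually proves is that $\partial\circ\pi_1^*$ is multiplication by a fixed element, namely $\bar\partial(\langle t_{can}\rangle)\in\mr A^{-1,-1}(BGL_n)$, where $\langle t_{can}\rangle$ is the symbol of the tautological section of $L$ (cf.\ Construction~\ref{ch2:_tautological_symbol}). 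One then checks this element is invertible by a Mayer--Vietoris argument on the finite approximations $B_mGL_n$: on any open where $L$ trivializes, $\bar\partial(\langle t_{can}\rangle)$ restricts to $\eta$ by \cite[Lemma~6.4]{Ananyevskiy_SL_oriented}, and $\eta$ is a unit by hypothesis. So $\eta$-invertibility enters twice --- once to kill $e(L)$ and once to invert the boundary --- and the second occurrence is the one that needs to be argued rather than asserted.
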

\begin{proof}
	Let us consider the line bundle $ q:\mo{}(1):=\det(E_n) \longrightarrow {B}GL_n $. We can identify the map $ \pi_0:{B}SL_n \longrightarrow {B}GL_n $ with the $ \mb G_m $-principal bundle $ q\circ j:\mo{}(1)\setminus \set{0} \longrightarrow {B}GL_n $ where $ j: \mo{}(1) \setminus \set{0}\into \mo{}(1) $. By homotopy invariance we can also identify $ \mr A({B}GL_n)\simeq \mr A(\mo{}(1)) $, thus we have the localization sequences:
	\begin{align}\label{ch3:_eq_BGL_Untwisted}
		\begin{split}
			\ldots \rightarrow \mr A^{a,b}\left( {B}GL_n \right)  \stackrel{}{\rightarrow} &\mr A^{a,b}\left( {B}SL_n \right) -\!\!\!-\!\!\!-\ldots  \\
			\ldots&\stackrel{\partial_{a,b}}{\rightarrow} \mr A^{a-1,b-1}\left( {B}GL_n; det(E_n) \right)  \stackrel{e(\det(E_n))\cup \cdot}{\rightarrow} \mr A^{a+1,b}\left( {B}GL_n \right)  \rightarrow \ldots
		\end{split}
	\end{align}
	\begin{equation}
		\ldots \rightarrow \mr A^{a,b}\left( {B}GL_n; \mo{}(-1) \right)  \stackrel{}{\rightarrow} \mr A^{a,b}\left( {B}SL_n; \pi_0^*\mo{}(-1)\right)  \stackrel{\bar\partial_{a,b}}{\rightarrow} \mr A^{a-1,b-1}\left( {B}GL_n\right)  \stackrel{}{\rightarrow} \ldots 
	\end{equation}

	\noindent By \cite[lemma 4.3]{Motivic_Euler_Char} the cup product with $ e(L) $ for any line bundle $ L $ is zero in any $ \eta $-inverted $ SL $-oriented ring spectrum, so the long exact sequence \eqref{ch3:_eq_BGL_Untwisted} splits in short exact sequences: 
	
	\[ 0 \rightarrow \mr A^{a,b}\left( {B}GL_n \right)  \stackrel{}{\rightarrow} \mr A^{a,b}\left( {B}SL_n \right)  \stackrel{\partial_{a,b}}{\rightarrow} \mr A^{a-1,b-1}\left( {B}GL_n; det(E_n) \right)  \rightarrow 0 \]
	
	\noindent We want now to find a splitting for $ \partial $ and we already have a natural candidate $ \pi_1^* $, in the notation used above the proposition. Let us consider the tautological section $ can: \mo{}(1)\rightarrow q^*\mo{}(1) $. Using \cref{ch2:_symbol_of_line_bundle_section}, the tautological section defines an element $ \langle t_{can} \rangle \in \mr A^{0,0}({B}SL_n, \pi^*_0\mo{}(1))\simeq  \mr A^{0,0}({B}SL_n, \pi^*_0\mo{}(-1)) $. The map $ \pi_1^* $ we constructed before was:
	
	\[ \mr A^{\bullet}\left( {B}GL_n; \mo{}(1) \right)\stackrel{\pi_0^*}{\longrightarrow} \mr A^{\bullet}\left( {B}SL_n ; \pi_0^*\mo{}(1)\right) \stackrel{\theta_*}{\longrightarrow} \mr A^{\bullet}\left( {B}SL_n \right)  \]
	
	\noindent but $ \theta_* $ is just multiplication by $ \langle t_{can} \rangle $.  To show that the short exact sequence actually splits we need to prove that $ \partial \circ \pi_1^*$ is an isomorphism. As a first step we claim that for any $ x \in \mr A^{\bullet}({B}GL_n; \mo{}(1)) $ we have:
	\[ \partial(\pi_1^*(x))=\bar\partial(\langle t_{can} \rangle)\cup x \]
	\noindent Indeed:
	\[ \partial(\pi_1^*(x))=\partial( \langle t_{can} \rangle \cup \pi_0^*(x) )= \partial( \langle t_{can} \rangle \cup q^*j^*(x) )  \]
	\noindent and $ \partial $ is $ \mr A^{\bullet}({B}GL_n; \mo{}(1)) $-linear, with $ \mr A^{\bullet}({B}GL_n; \mo{}(1))  $ acting on $ E^{\bullet}({B}SL_n)  $ via multiplication through $ q^*j^* $ and on $ E^{\bullet}({B}GL_n; \mo{}(1))  $ just via multiplication, and this proves the claim. \\
	We want now to prove that multiplication by $ \bar\partial(\langle t_{can} \rangle) \in \mr A^{-1,-1}({B}GL_n) $ is an isomorphism:
	\[ \bar \partial(\langle t_{can} \rangle) \cup \cdot : \mr A^{a,b}({B}GL_n; \mo{}(1)) \longrightarrow \mr A^{a-1,b-1}({B}GL_n; \mo{}(1)) \]
	Using a Mayer-Vietoris argument on the finite approximation pieces $ {B}_mGL_n $ we can reduce ourselves to a local computation, where we already know the result by \cite[Lemma 6.4]{Ananyevskiy_SL_oriented}. Indeed given two open sets $ U, V $ of $ {B}_mGL_n $, we get a Nisnevich excision square related to $ \set{U,V, U\cup V} $ and a Mayer-Vietoris exact sequence for any Nisnevich sheaf. In particular multiplication by an element $ u \in \mr A^{-1,-1}({B}_mGL_n) $ gives us a map of Mayer-Vietoris sequences:
	
	\begin{center}
		\begin{tikzpicture}[baseline={(0,0)}, scale=2]
			\node (a5) at (0,5) {$ \ldots $};
			\node (b5) at (4, 5) {$ \ldots $};
			
			\node (a4)  at (0,4) {$ \mr A^{a,b}(U\cup V; \mo{}(1)) $};
			\node (b4) at (4,4) {$  \mr A^{a-1,b-1}(U\cup V; \mo{}(1))  $};
			
			\node (a3) at (0,3) {$ \mr A^{a,b}(U; \mo{}(1)) \oplus \mr A^{a,b}(V; \mo{}(1))  $};
			\node (b3) at (4,3) {$ \mr A^{a-1,b-1}(U; \mo{}(1)) \oplus \mr A^{a-1,b-1}(V; \mo{}(1))   $};
			
			\node (a2) at (0,2) {$ \mr A^{a,b}(U\cap V; \mo{}(1))  $};
			\node (b2) at (4,2) {$ \mr A^{a-1,b-1}(U\cap V; \mo{}(1))  $};
			
			\node (a1) at (0,1) {$ \mr A^{a+1,b}(U\cup V; \mo{}(1))  $};
			\node (b1) at (4,1) {$ \mr A^{a,b-1}(U\cup V; \mo{}(1))  $};
			
			\node (a0) at (0,0) {$ \ldots $};
			\node (b0) at (4,0) {$ \ldots  $};

			\path[font=\scriptsize,>= angle 90]
			
			
			(a5) edge [->] node [left ] {$  $} (a4)
			(a4) edge [->] node [left] {$  $} (a3)
			(a3) edge [->] node [left] {$  $} (a2)
			(a2) edge [->] node [left] {$  $} (a1)
			(a1) edge [->] node [left] {$  $} (a0)
			(b5) edge [->] node [right ] {$  $} (b4)
			(b4) edge [->] node [right] {$  $} (b3)
			(b3) edge [->] node [right] {$  $} (b2)
			(b2) edge [->] node [right] {$  $} (b1)
			(b1) edge [->] node [right] {$  $} (b0)
			
			(a4) edge [->] node [above ] {$ u \cup \cdot $} (b4)
			(a3) edge [->] node [above] {$ (u\cup \cdot, u\cup \cdot ) $} (b3)
			(a2) edge[->] node [above] {$ u\cup \cdot $} (b2)
			(a1) edge [->] node [above] {$ u\cup \cdot $} (b1);
		\end{tikzpicture}
	\end{center}
	So to show that multiplication by $ \bar\partial(\langle t_{can}\rangle) $ is an isomorphism, it is enough to show that $ \bar\partial(\langle t_{can}\rangle)  $ restricts to an invertible element when passing to open sets $ U_i $ covering $ {B}_mGL_n $. But in local coordinates $\bar \partial(\langle t_{can}\rangle)  $ restricts to $ \eta $ by \cite[Lemma 6.4]{Ananyevskiy_SL_oriented} and thus we get our desired splitting for $ B_mGL_n $. This implies that for any $ k $ we have:
	\[ \pi_m^*: \mr A^{k}(B_mGL_n)\oplus \mr A^{k}(B_mGL_n; \mo{}(1))\stackrel{\sim}{\longrightarrow} \mr A^{k}(B_mSL_n) \]
	\noindent where $ \pi_m^* $ is the map induced by $ \pi^* $ at the finite level approximations. Since the isomorphism holds for any $ k $, we have an equivalence of mapping spectra:
	
	\[ \pi^*_m:  \mr A(B_mGL_n)\oplus \mr A^{}(B_mGL_n; \mo{}(1))\stackrel{\sim}{\longrightarrow} \mr A^{}(B_mSL_n)  \]
	\noindent where $ \mr A(B_mG)=\iMap(\Sigma^{\infty} B_mG, \mr A) $ (and similarly for the twisted version). But since $ \mr A(BG)=\iMap(BG, \mr A)=\lim_m \iMap(B_mG, \mr A) $, from the equivalence of mapping spectra $ \pi_m^* $, we get:
	\[ \pi^*: \mr A(BGL_n)\oplus \mr A^{}(BGL_n; \mo{}(1))\stackrel{\sim}{\longrightarrow} \mr A^{}(BSL_n)  \]
	\noindent again at the level of mapping spectra, and this gives us our claim.
	
	For the explicit presentation of the image of $ \pi^* $ the same argument as in the proof of \cite[Proposition 4.1]{Motivic_Euler_Char} will give us the result, using the statement of \cite[Theorem 10]{Ananyevskiy_PhD_Thesis} generalised over any smooth $ \sk $-scheme $ S $ in \cref{ch3:_Kunneth_BSL_n} (we will actually generalised it to smooth NL-stacks).
	
\end{proof}

\begin{rmk}
	The previous proposition was already proved independently in \cite[Remark 6.3.7]{Haution_Odd_VB} using stronger results. Indeed in \textit{loc. cit.} it is proved that $ BGL_{2r}\simeq BGL_{2r+1} $ and $ BGL_{2r+1}\simeq BSL_{2r+1} $ in $ \mr{SH}(S)[\eta^{-1}] $ (this are respectively \cite[Theorem 6.3.3, Theorem 6.3.6]{Haution_Odd_VB}). 
\end{rmk}

The proof of \cref{ch3:_4.1_MEC}, used as a key input that $ BSL_n $ can be seen as the complement of the zero section of the determinant tautological bundle of $ BGL_n $. And that is all we actually need: the same proof goes through verbatim if we consider $ BN \rightarrow BN_G $ where $ N_G $ is the normaliser of the torus inside $ GL_2 $. The model for $ BN_G $ is given by $\left( \quot{GL_2}{N_G}\right)\times^{GL_2} EGL_2 $, where again $ \quot{GL_2}{N_G}\simeq \quot{SL_2}{N}\simeq \pro^2\setminus C $. We then have:

\begin{pr}\label{ch3:_Norm_GL}
	Let $ S\in \catname{Sm}_{\bigslant{}{\sk}} $. For any $ SL_{\eta} $-oriented ring spectrum $ \mr A\in \mr{SH}(S) $ we have an induced isomorphism:
	\[ \pi^*: \mr A^{\bullet}\left( {B}N_G \right) \oplus \mr A^{\bullet}\left( {B}N_G; \mo{}(1)\right) \longrightarrow \mr A^{\bullet}\left( {B}N \right)  \]
\end{pr}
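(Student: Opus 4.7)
The plan is to repeat the proof of Proposition \ref{ch3:_4.1_MEC} essentially word-for-word, substituting the pair $BSL_n \to BGL_n$ with the pair $BN \to BN_G$. The key geometric input (which is precisely the hint the author already flagged) is that, via the identifications $\quot{GL_2}{N_G}\simeq \quot{SL_2}{N}\simeq \pro^2\setminus C$, the map $\pi: BN \to BN_G$ is realized as the complement of the zero section of a line bundle $q: \mo{}(1)\to BN_G$, namely the one associated to the determinant character $N_G\hookrightarrow GL_2\stackrel{\det}{\to}\mb G_m$. Thus $\pi$ factors as $q\circ j$ with $j:\mo{}(1)\setminus\{0\}\hookrightarrow \mo{}(1)$, and by homotopy invariance $\mr A(BN_G)\simeq \mr A(\mo{}(1))$.

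Having this in hand, I would write down the two localization long exact sequences for the open-closed decomposition, exactly analogous to the pair of sequences displayed in the proof of \Cref{ch3:_4.1_MEC}: one untwisted and one twisted by $\mo{}(1)$. By \cite[Lemma 4.3]{Motivic_Euler_Char}, cup product with $e(L)$ vanishes in any $\eta$-inverted $SL$-oriented ring spectrum for every line bundle $L$, so both long exact sequences break into short exact sequences. The candidate splitting is provided by $\pi_1^*$, defined as the composition of pullback along $\pi_0$ with multiplication by the tautological symbol $\langle t_{can}\rangle$ (in the sense of \Cref{ch2:_symbol_of_line_bundle_section}) associated to the tautological section of $q^*\mo{}(1)$.

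To show $\partial\circ \pi_1^*$ is an isomorphism, I would argue exactly as in \Cref{ch3:_4.1_MEC}: using the module structure of the boundary $\partial$ over $\mr A^{\bullet}(BN_G;\mo{}(1))$, this composition is multiplication by $\bar\partial(\langle t_{can}\rangle)\in \mr A^{-1,-1}(BN_G)$. Verifying this multiplication is an isomorphism reduces, via a Mayer-Vietoris argument on finite approximations $B_mN_G$, to a local computation in which $\bar\partial(\langle t_{can}\rangle)$ restricts to $\eta$, invertible by the $\eta$-inversion hypothesis (cf. \cite[Lemma 6.4]{Ananyevskiy_SL_oriented}). Assembling the resulting equivalences at the finite level into mapping spectra and taking the limit $m\to \infty$ (which commutes with $\iMap(-,\mr A)$) produces the claimed isomorphism.

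The main potential obstacle is a bookkeeping one: identifying the correct line bundle $\mo{}(1)$ on $BN_G$ (i.e.\ checking that the tautological section and its symbol match the structure used in the $GL_n/SL_n$ argument) and confirming that the finite approximations $B_mN_G$ are smooth enough to support both the Mayer-Vietoris cover and Ananyevskiy's local calculation. Neither is a real difficulty because everything in the $BGL_n$ argument is intrinsic to the fact that we have a $\mb G_m$-principal bundle presentation of $\pi$; no property specific to the determinant bundle of $GL_n$ is used. Thus the argument transports directly and yields the proposition.
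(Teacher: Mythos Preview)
Your proposal is correct and follows essentially the same approach as the paper's own proof, which simply instructs the reader to repeat the argument of \Cref{ch3:_4.1_MEC} verbatim with $BSL_n$ replaced by $BN$ and $BGL_n$ by $BN_G$, pulling back the relevant structures along $BN\to BSL_2$ and $BN_G\to BGL_2$. Your write-up is in fact more detailed than the paper's, spelling out explicitly the identification of $\pi$ with the complement of the zero section of the determinant line bundle, the localization sequences, the splitting via the tautological symbol, and the passage to the limit over finite approximations.
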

\begin{proof}
	Use the same exact proof as in \cref{ch3:_4.1_MEC}, replacing $ BSL_n $ with $ BN $ and $ BGL_n $ with $ BN_G $ and pulling back the appropriate maps along the morphisms $ BN \rightarrow BSL_2 $ and $ BN_G \rightarrow BGL_2 $ induced by $ N\into SL_2 $ and $ N_G \into GL_2 $ respectively.
\end{proof}
\begin{rmk}
	The previous proposition was also proved in \cite[Proposition 2.5.10]{Viergever_PhD} under the assumption that the unit map of $ \mr A $ makes the $ \mr A^{\bullet}(-) $-cohomology into a module for the sheaf of Witt groups $ H\mc W $.
\end{rmk}
\begin{rmk}
	As a corollary of \cref{ch3:_Norm_GL}, it is not hard to get an additive description for $ \mr A^{\bullet}(BN_G) $, using \cite[Proposition 3.18]{Witt_Loc_PhD}, and a multiplicative description of $ \mr{KW}^{\bullet}(BN_G) $ using \cite[Corollary 3.26]{Witt_Loc_PhD}. We are very  grateful to A.\! Viergever that made us realise we could improve and apply our results to the case of $ BN_G $.
\end{rmk}

\section{K\"unneth Formulas}

We want now to prove some K\"unneth formulas for $ \mc BSL_n, \mc BGL_n$. Let us start with an extension of \cite[Theorem 9]{Ananyevskiy_PhD_Thesis}. Notice that using our model for the approximations $ B_mSL_n $ we have that $ B_mSL_n\simeq SGr(n, n+m) $. Since we will work over different base schemes we will denote the special Grassmannian over some scheme $ S $ as $ SGr_{S}(n,k) $.

\begin{pr}\label{ch3:_Ana_Thm_9+}
	Let $ S \in \catname{Sm}_{\bigslant{}{\sk}} $ and let $ \mr A \in \mr{SH}(S) $ be an $ SL_{\eta} $-oriented ring spectrum. Let $ T_1:=\oocatname{T}(2n,2k+1) $ be the tautological bundle of rank $ 2n $ on $ SGr_{\sk}(2n, 2k+1) $, with Pontryagin classes $ p_i(T_1) $. Then there is an isomorphism of $ \mr A^{\bullet}(S) $-algebras:
	\[ \varphi: \quot{\mr A^{\bullet}(S)[p_1,p_2,\ldots, p_k, e]}{J_{2n, 2k+1}} \longrightarrow \mr A^{\bullet}( SGr_S(2n,2k+1))  \]
	\noindent where:
	\[ J_{2n,2k+1}:=\left( e^2-p_n, g_{k-n+1}(p_1,\ldots, p_n), g_{k-n+2}(p_1,\ldots,p_n), \ldots, g_k(p_1,\ldots,p_m) \right) \]
	\noindent is defined with the same polynomials as in \cite[Theorem 9]{Ananyevskiy_PhD_Thesis} and $ \varphi $ is defined by sending $ \varphi(p_i):=p_i(T_1) $ and $ \varphi(e):=e(T_1) $.
\end{pr}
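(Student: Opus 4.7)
The plan is to mimic the proof of \cite[Theorem 9]{Ananyevskiy_PhD_Thesis} but carried out relatively over $S$. The crucial geometric observation is that $SGr_S(2n, 2k+1) = SGr_{\sk}(2n, 2k+1) \times_{\sk} S$ and that $T_1$ on $SGr_S(2n, 2k+1)$ is the pullback of the tautological bundle on $SGr_{\sk}(2n, 2k+1)$ along the projection. Hence every cellular-type decomposition and every characteristic class relation that Ananyevskiy uses over $\sk$ is naturally defined over $S$. In particular, the Pontryagin classes $p_i$ and the Euler class $e$ appearing in $\varphi$ are well-defined elements of $\mr A^{\bullet}(SGr_S(2n, 2k+1))$ and the map $\varphi$ is well-defined as a map of $\mr A^{\bullet}(S)$-algebras.

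To show $\varphi$ is an isomorphism I would proceed by induction on $k$. For the base case $k=n$, the Grassmannian $SGr_S(2n, 2n+1)$ is essentially an affine bundle over a quaternionic-projective-type base, so the result follows from homotopy invariance together with the (relative) symplectic/quaternionic projective bundle formula, which in turn is standard for any $SL_{\eta}$-oriented theory and is natural in the base. For the inductive step, one uses the standard open--closed decomposition of the special Grassmannian: there is an open $U \sseq SGr_S(2n, 2k+1)$ that is a vector bundle over a smaller special Grassmannian $SGr_S(2n, 2k-1)$, with closed complement a projective bundle over a Grassmannian of smaller rank. Associated to this decomposition is a localization long exact sequence in $\mr A^{\bullet}$. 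Since $\mr A$ is $\eta$-invertible, by \cite[Lemma 4.3]{Motivic_Euler_Char} the relevant Euler classes of odd-rank bundles vanish (these are precisely the boundary terms appearing in the localization sequence), so the long exact sequence breaks up into short exact sequences. The inductive hypothesis applied to the smaller Grassmannians, combined with homotopy invariance on the affine-bundle open piece, yields the claimed presentation.

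The splitting of the short exact sequences can be constructed in the same way as in Ananyevskiy's proof: multiplication by appropriate monomials in the Pontryagin classes $p_i(T_1)$ and the Euler class $e(T_1)$ provides explicit sections, and the symbol $\langle t_{can}\rangle$ from \Cref{ch2:_symbol_of_line_bundle_section} gives the comparison across the twist. The relations cutting out the ideal $J_{2n,2k+1}$ --- namely $e^2 - p_n$ and the $g_j(p_1, \ldots, p_n)$ for $j > k-n$ --- arise purely from the tautological short exact sequence on the Grassmannian and from multiplicativity of Pontryagin classes, and these hold for $T_1$ over any base. In particular they hold over $S$.

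The main obstacle, and the place where the generalisation from $\sk$ to $S$ is non-trivial, is verifying that the ranks of the $\mr A^{\bullet}(S)$-modules appearing in the short exact sequences match, i.e.\ that no new relations or generators appear when passing from $\sk$ to a higher-dimensional smooth base. This is ensured by running the induction simultaneously for all smooth $\sk$-schemes $S$, so that at each step the inductive hypothesis supplies a presentation of the relevant $\mr A^{\bullet}(S)$-algebra whose compatibility with pullback along $S' \to S$ is automatic from naturality of the Pontryagin and Euler classes. Once the presentations are known to be free of the correct rank over $\mr A^{\bullet}(S)$, the fact that $\varphi$ is surjective (by construction) combined with a dimension/rank count forces it to be an isomorphism.
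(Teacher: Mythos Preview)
Your approach is plausible in outline but takes a much longer route than the paper, and a few of the details are off.

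The paper does \emph{not} rerun Ananyevskiy's induction over a general base. Instead it invokes the Leray (coniveau) spectral sequence of \cite[Theorem 4.2.9]{Asok-Deglise-Nagel}: for both the source and the target of $\varphi$ there is a spectral sequence whose $E_1$-page is a direct sum over points $s\in S^{(p)}$ of the corresponding object over the residue field $\kappa(s)$. The map $\varphi$ induces a map of spectral sequences which on $E_1$-pages is, pointwise in $s$, exactly the map of \cite[Theorem~9]{Ananyevskiy_PhD_Thesis} over $\kappa(s)$, hence an isomorphism. Convergence then gives that $\varphi$ is an isomorphism. This reduces the relative statement to the known absolute statement over fields in one stroke, with no need to reopen the cell structure or the induction.

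Your strategy of redoing the open--closed decomposition and localization sequences over $S$ could in principle be made to work, since those decompositions are geometric and base change along $SGr_S(2n,2k+1)\to SGr_{\sk}(2n,2k+1)$. But several details in your sketch are inaccurate: the base case $SGr_S(2n,2n+1)$ is not an affine bundle over a quaternionic-projective base --- it is $\A^{2n+1}_S\setminus\{0\}$ (via duality $Gr(2n,2n+1)\simeq\pro^{2n}$ and passage to the determinant $\mb G_m$-torsor); the symbol $\langle t_{can}\rangle$ plays no role here (it is used in the paper for the $BGL_n/BSL_n$ comparison, not for the Grassmannian computation); and the concluding ``rank count'' presupposes that the source is free of the correct rank over $\mr A^{\bullet}(S)$, which is part of what you are trying to establish. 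None of these is fatal, but each would need to be repaired, and by the time you have done so you will have essentially reproduced Ananyevskiy's entire argument in a relative setting --- whereas the spectral sequence trick gets the result from the field case immediately.
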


\begin{proof}
	If $ S=\spec(\sk) $, then this is just \cite[Theorem 9]{Ananyevskiy_PhD_Thesis}. For a general smooth $ \sk $-scheme $ S $, we have a map:
	\[ {\varphi}:\mr A^{\bullet}(S)[p_1,\ldots, e] \longrightarrow \mr A^{\bullet}(SGr_S(2n,2k+1)) \]
	\noindent sending $ p_i $ to $ p_i(T_1) $ and $ e$ to $ e(T_1) $.
	The structure map $ \pi_S: S \rightarrow \spec(\sk) $ induces a pullback map:
	\[ \pi_S^*:\mr A^{\bullet}(SGr_{\sk}(2n,2k+1))\rightarrow \mr A^{\bullet}(SGr_S(2n,2k+1)) \]
	The tautological bundle $ T_1 $ on $ SGr_S(2n,2k+1) $ is the pullback of the tautological bundle $ T_{1, \sk} $ on $ SGr_{\sk}(2n,2k+1) $, hence the Pontryagin and Euler classes of $ T_{1} $ satisfy all the relations in $ J_{2m,2k+1} $ since they hold for the classes of $ T_{1,\sk} $ (by \cite[Theorem 9]{Ananyevskiy_PhD_Thesis}). This implies that $ \varphi $ passes to the quotient, that is, we get a map:
	\[ \varphi: \quot{\mr A^{\bullet}(S)[p_1,p_2,\ldots, p_n, e]}{J_{2n, 2k+1}} \longrightarrow \mr A^{\bullet}( SGr_S(2n,2k+1))  \]
	
	By \cite[Theorem 4.2.9]{Asok-Deglise-Nagel}, we have Leray spectral sequences both for source and target of $ \varphi $:
	\[ E_1^{p,q}=\bigoplus_{s\in S^{(p)}} \quot{\mr A^{q}(\kappa(s))[p_1,p_2,\ldots, p_n, e]}{J_{2n, 2k+1}}  \allora \quot{\mr A^{p+q}(S)[p_1,p_2,\ldots, p_n, e]}{J_{2n, 2k+1}}  \]
	\[ 'E_1^{p,q}=\bigoplus_{s \in S^{(p)}} \mr A^{q}(SGr_{\kappa(s)}(2n,2k+1)) \allora \mr A^{p+q}(SGr_S(2n,2k+1)) \]
	The map between spectral sequences by $ \varphi $ is an isomorphism by \cite[Theorem 9]{Ananyevskiy_PhD_Thesis}, hence $ \varphi $ is an isomorphism too as claimed.
	
\end{proof}

\begin{thm}\label{ch3:_Ana_thm_9_NisLoc}
	Let $ S \in \catname{Sm}_{\bigslant{}{\sk}} $ and let $ \mc X \in \oocatname{ASt}_{\bigslant{}{S}}^{NL} $ be a smooth NL-algebraic stack. Let $ \mr A \in \mr{SH}(S) $ be an $ SL_{\eta} $-oriented ring spectrum. Then we have a map:
	\[ \varphi_{\mc X}: \quot{\mr A^{\bullet}(\mc X)[p_1,p_2,\ldots, p_n, e]}{J_{2n, 2k+1}} \longrightarrow \mr A^{\bullet}( \mc X\times_S SGr_{S}(2n,2k+1))  \]
	\noindent that is an isomorphism, where:
	\[ J_{2n,2k+1}:=\left( e^2-p_n, g_{k-n+1}(p_1,\ldots, p_n), g_{k-n+2}(p_1,\ldots,p_n), \ldots, g_k(p_1,\ldots,p_n) \right) \]
	\noindent and $ \varphi $ is defined by sending $ \varphi(p_i):=p_i(T_1) $ and $ \varphi(e):=e(T_1) $ with $ T_1 $ the tautological bundle of $ SGr_S(2n,2k+1) $.
\end{thm}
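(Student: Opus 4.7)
The plan is to define $\varphi_{\mc X}$ by sending $p_i \mapsto p_i(\pi_2^* T_1)$ and $e \mapsto e(\pi_2^* T_1)$, where $\pi_2 : \mc X \times_S SGr_S(2n,2k+1) \to SGr_S(2n,2k+1)$ is the second projection and $T_1$ is the tautological bundle. This pullback is well defined, and since the cited relations in $J_{2n,2k+1}$ are polynomial identities among $p_i(T_1)$ and $e(T_1)$ that hold already in $\mr A^\bullet(SGr_S(2n,2k+1))$ by \Cref{ch3:_Ana_Thm_9+}, they remain valid after $\pi_2^*$; hence $\varphi_{\mc X}$ descends to the stated quotient and is automatically a map of $\mr A^\bullet(\mc X)$-algebras via pullback along the first projection $\pi_1$.

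To show $\varphi_{\mc X}$ is an isomorphism, I would argue by smooth/Nisnevich descent using the NL structure on $\mc X$. By definition of an NL-stack, there exists a smooth Nisnevich atlas $a : U \to \mc X$ with $U \in \catname{Sm}_{/S}$ a scheme, and the six-functor formalism of \cite{ChoDA24} ensures that the cohomology presheaf $\mr A^\bullet(-)$ satisfies descent along the \v{C}ech nerve $U^\bullet$. Concretely,
\[
\mr A^\bullet(\mc X) \simeq \lim_{[n] \in \Delta} \mr A^\bullet(U^n), \qquad \mr A^\bullet(\mc X \times_S SGr_S(2n,2k+1)) \simeq \lim_{[n] \in \Delta} \mr A^\bullet(U^n \times_S SGr_S(2n,2k+1)),
\]
and by base change the right hand limit is computed along the cosimplicial scheme $U^\bullet \times_S SGr_S(2n,2k+1)$.

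At each simplicial level $U^n$ is a smooth $\sk$-scheme, so \Cref{ch3:_Ana_Thm_9+} applies and gives a natural isomorphism $\varphi_{U^n}$. The source of $\varphi_{U^n}$ is a finite free $\mr A^\bullet(U^n)$-module: the classes of the monomials $p_1^{a_1}\cdots p_n^{a_n} e^{\epsilon}$ from the basis appearing in \cite[Theorem~9]{Ananyevskiy_PhD_Thesis} provide a splitting independent of $n$. Since finite direct sums commute with limits, we obtain
\[
\lim_{[n] \in \Delta}\ \mr A^\bullet(U^n)[p_1,\ldots,p_n,e]/J_{2n,2k+1} \simeq \mr A^\bullet(\mc X)[p_1,\ldots,p_n,e]/J_{2n,2k+1},
\]
and passing to the limit of the isomorphisms $\varphi_{U^n}$ yields $\varphi_{\mc X}$.

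The main technical obstacle is to justify the interchange of the descent limit with the formation of the polynomial quotient; this is why the finite free module structure provided by Ananyevskiy's explicit basis is essential, and it would be the point to verify carefully. A secondary subtlety is to ensure the cohomology descent is really available in the required generality for NL-stacks and that base change along $\pi_1$ is compatible with the Nisnevich covers of $\mc X$, both of which should be routine consequences of \cite{ChoDA24}.
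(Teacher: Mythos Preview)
Your proposal is correct and follows essentially the same route as the paper: construct $\varphi_{\mc X}$ from the pulled-back characteristic classes, take a NL-atlas, apply \Cref{ch3:_Ana_Thm_9+} at each level of the \v{C}ech nerve, and pass to the limit via descent. The only cosmetic difference is in how the limit interchange is justified: the paper upgrades the levelwise isomorphisms to equivalences of mapping spectra (since they hold in every bidegree) and then takes a limit of equivalences, whereas you invoke the finite free $\mr A^\bullet$-module basis to commute the limit with the polynomial quotient; both arguments are valid and address exactly the ``technical obstacle'' you flagged.
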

\begin{proof}
	Via the pullback map induced by the first projection $ p_1: \mc X\times_S SGr_S(2n,2k+1) $, we get an $ \mr A^{\bullet}(\mc X) $-algebra structure on $ \mr A^{\bullet}(\mc X \times_S SGr_S(2n,2k+1)) $. Since the Pontryagin and Euler classes in $ \mr A^{\bullet}(\mc X\times_S SGr_S(2n,2k+1)) $ are the pullback of the respective classes in $ \mr A^{\bullet}(SGr_S(2n,2k+1)) $, then there exists a unique map of $ \mr A^{\bullet}(\mc X) $-algebras:
	\[ \varphi_{\mc X}: \quot{\mr A^{\bullet}(\mc X)[p_1,p_2,\ldots, p_n, e]}{J_{2n, 2k+1}} \longrightarrow \mr A^{\bullet}( \mc X\times_S SGr_{S}(2n,2k+1))  \]
	\noindent defined by sending $ p_i $ to $  p_i(T_1) $ and $ e $ to $ e(T_1) $. Let $ x: X\rightarrow \mc X $ be a NL-atlas and let $ X^r_{\mc X}:=\check{C}_{r}(\bigslant{X}{\mc X}) $ be the scheme at the $ r^{th} $-level of the \v Cech nerve. By \cref{ch3:_Ana_Thm_9+} we have isomorphisms:
	\[ \varphi_{X^r_{\mc X}}: \quot{\mr A^{p,q}(X^r_{\mc X})[p_1,\ldots,p_n,e]}{J_{2n,2k+1}} \stackrel{\sim}{\longrightarrow} \mr A^{p+q}(X^r_{\mc X}\times_S SGr_S(2n,2k+1)) \]
	\noindent for each $ r $ and for each bi-degree $ (p,q) $. This implies that we have an equivalence of mapping spectra:
	
	\begin{equation}\label{ch3:_eq_Ana_9_NisLoc}
		\iMap(X^r_{\mc X}, A_{J_{2n,2k+1}}) \stackrel{\sim}{\longrightarrow} \iMap(SGr_{X^r_{\mc X}}(2n,2k+1), \mr A) 
	\end{equation}
	\[ \]
	\noindent where $ A_{J_{2n,2k+1}} $ is the ring spectrum representing $ \quot{\mr A^{\bullet}(-)[p_1,\ldots,p_n,e]}{J_{2n,2k+1}} $. The equivalence \eqref{ch3:_eq_Ana_9_NisLoc}, by \cite[Remark 3.38]{Motivic_Vistoli}, implies that we have:
	
	\begin{align*}
		\mr A_{J_{2n,2k+1}}(\mc X)&\simeq \lim_{r \in \Delta} \mr A_{J_{2n,2k+1}}(X^r_{\mc X})\simeq\\
		& \simeq \lim_{r \in \Delta} \iMap(X^r_{\mc X}, A_{J_{2n,2k+1}}) \simeq\\
		&\simeq \lim_{r \in \Delta} \iMap(SGr_{X^r_{\mc X}}(2n,2k+1), \mr A) \simeq\\
		&\simeq \lim_{r \in \Delta} \mr A(SGr_{X^r_{\mc X}}(2n,2k+1))\simeq \\
		& \simeq \mr A(\mc X\times_S SGr_S(2n,2k+1))
	\end{align*}
	In other words:
	\[  \varphi_{\mc X}: \quot{\mr A^{\bullet}(\mc X)[p_1,p_2,\ldots, p_n, e]}{J_{2n, 2k+1}} \longrightarrow \mr A^{\bullet}( \mc X\times_S SGr_{S}(2n,2k+1))  \]
	\noindent is an equivalence as claimed.
\end{proof}

\begin{thm}[K\"unneth for BSL]\label{ch3:_Kunneth_BSL_n}
	Let $ S \in \catname{Sm}_{\bigslant{}{\sk}} $ and let $ \mc X \in \oocatname{ASt}_{\bigslant{}{S}}^{NL} $ be a smooth NL-algebraic stack.
	Denote by $ \oocatname{U}_r $ the universal tautological rank $ r $ bundle over $ \mc BSL_{r,S} $. 
	Let $ \mr A \in \mr{SH}(S) $ be an $ SL_{\eta} $-oriented ring spectrum. Then there are unique $ \mr A^{\bullet}(\mc X) $-algebra maps: 
	\[ \varphi_{\mc X}: {\mr A^{\bullet}(\mc X)\llbracket p_1,p_2,\ldots, p_{n-1}, e\rrbracket} \longrightarrow \mr A^{\bullet}( \mc X\times_S \mc BSL_{2n,S})  \]
	\[ \varphi_{\mc X}: {\mr A^{\bullet}(\mc X)\llbracket p_1,p_2,\ldots, p_n\rrbracket} \longrightarrow \mr A^{\bullet}( \mc X\times_S \mc BSL_{2n+1,S})  \]
	\noindent that are continuous with respect to the topology given by the restriction to the finite level approximations $ \mc X\times_S SGr_S(2n,2k+1) $, resp. $ \mc X \times_S SGr_S(2n+1,2k+1) $, and with $ \varphi_{\mc X}(p_i)=p_i(\oocatname{U}_{2n}) $, resp. $ \varphi_{\mc X}(p_i)=p_i(\oocatname{U}_{2n+1})  $ and $ \varphi_{\mc X}(e)=e(\oocatname{U}_{2n})  $.
	
	\noindent Moreover the maps $ \varphi_{\mc X} $ are isomorphisms. In particular we get an isomorphism:
	\[ \mr A^{\bullet}(\prod_{j=0}^{s}\mc BSL_{k_j,S})\simeq \mr A^{\bullet}(\mc BSL_{k_0,S})\widehat{\otimes}_{\mr A^{\bullet}(S)} \ldots \widehat{\otimes}_{\mr A^{\bullet}(S)} \mr A^{\bullet}(\mc BSL_{k_s,S}) = \widehat{\bigotimes_j}\mr A^{\bullet}(\mc BSL_{k_j,S}) \]
\end{thm}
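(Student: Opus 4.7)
The plan is to lift the finite-level isomorphisms of \Cref{ch3:_Ana_thm_9_NisLoc} to the limit $\mc BSL_{r,S}\simeq \colim_{k}SGr_S(r,2k+1)$, and then bootstrap the single-factor result to an arbitrary product.

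First I would define $\varphi_{\mc X}$. The characteristic classes $p_i(\oocatname{U}_{r})\in \mr A^{\bullet}(\mc X\times_S \mc BSL_{r,S})$ and (when $r=2n$) $e(\oocatname{U}_{2n})$ are well-defined by \cref{sec.2:_Factorization_U_j} and \cref{ch2:_SL_orientations}, and their restrictions to $\mc X\times_S SGr_S(r,2k+1)$ are precisely the classes $p_i(T_1)$ and $e(T_1)$ appearing in \Cref{ch3:_Ana_thm_9_NisLoc}. Since $\mr A^{\bullet}(\mc X\times_S \mc BSL_{r,S})$ is topologized by the system of restriction maps to the finite approximations, there is at most one continuous $\mr A^{\bullet}(\mc X)$-algebra map sending the chosen generators to these classes, and existence follows from the compatibility of the $p_i,e$ under the transition maps.

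Next I would identify both sides as inverse limits. On the geometric side, using \cite[Proposition 3.33 and Remark 3.38]{Motivic_Vistoli} as in the proof of \Cref{ch3:_Ana_thm_9_NisLoc}, we have
\[ \mr A(\mc X\times_S \mc BSL_{r,S})\simeq \lim_{k} \mr A(\mc X\times_S SGr_S(r,2k+1)). \]
On the algebraic side, one checks that the truncation maps
\[ \mr A^{\bullet}(\mc X)[p_1,\ldots,p_n,e]/J_{2n,2k+3}\twoheadrightarrow \mr A^{\bullet}(\mc X)[p_1,\ldots,p_n,e]/J_{2n,2k+1} \]
are compatible with the restriction along $SGr_S(2n,2k+1)\hookrightarrow SGr_S(2n,2k+3)$, so that $\varphi_{\mc X}$ is realized as the inverse limit of the finite-level isomorphisms of \Cref{ch3:_Ana_thm_9_NisLoc}. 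The key algebraic point is that the degrees of the generators $g_{k-n+1},\ldots,g_k$ of $J_{2n,2k+1}$ grow with $k$, so in the limit these relations disappear, yielding
\[ \lim_{k}\ \mr A^{\bullet}(\mc X)[p_1,\ldots,p_n,e]/J_{2n,2k+1}\ \simeq\ \mr A^{\bullet}(\mc X)\llbracket p_1,\ldots,p_n,e\rrbracket/(e^2-p_n), \]
and using the relation $e^2=p_n$ to eliminate $p_n$ gives the stated presentation in the even rank case. In the odd rank case the same argument applies with no Euler class generator. Since the finite-level maps are already isomorphisms, no $\lim^{\!1}$ obstruction can arise when passing to mapping spectra; this is cleanest to state at the level of mapping spectra as in the proof of \Cref{ch3:_Ana_thm_9_NisLoc}.

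Finally, for the Künneth statement $\mr A^{\bullet}(\prod_{j}\mc BSL_{k_j,S})\simeq \widehat{\bigotimes}_j \mr A^{\bullet}(\mc BSL_{k_j,S})$, I would proceed by induction on $s$. The base case is trivial, and the inductive step applies the first part of the theorem with $\mc X:=\prod_{j<s}\mc BSL_{k_j,S}$, which is a smooth NL-algebraic stack, to convert $\mr A^{\bullet}(\mc X\times_S \mc BSL_{k_s,S})$ into a power series ring over $\mr A^{\bullet}(\mc X)$; by definition of the completed tensor product, this is exactly $\mr A^{\bullet}(\mc X)\,\widehat{\otimes}_{\mr A^{\bullet}(S)}\,\mr A^{\bullet}(\mc BSL_{k_s,S})$. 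The main technical obstacle is the interchange of limits implicit in forming $\mc X\times_S \mc BSL_{k_s,S}$ when $\mc X$ is itself an infinite colimit, but this is handled because the Künneth isomorphism produced at each stage is continuous with respect to the finite-level topology, so iterating it commutes with the outer limits.
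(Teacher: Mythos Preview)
Your proposal is correct and follows essentially the same strategy as the paper: pass the finite-level isomorphisms of \Cref{ch3:_Ana_thm_9_NisLoc} to the inverse limit via \cite[Proposition 3.33]{Motivic_Vistoli}, and then iterate with $\mc X=\prod_{j<s}\mc BSL_{k_j,S}$ to obtain the K\"unneth formula. The one place you are slightly looser than the paper is the odd rank case: \Cref{ch3:_Ana_thm_9_NisLoc} is only stated for $SGr_S(2n,2k+1)$, so ``the same argument with no Euler class generator'' presupposes an odd-rank analogue; the paper instead invokes the duality $\mr A^{\bullet}(SGr_S(2n+1,2k+1))\simeq \mr A^{\bullet}(SGr_S(2k-2n,2k+1))$ from \cite[Remark 14]{Ananyevskiy_PhD_Thesis} to reduce to the even case already established.
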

\begin{proof}
	Let us start with the even case (the only case we are actually interested in for future applications). We know that the ind-scheme $ BSL_{n,S}=\colim{k} SGr_{S}(n, 2k+1) $, since the system made of $ SGr_{S}(n,2k+1) $ is cofinal inside the system made by all special linear Grassmannians. Then \cref{ch3:_Ana_thm_9_NisLoc} tells us that the system $ \set{\mr A^{\bullet}(\mc X \times_S SGr_{S}(n,2k+1))} $ satisfies the Mittag-Leffler condition. Hence by \cite[Proposition 3.33]{Motivic_Vistoli}, applied to the spectrum $ \mr A_{\mc X} $ as constructed in \cite[Lemma 3.17]{Witt_Loc_PhD}, we have:
	\[ A^{\bullet}( \mc X\times_S \mc BSL_{n,S}) \simeq \lim_{k} \mr A^{\bullet}(\mc X \times SGr_{S}(n, 2k+1)) \]
	\noindent By \cref{ch3:_Ana_thm_9_NisLoc} this gives us the isomorphism:
	\[ \varphi_{\mc X}: {\mr A^{\bullet}(\mc X)\llbracket p_1,p_2,\ldots, p_{n-1}, e\rrbracket} \longrightarrow \mr A^{\bullet}( \mc X\times_S \mc BSL_{2n,S})  \]
	For the odd case, it is enough to use the identification $ \mr A^{\bullet}(SGr_S(2n+1, 2k+1))\simeq \mr A^{\bullet}(SGr_S(2k-2n,2k+1)) $ as pointed out in \cite[Remark 14]{Ananyevskiy_PhD_Thesis} and hence reduce to the same argument used in the even case.\\
	Now for the last statement of the theorem, it is enough to show that:
	\[  \mr A^{\bullet}(\mc BSL_{k_1,S}\times \mc BSL_{k_2,S})\simeq \mr A^{\bullet}(\mc BSL_{k_1,S})\widehat{\otimes}_{\mr A^{\bullet}(S)} \mr A^{\bullet}(\mc BSL_{k_2,S})  \] 
	\noindent and then iterate. Without loss of generality, we can suppose $ k_1=2n $ and $ k_2=2m+1 $. Let us denote the Pontryagin and Euler classes of $ \mc BSL_{k_1,S} $ by $ p_{i,1}, e_{1} $ and denote by $ p_{i,2} $ the Pontryagin classes of  $ \mc BSL_{k_2,S} $. Then by what we just proved, for $ \mc X= \mc BSL_{k_1,S} $, we have:
	\begin{align*}
		\mr A^{\bullet}(\mc BSL_{k_1,S}\times \mc BSL_{k_2,S})&\simeq \mr A^{\bullet}(\mc BSL_{k_1,S})\llbracket p_{1,2},\ldots,p_{m,2}\rrbracket \simeq \\
		& \simeq \left( \mr A^{\bullet}(S)\llbracket p_{1,1}, \ldots, p_{n-1,1},e_1\rrbracket \right)\llbracket p_{1,2},\ldots,p_{m,2}\rrbracket  \simeq \\
		&\simeq  \mr A^{\bullet}(S)\llbracket p_{1,1}, \ldots, p_{n-1,1},e_1\rrbracket \widehat{\otimes}_{\mr A^{\bullet}(S)} ( \mr A^{\bullet}(S)\llbracket p_{1,2}, \ldots, p_{m,2}\rrbracket \simeq \\
		& \simeq \mr A^{\bullet}(\mc BSL_{k_1,S}) \widehat{\otimes}_{\mr A^{\bullet}(S)} \mr A^{\bullet}(\mc BSL_{k_2,S})
	\end{align*}
	\noindent as we wanted. All the other cases are totally analogous and then we can iterate the computations for the general case $ \prod_{j=0}^{s}\mc BSL_{k_j}  $.
\end{proof}

\begin{co}[K\"unneth for BGL]\label{ch3:_Kunneth_BGL}
	Let $ S \in \catname{Sm}_{\bigslant{}{\sk}} $ and let $ \mc X \in \oocatname{ASt}_{\bigslant{}{S}}^{NL} $ be a smooth NL-algebraic stack. Let $ \oocatname{U}_r $ be the universal tautological bundle over $ \mc BGL_{r,S} $. Let $ \mr A \in \mr{SH}(S) $ be an $ SL_{\eta} $-oriented ring spectrum. Then there is a unique $ \mr A^{\bullet}(\mc X) $-algebra map:
	\[ \varphi_{\mc X}: {\mr A^{\bullet}(\mc X)\llbracket p_1,p_2,\ldots, p_{n}\rrbracket} \longrightarrow \mr A^{\bullet}( \mc X\times_S \mc BGL_{2n,S})\simeq \mr A^{\bullet}( \mc X\times_S \mc BGL_{2n+1,S})  \]
	\noindent that is continuous with respect to the topology given by the finite level approximations $ Gr_S(2n,k) $, resp. $ Gr_S(2n+1,k) $, and such that the elements $ p_i $ are sent to the Pontryagin classes $ p_i(\oocatname{U}_{2n}) $, resp. $ p_i(\oocatname{U}_{2n+1}) $.\\
	
	Moreover $ \varphi_{\mc X} $ is an isomorphism. In particular, we get an equivalence:
	\[ \mr A^{\bullet}(\prod_{j=0}^{s}\mc BGL_{k_j,S})\simeq \mr A^{\bullet}(\mc BGL_{k_0,S})\widehat{\otimes}_{\mr A^{\bullet}(S)} \ldots \widehat{\otimes}_{\mr A^{\bullet}(S)} \mr A^{\bullet}(\mc BGL_{k_s,S}) = \widehat{\bigotimes_j}\mr A^{\bullet}(\mc BGL_{k_j,S}) \]
\end{co}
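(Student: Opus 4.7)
The plan is to reduce the statement to the $\mc BSL$-case already handled in \Cref{ch3:_Kunneth_BSL_n}, using \Cref{ch3:_4.1_MEC} as the bridge between $\mr A^\bullet(\mc BGL_n)$ and $\mr A^\bullet(\mc BSL_n)$. First I would promote \Cref{ch3:_4.1_MEC} from the ``trivial'' base $S$ to any smooth NL-stack factor $\mc X$: concretely, the map $\pi_0\colon {B}SL_{n,S}\to {B}GL_{n,S}$ is the complement of the zero section in $\det(E_n)$, and pulling this $\mb G_m$-bundle structure back along the projection $\mc X\times_S \mc BGL_{n,S}\to \mc BGL_{n,S}$ produces the same localisation triangle. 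Combined with the fact that cup product with the Euler class of a line bundle vanishes in an $SL_\eta$-oriented theory (the argument of \cite[Lemma~4.3]{Motivic_Euler_Char} used in the proof of \Cref{ch3:_4.1_MEC}), the sequence splits into a short exact sequence, and the tautological symbol $\langle t_{can}\rangle$ provides the splitting just as before. This yields a natural decomposition
\[
\mr A^\bullet(\mc X\times_S \mc BSL_{n,S})\;\simeq\;\mr A^\bullet(\mc X\times_S \mc BGL_{n,S})\;\oplus\;\mr A^\bullet(\mc X\times_S \mc BGL_{n,S};\det(E_n)).
\]

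Second, I would feed this splitting into \Cref{ch3:_Kunneth_BSL_n} to read off the presentation. For $n=2m$, the right-hand side of \Cref{ch3:_Kunneth_BSL_n} is $\mr A^\bullet(\mc X)\llbracket p_1,\dots,p_{m-1},e\rrbracket$, and the image of $\pi^*$ on the untwisted $\mc BGL$-factor is exactly the ``even in $e$'' subring $\mr A^\bullet(\mc X)\llbracket p_1,\dots,p_{m-1},e^2\rrbracket$, as in \Cref{ch3:_4.1_MEC}. Identifying $e^2$ with $p_m(\oocatname{U}_{2m})$ (the relation forced by the $SL$-structure on $\pi_0^*\oocatname{U}_{2m}$) gives the presentation $\mr A^\bullet(\mc X\times_S \mc BGL_{2m,S})\simeq \mr A^\bullet(\mc X)\llbracket p_1,\dots,p_m\rrbracket$. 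The $\mc BGL_{2m+1,S}$ case follows either by running the same argument with the odd case of \Cref{ch3:_Kunneth_BSL_n}, or more directly from Haution's equivalences $\mc BGL_{2m,S}\simeq \mc BGL_{2m+1,S}\simeq \mc BSL_{2m+1,S}$ in $\mr{SH}(S)[\eta^{-1}]$ recalled in the remark following \Cref{ch3:_4.1_MEC}. Uniqueness and continuity of $\varphi_{\mc X}$ are automatic from the universal property of power series rings and the Mittag--Leffler argument used for $\mc BSL$.

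Third, for the tensor-product description I would iterate exactly as in the last step of \Cref{ch3:_Kunneth_BSL_n}: applying the presentation with $\mc X=\mc BGL_{k_1,S}$ rewrites
\[
\mr A^\bullet(\mc BGL_{k_1,S}\times_S\mc BGL_{k_2,S})\;\simeq\;\mr A^\bullet(\mc BGL_{k_1,S})\llbracket p_{1,2},\dots,p_{\lfloor k_2/2\rfloor,2}\rrbracket,
\]
and expanding the outer $\mr A^\bullet(\mc BGL_{k_1,S})$ via its own presentation identifies this with $\mr A^\bullet(\mc BGL_{k_1,S})\widehat\otimes_{\mr A^\bullet(S)}\mr A^\bullet(\mc BGL_{k_2,S})$. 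An induction on $s$ then yields the general product formula.

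The main obstacle is the first step: upgrading the splitting of \Cref{ch3:_4.1_MEC} from a scheme $S$ to an arbitrary smooth NL-stack factor $\mc X$. The original proof mixes a localisation sequence on the finite approximations $B_mGL_n$ with a Mayer--Vietoris / Nisnevich-local reduction to check that multiplication by $\bar\partial\langle t_{can}\rangle$ is an isomorphism. Transporting this to $\mc X\times_S B_m GL_n$ requires replacing the Nisnevich cover argument by a descent along a smooth NL-atlas $x\colon X\to \mc X$, in the spirit of the \v Cech-nerve argument in \Cref{ch3:_Ana_thm_9_NisLoc}; once that passage is set up, the verification of the splitting and the Mittag--Leffler condition for the finite approximations are essentially formal.
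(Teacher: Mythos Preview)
Your proposal is correct, but the paper's proof is considerably shorter and takes a different route. The paper simply invokes Haution's equivalences $BGL_{2n,S}\simeq BGL_{2n+1,S}\simeq BSL_{2n+1,S}$ in $\mr{SH}(S)[\eta^{-1}]$ (the very result you mention in passing as an alternative for the odd case) and observes that, once these identifications are made, the entire corollary---including the $\mc X$-relative presentation and the K\"unneth decomposition---is an immediate instance of the odd case of \Cref{ch3:_Kunneth_BSL_n}. No upgrading of \Cref{ch3:_4.1_MEC} to a general NL-stack base, no separate identification of the ``even in $e$'' subring, and no new Mittag--Leffler check is needed.

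By contrast, your route is to redo the localisation/splitting argument of \Cref{ch3:_4.1_MEC} relative to $\mc X$, then extract the untwisted summand from the $\mc BSL$-K\"unneth formula. This works, and it has the virtue of being more self-contained: it avoids importing Haution's structural equivalences and stays within the circle of ideas already developed in the paper. The cost is the extra step you yourself flag as the main obstacle---promoting the splitting to an NL-stack base via \v Cech descent---which, while routine, is real work. The paper's approach trades that work for a black-box citation; yours would make the corollary essentially independent of \cite{Haution_Odd_VB}.
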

\begin{proof}
	Under the identifications $ BGL_{2n,S}\simeq BGL_{2n+1,S}\simeq BSL_{2r+1} $ of \cite[Theorem 6.3.3, Theorem 6.3.7]{Haution_Odd_VB} in $ \mr{SH}(S)[\eta^{-1}] $, the corollary follows from \cref{ch3:_Kunneth_BSL_n}.
\end{proof}

\begin{pr}\label{ch3:_Kunneth_Formula} 
	Let $ S \in \catname{Sm}_{\bigslant{}{\sk}} $. Let $ \mr{A} $ be an $ SL_{\eta} $-oriented ring spectrum. Let $ \mc X=\prod_{i=1}^{s} \mc{B}GL_{n_i,S} \times \prod_{j=s+1}^{s+r} \mc{B}SL_{n_j,S}  $. Then we have:
	
	\[ \mr{A}^{\bullet}(X; L)\simeq \cbigotimes_{i=1}^{s} \mr{A}^{\bullet}\left( \mc{B}GL_{n_i}; L_i \right) \widehat\otimes_{W(\sk)} \cbigotimes_{j=s+1}^{s+r} \mr{A}^{\bullet}\left( \mc{B}SL_{n_j}; L_j \right) \]
	\noindent with $ L:=L_1 \boxtimes \ldots \boxtimes L_{s+r} $ and $ L_i\in \mr{Pic}(\mc{B}GL_{n_i}) $ for $ i=1,\ldots,s $ and $ L_j\in \mr{Pic}(\mc{B}SL_{n_j}) $ for $ j=s+1,\ldots,s+r $ and where $ \widehat{\otimes} $ denotes the completed tensor product.
\end{pr}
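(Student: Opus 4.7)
The strategy is induction on the total number of factors $s+r$, reducing everything to the two-factor case $\mc X \times_S \mc Y$, where $\mc X$ is a smooth NL-stack and $\mc Y$ is either $\mc BGL_n$ or $\mc BSL_n$. The first preparatory step is to upgrade Theorem \ref{ch3:_Kunneth_BSL_n} to allow a twist on the $\mc X$ factor: for any line bundle $L$ on $\mc X$ I would show
\[
\mr A^\bullet(\mc X; L)\llbracket p_1,\dots,p_{n-1},e\rrbracket \xrightarrow{\sim} \mr A^\bullet(\mc X \times_S \mc BSL_{2n,S}; p_1^* L),
\]
and analogously in the odd case. The argument is a step-by-step twisted rerun of Propositions \ref{ch3:_Ana_Thm_9+} and \ref{ch3:_Ana_thm_9_NisLoc}: the Leray spectral sequence in \ref{ch3:_Ana_Thm_9+} works verbatim after tensoring with $\Th{\mc X}{L}$ because the relations defining $J_{2n,2k+1}$ live entirely on the Grassmannian side; the \v Cech descent in \ref{ch3:_Ana_thm_9_NisLoc} commutes with the pullback twist $p_1^* L$; and the Mittag-Leffler condition on the exhaustion $\{SGr_S(2n,2k+1)\}_k$ is inherited from the untwisted case.

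For a $\mc BGL_n$ factor, I would next run Proposition \ref{ch3:_4.1_MEC} relatively over $\mc X$, using the model $BSL_n = \det(E_n)^\times \subset \det(E_n) \to BGL_n$ and the resulting localization sequence further twisted by $p_1^* L$. The two key inputs persist in this twisted setting: $\eta$-invertibility still kills cup products with $e(\det E_n)$ via \cite[Lemma 4.3]{Motivic_Euler_Char}, and $\bar\partial(\langle t_{can}\rangle)$ restricts to $\eta$ on a Zariski cover by \cite[Lemma 6.4]{Ananyevskiy_SL_oriented} and is therefore invertible, giving
\[
\mr A^\bullet(\mc X \times BGL_n; p_1^* L) \oplus \mr A^\bullet(\mc X \times BGL_n; p_1^* L \otimes p_2^*\det E_n) \xrightarrow{\sim} \mr A^\bullet(\mc X \times BSL_n; p_1^* L).
\]
Comparing with the twisted $\mc BSL$-Künneth from the previous paragraph and with the untwisted decomposition of $\mr A^\bullet(\mc BSL_n)$ provided by \ref{ch3:_4.1_MEC}, both summands on the left get identified, yielding
\[
\mr A^\bullet(\mc X \times \mc BGL_n; p_1^* L \otimes p_2^* L') \simeq \mr A^\bullet(\mc X; L)\,\widehat\otimes_{\mr A^\bullet(S)}\, \mr A^\bullet(\mc BGL_n; L')
\]
for $L' \in \{\mo{}, \det E_n\}$. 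This exhausts every twist modulo squares, which act invisibly for $SL$-oriented theories by \cite[Theorem 4.3]{Ananyevskiy_SL_oriented}, and hence all of $\mr{Pic}(\mc BGL_n)$. Iterating the two-factor cases by peeling off one factor and feeding the remaining product into $\mc X$ finishes the induction, the singleton base case being tautological; moreover, $\mr{Pic}(\mc BSL_{n_j}) = 0$ for $n_j \geq 2$ forces each $L_j$ on the $\mc BSL$ side to be trivial, consistent with the statement. The base ring $W(\sk)$ appearing in the claim coincides with $\mr A^\bullet(\sk)$ for the principal application $\mr A = \mr{KW}$.

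The main technical obstacle is the twisted version of Proposition \ref{ch3:_Ana_Thm_9+}. One has to verify that the Pontryagin and Euler classes of the tautological bundle on $SGr_S(2n,2k+1)$, which naturally live in untwisted cohomology, still act freely on the $L$-twisted cohomology pulled back from $\mc X$, and that the Leray spectral sequence respects this module action. Both points ultimately rest on the observation that $L$ is pulled back along the factor projection and therefore its Thom equivalence commutes with all the Gysin maps and smooth pushforwards underlying the cellular computation of $\mr A^\bullet(SGr_S)$; the argument is bookkeeping, but it is where the bulk of the work lies.
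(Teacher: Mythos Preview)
Your approach is correct, but it is considerably more elaborate than what the paper actually does. The paper's proof is one sentence: it invokes a Mayer--Vietoris argument to reduce the twisted statement directly to the untwisted K\"unneth formulas already established in \cref{ch3:_Kunneth_BSL_n} and \cref{ch3:_Kunneth_BGL}. Concretely, on finite-level approximations one covers by opens trivialising the line bundle $L$, applies the untwisted K\"unneth on each open, and glues; the Mittag--Leffler condition from the untwisted computation carries over. You instead rebuild the entire chain of results (\cref{ch3:_Ana_Thm_9+}, \cref{ch3:_Ana_thm_9_NisLoc}, \cref{ch3:_Kunneth_BSL_n}, \cref{ch3:_4.1_MEC}) in twisted form and then iterate factor by factor. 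This is a legitimate and self-contained route, and the technical points you flag (commutation of the pulled-back twist with the cellular decomposition, persistence of the $\eta$-splitting in the relative localisation sequence) are exactly the ones that need checking; but the paper's shortcut avoids all of this bookkeeping by observing that the twist is locally trivial and that the untwisted case is already in hand. Your remark that $\mr{Pic}(\mc BSL_{n_j})=0$ forces the $L_j$ on the $SL$-side to be trivial is correct and worth noting, since the paper's statement leaves this implicit.
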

\begin{proof}
	The proposition follows by a Mayer-Vietoris argument from the untwisted cases in  \cref{ch3:_Kunneth_BSL_n} and \cref{ch3:_Kunneth_BGL}.
\end{proof}

\section{Twisted Symplectic Bundles and Twisted Borel Classes}

To aid in computing Euler classes of certain  interesting rank 2 bundles on $ \mc BN $, we introduce the notion of twisted Borel classes. We will always work over some base scheme $ S\in \catname{Sm}_{\bigslant{}{\sk}} $.  \\



In analogy with the fact that any rank two $ SL $-bundle has a canonical symplectic structure, we would like to consider a general rank two vector bundle with non-trivial determinant as a \textit{twisted symplectic bundle}:

\begin{defn}
	Given a vector bundle $ V $ over a scheme $ X $ and a line bundle $ L \in \mr{Pic}(X) $, an $ L $\textit{-twisted symplectic form} on $ V $ is a non-degenerate alternating form $ \omega^{L}: \Lambda^2 V \stackrel{}{\longrightarrow} L $.  A vector bundle $ V $ equipped with an $ L $-twisted symplectic form will be an $ L $-twisted symplectic bundle $ (V, \omega^{L}) $.
\end{defn} 

\begin{rmk}
	\begin{enumerate}\label{ch3:_rk_2_twisted_symplectic}
		\item Any rank 2 vector bundle $ V $ over any scheme $ X $ has a canonical $ \det(V) $-twisted symplectic form $ \omega_{can}: \Lambda^2 V \stackrel{\sim}{\longrightarrow} \det(V) $ given by the identification of its second exterior power with its determinant.
		\item For trivial twists $ L\simeq \mo{X} $, we recover the notion of symplectic bundles.
	\end{enumerate}
\end{rmk}

\begin{defn}
	A \textit{twisted symplectic Thom structure} on an $ SL $-oriented ring cohomology theory $ \mb E $ is a rule that assigns to each rank two $ L $-twisted symplectic bundle $ (E, \omega_L) $  over $ X $, a class:
	\[  \mr{th}(E, \omega^L) \in \mb E^{4,2}\left( \mr{Th}_X(E); L \right):=\mb E_{\mr{DJK}}^{6,3}\left( X, -[L\oplus E] \right)  \]
	\noindent such that:
	
	\begin{enumerate}
		\item For an isomorphism of twisted symplectic bundles $ u: (E_1,\omega_{L_1}) \stackrel{\sim}{\rightarrow} (E_2,\omega_{L_2}) $, we have  $ u^*\mr{th}(E_1, \omega^{L_1})=\mr{th}(E_2,\omega^{L_2}) $.
		\item For a map $ f: X \longrightarrow Y $ and a twisted symplectic bundle $ (E, \omega^L) $ over $ Y $, we have $ f^*\mr{th}(E, \omega^L)=\mr{th}(f^*E, f^*\omega^L) $.
		\item For $ can: \Lambda^2 \mo{X}^2 \stackrel{\sim}{\rightarrow} \mo{X} $ the canonical isomorphism, the class $ \mr{th}(\mo{X}^2, can) $ is the image of $ 1\in \mb E^{0,0}(X) $ under the suspension isomorphism:
		\[ \mb E^{0,0}(X)\simeq \mb E^{4,2}(\Sigma^{4,2}\mbbm 1_X)=\mb E^{4,2}(\Th{X}{\mo{X}^2}) \]
	\end{enumerate}
	
\end{defn}

\begin{notation}
	To distinguish between Thom classes coming from (twisted) symplectic bundles and the ones coming from $ SL $-vector bundle, we will denote the latter as $ th^{SL}(\cdot) $.
\end{notation}

The following is basically due to Ananyevskiy \cite[Corollary 1]{Ananyevskiy_Pushforwards_Eta_Inverted}:

\begin{pr}\label{ch3:_SL_oriented_Twisted_sympl}
	Any $ SL $-oriented ring spectrum $ \mb E $ admits a twisted symplectic Thom structure.
\end{pr}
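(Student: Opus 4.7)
The plan is to construct the twisted symplectic Thom class by reducing to the ordinary $SL$-Thom class. For a rank two bundle $E$ with $L$-twisted symplectic form, non-degeneracy of $\omega^L$ on a rank two bundle forces the induced map $\omega^L : \Lambda^2 E = \det E \stackrel{\sim}{\to} L$ to be an isomorphism. Hence the rank three bundle $V := E \oplus L^{-1}$ acquires a canonical trivialization of its determinant via
\[ \det V \simeq \det E \otimes L^{-1} \stackrel{\omega^L \otimes \mr{id}}{\longrightarrow} L \otimes L^{-1} \stackrel{\mr{ev}}{\longrightarrow} \mo{X}, \]
promoting $V$ to an $SL_3$-bundle canonically associated to $(E, \omega^L)$.

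The $SL$-orientation of $\mb E$ applied to $V$ yields an $SL$-Thom class $\mr{th}^{SL}(V) \in \mb E^{6,3}(\Th{X}{V})$. Unwinding the twisted cohomology conventions set up earlier, this class is naturally an element of $\mb E^{4,2}(\Th{X}{E}; L^{-1})$, whereas the definition of a twisted symplectic Thom structure demands a class in $\mb E^{4,2}(\Th{X}{E}; L)$. To bridge the gap, I invoke the natural isomorphism $\mb E^{*,*}(Y; L^{-1}) \simeq \mb E^{*,*}(Y; L)$ available on any $SL$-oriented spectrum — this is essentially the content of \cite[Corollary 1]{Ananyevskiy_Pushforwards_Eta_Inverted}, arising from the canonical $SL$-trivialization of the rank two bundle $L \oplus L^{-1}$. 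Applied with $Y = \Th{X}{E}$, it transports $\mr{th}^{SL}(V)$ to the candidate class
\[ \mr{th}(E, \omega^L) \in \mb E^{4,2}(\Th{X}{E}; L). \]

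Verification of the three axioms is then essentially formal. Axioms (1) and (2) — invariance under isomorphisms of $L$-twisted symplectic bundles and compatibility with pullback — follow because the assignment $(E, \omega^L) \mapsto V$ equipped with its canonical trivialization of $\det V$ is functorial in the input, while both $\mr{th}^{SL}$ and the $L/L^{-1}$ twist-comparison are natural constructions. Axiom (3) reduces to the case $(E, \omega^L) = (\mo{X}^2, can)$, where $L = \mo{X}$, the twist-comparison becomes the identity, $V = \mo{X}^3$ is the trivial $SL_3$-bundle, and its $SL$-Thom class is the image of $1$ under the suspension isomorphism by the unit axiom of the $SL$-orientation. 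The main technical point is pinning down strict naturality of the $L/L^{-1}$ twist-comparison in the pair $(Y, L)$, since any scalar defect there would spoil Axiom (3); this coherence is exactly what Ananyevskiy's construction supplies. Notably $\eta$-invertibility plays no role in the argument, so the conclusion holds for every $SL$-oriented ring spectrum $\mb E$ as claimed.
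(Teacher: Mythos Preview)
Your proof is correct and follows essentially the same route as the paper: both take the twisted Thom class $\mr{th}^{SL}(E)$ of Definition~\ref{ch2:_def_twisted_Thom_class}, which by construction is the $SL$-Thom class of $E\oplus\det(E)^{-1}$ and lives in $\mb E^{4,2}(\Th{X}{E};\det(E)^{-1})$. The paper records this class as $\mr{th}(V,\omega_{can})$ without further comment, whereas you are more explicit in noting that the target of the definition is $\mb E^{4,2}(\Th{X}{E};L)$ and in invoking Ananyevskiy's $L/L^{-1}$ comparison to land there; this is exactly why the paper prefaces the proposition with the reference to \cite[Corollary~1]{Ananyevskiy_Pushforwards_Eta_Inverted}, so your added care simply spells out what the paper leaves implicit.
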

\begin{proof}
	For a rank 2 vector bundle $ V $ over some $ X \in \catname{Sch}_{\bigslant{}{S}} $, we get the canonical identification $ \omega_{can}: \Lambda^2V \longrightarrow \det(V) $. Then we define:
	\[ \mr{th}(V, can):=\mr{th}^{SL}(V) \in \mb E^{4,2}(\Th{X}{V}; \det{}^{-1}(V)) \]
	\noindent where $ \mr{th}^{SL}(V)  $ is the $ SL $-Thom class living in the twisted cohomology as defined in \cref{ch2:_def_twisted_Thom_class}.

	
	It is not difficult to check that this class satisfies the desired relations, see for example \cite[\S 3]{Levine_Raksit_Gauss_Bonnet} for a detailed account of these classes (in the case $ X \in \catname{Sm}_{\bigslant{}{S}} $).
\end{proof}

\begin{defn}
	Given a scheme $ X $, $ L \in \mr{Pic}(X) $ and an $ L $-twisted symplectic bundle $ (V, \omega^L) $ of rank $ 2n+2 $, we define the $ L $-twisted quaternionic Grassmannian as:
	\[ \mr{HGr}^{L}_{X}(k, V ) :=\set{ W \in \mr{Gr}(k, V) \st{-3}{5} \ \restrict{\omega^L}{W}: \Lambda^2 W \rightarrow L \ \ \text{is non-degenerate} }\]
	\noindent We will often denote the corresponding tautological rank $ k $ bundle as $ \mc U^{L}_{V} $ if not specified otherwise. When $ k=2 $, we will call this the twisted quaternionic projective space and we will denote it with $ \mr H\pro^L_X(V) $.\\
	Let $ (V, \omega^L) $ be a twisted symplectic bundle of dimension $ 2n+2 $ over a scheme $ X $, then if the base scheme $ X $ is clear from the context we will just write:
	\[ \mr H\pro^n_L:=\mr H\pro^L_X(V) \]
\end{defn}

Consider now any rank 2 vector bundle $ V $ over $ X $ with its zero section $ s_0:X \rightarrow V $, $ \mb E $ an $ SL $-oriented ring spectrum. By the twisted Thom isomorphism (cf. \cref{ch2:_Twisted_Thom_SL}), we get an induced map $ s_*:  \mb E^{0,0}(X) \rightarrow \mb E^{4,2}(\mr{Th}_X(V); \det^{-1}(V)) $ such that $ s_*(\mbbm 1)=\mr{th}(V, \omega_{can})$. Post-composing with the map forgetting supports, we also get $ \bar{s}_*: \mb E^{0,0}(X) \rightarrow \mb E^{4,2}(V; \det^{-1}(V))  $. In particular the element $ \bar{s}_*(\mbbm 1) $ will be the image of the class $ (\mr{th}(V), \omega_{can}) $ through the map forgetting supports.

\begin{defn}
	Let $\mb E$ be an $ SL $-oriented ring spectrum with a twisted symplectic Thom structure, and let $ (V, \omega_L) $ be an $L$-twisted symplectic, rank 2, bundle over $ X $ with zero section $ s: X \rightarrow V $. Let $\alpha^*: \mb E^{\bullet,\bullet}(\Th{X}{V}; L)\rightarrow \mb E^{\bullet,\bullet}(V; L) $ be the map forgetting supports. We define the \textit{twisted Borel class} of $ V $ as:
	\[ b^{\mb E}_{L}(V, \omega_{L}):=-s^*\alpha^* \mr{th}(V,\omega_L) \in \mb E^{4,2}(X; L) \]
	\noindent If the ring spectrum $ \mb E $ is clear from the context we will drop it from the notation.
\end{defn}
\begin{rmk}
	If $ V $ is an $ SL $-bundle of rank 2 over $ X $, then its twisted Borel class $ b_{\mo{X}}(V, \omega_{can}) \in \mb E^{4,2}(\mr{Th}_X(V); \mo{X})=\mb E^{6,3}(\mr{Th}_X(V)\otimes \mr{Th}_X(\A^1_{X}))\simeq \mb E^{4,2}(\mr{Th}_X(V)) $ corresponds to the classical untwisted Borel class coming from the $ SL $-orientation of $ \mb E $, so no harm is done if we just refer to Borel class of $ V $.
\end{rmk}

\begin{thm}[Twisted Quaternionic Projective Bundle Theorem]\label{ch3:_Twisted_Proj_Bun_Thm}
	Let $ \mb E $ be an $ SL $-oriented ring spectrum with a twisted Thom structure. Let $ (V, \omega^L) $ be a twisted symplectic bundle of rank 2n over a scheme $ X\in \catname{Sm}_{\bigslant{}{S}} $, let $ (\mc U, \restrict{\omega^L}{\mc U}) $ be the tautological rank 2 bundle over $ \mr H\pro^n_L $ and let $ \zeta:=b(\mc U, \restrict{\omega^L}{\mc U}) $ be its Borel class. Write $ \pi: \mr H\pro^n_L \rightarrow X $ for the projection map. Then for any closed subset $ Z \sseq X $ we have the isomorphism  of $ \mb E(X) $-modules:
	\[ (1,\zeta,\zeta^2,\ldots,\zeta^{n-1}): \bigoplus_{j=0}^{n-1}\mb E_Z(X; L^{\otimes -j}) \longrightarrow \mb E_{\pi^{-1}(Z)}(\mr H\pro^n_L) \]
	Moreover there are unique classes $ b_j^{L}(\mc E, \omega^L) \in \mb E^{4j,2j}(X; L^{\otimes j}) $ such that:
	\[ \zeta^n-b_1(E, \omega^L) \cup \zeta^{n-1}+\ldots b_2(E, \omega^L) \cup \zeta^{n-2}-\ldots+(-1)^{n}b_n(E, \omega^L)=0 \]
	\noindent and if $ (E, \omega^L) $ is the trivial symplectic bundle then $ b_j(E, \omega^L)=0 $ for $ j=1, \ldots $.

\end{thm}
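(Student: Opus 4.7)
The strategy is to reduce the twisted statement to Panin--Walter's untwisted symplectic projective bundle theorem via a Mayer--Vietoris argument that exploits the Zariski-local triviality of $L$.

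First I would check the map is well-defined and natural. Since $\zeta = b(\mc U, \restrict{\omega^L}{\mc U})$ lies in $\mb E^{4,2}(\mr H\pro^n_L; \pi^* L)$, its $j$-th power sits in twist $(\pi^*L)^{\otimes j}$; cupping $\zeta^j$ with the $\pi^*$-pullback of a class in $\mb E_Z(X; L^{\otimes -j})$ then lands in $\mb E_{\pi^{-1}(Z)}(\mr H\pro^n_L)$ with trivial twist, as required. Naturality under restriction to Zariski opens of $X$ follows from functoriality of the twisted symplectic Thom structure (axiom (2)) and of the forget-supports map used to define $\zeta$.

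The heart of the argument is showing the map is an isomorphism. Both the source and the target satisfy Zariski--Mayer--Vietoris descent on $X$, so it suffices to verify the claim locally. On any open $U \sseq X$ over which $L$ trivializes, a choice of isomorphism $\tau: \restrict{L}{U} \stackrel{\sim}{\rightarrow} \mo{U}$ turns $\restrict{(V, \omega^L)}{U}$ into an honest symplectic bundle $(V|_U, \omega)$, identifies $\restrict{\mr H\pro^n_L}{U}$ with the classical quaternionic projective bundle of $(V|_U, \omega)$, and identifies $\restrict{\zeta}{U}$ with the usual untwisted symplectic Borel class. On $U$ the statement is therefore exactly the symplectic projective bundle theorem of Panin--Walter. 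Two different trivializations differ by a unit $u \in \Gamma(U, \mo{U}^{\times})$, which acts as a symplectic automorphism of $V|_U$ and leaves the classical Borel class untouched; the local isomorphisms thus glue, and a Mayer--Vietoris five-lemma comparison on a trivializing cover of $X$ upgrades the local isomorphisms to the asserted global one. For general closed $Z \sseq X$ one runs the same argument using the long exact sequences attached to the localization $X \setminus Z \into X$, together with the absolute case just treated.

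Finally, for the polynomial identity, applying the isomorphism with $Z = X$ and twist shifted by $L^{\otimes n}$ (equivalently, replacing each $\mb E(X; L^{\otimes -j})$ by $\mb E(X; L^{\otimes (n-j)})$, which is legitimate because $\zeta^j$ still has twist $(\pi^*L)^{\otimes j}$) expresses $\zeta^n \in \mb E^{4n,2n}(\mr H\pro^n_L; (\pi^*L)^{\otimes n})$ as a unique $\mb E^{\bullet}(X)$-linear combination of $1, \zeta, \ldots, \zeta^{n-1}$, whose coefficients (with signs $(-1)^{n-j+1}$) produce the classes $b_j(E, \omega^L) \in \mb E^{4j, 2j}(X; L^{\otimes j})$. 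Uniqueness is immediate from the isomorphism. The vanishing of $b_j$ for $j \geq 1$ on the trivial twisted symplectic bundle follows by pullback from $\spec \sk$: there $\zeta$ reduces to the Borel class of the trivial rank-2 symplectic bundle, which vanishes by axiom (3) of the twisted Thom structure, forcing all coefficients to vanish.

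The main obstacle will be the Mayer--Vietoris gluing step: one must verify carefully that on overlaps the two identifications of $\restrict{\mr H\pro^n_L}{U \cap V}$ with the untwisted Panin--Walter space, coming from the two trivializations of $L$, differ by the action of a unit that preserves the classical Borel class. Once this compatibility is in place, the five-lemma argument on Mayer--Vietoris squares, combined with the Panin--Walter input at the trivialized level, concludes everything.
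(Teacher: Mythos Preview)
Your proposal is correct and follows essentially the same approach as the paper: the paper's proof is simply ``find opens that trivialise $L$ over $X$ and use them with a Mayer--Vietoris argument to reduce to the untwisted case proved in \cite[Theorem 8.2]{Panin-Walter}.'' Your write-up merely fleshes out the details (well-definedness, the derivation of the $b_j$ from the isomorphism, the trivial-bundle case); note that your worry about gluing is largely unnecessary, since the map $(1,\zeta,\ldots,\zeta^{n-1})$ is globally defined via the global class $\zeta$ and hence automatically commutes with restriction, so the five-lemma step needs no compatibility check between different trivializations of $L$.
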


\begin{proof}
	The map we are looking for is given by:
	
	\[ \begin{array}{cccc}
		(1,\zeta,\zeta^2,\ldots,\zeta^{n-1}): &\bigoplus_{j=0}^{n-1}\mb E_Z(X; L^{\otimes -j}) & \longrightarrow & \mb E_{\pi^{-1}(Z)}(\mr H\pro^n_L)\\
		& (a_0,\ldots, a_{n-1}) & \mapsto & \sum_{0}^{n-1}  a_j\zeta^{j}
	\end{array} \]
	
	\noindent Find opens that trivialise $ L $ over $ X $ and use them with a Mayer-Vietoris argument to reduce to the untwisted case proved in \cite[Theorem 8.2]{Panin-Walter}.
\end{proof}

\begin{defn}
	The classes $ b_i^{L}(\mc E, \omega^L)  $ of the theorem above are the Borel classes associated to $ (V, \omega^L) $ with respect to the twisted Thom structure defined over $ \mb E $. For $ i>n $ and $ i<0 $ set $ b_i^{L}(\mc E, \omega^L)=0  $ and set $ b_0^{L}(\mc E, \omega^L)=1  $. We define the \textit{total twisted Borel class} as:
	\[ b_t^{L}(V, \omega^{L}):=1+b_1^{L}(V,\omega^{L})t+ \ldots + b_n^{L}(V,\omega^{L})t^n \]
\end{defn}

\begin{construction}
	Let  $ (E,\omega^{L}) $ be $ L $-twisted symplectic bundle of rank $ 2r $: we want now to construct a twisted version  of  the \textit{quaternionic flag bundle}. First consider:
	\[ \pi_1: \mr{HFlag}_X^{L}(1, r-1; E):=\mr{HGr}^{L}_X(2; E) \longrightarrow X \]
	Then over $  \mr{HFlag}_X^{L}(1, r-1; E) $ we have the tautological rank 2 twisted bundle $ U_{1,1} $ (the one classified by the identity map of $ \mr{HGr}^{L}_X(2; E) $). The bundle $ U_{1,1} $ is a sub-bundle of $ \pi_1^*E $, therefore we can consider:
	\[ U_{1,1}^{\perp}:=\set{w\in E\st{-1}{4} \ \omega^{L}(w,u)=0 \ \forall \ u \in U_{1,1}} \]
	\noindent that is the tautological sub-bundle of rank $ 2r-2 $ over $ \mr{HFlag}_X^{L}(1, r-1; E)  $. Let us rename $ U_{1,1}^{\perp} $ as:
	\[ W_{1}:=U_{1,1}^{\perp} \]
	This gives us a decomposition:
	\[(\pi_1^*E, \pi_1^*\omega^{L})\simeq \left(U_{1,1}, \omega_{1,1}\right) \perp \left(W_1, \omega_{W_1}\right)   \]
	\noindent where $ \omega_{1,1}:=\restrict{ \pi_1^*\omega^{L}}{U_{1,1}} $ and $ \omega_{W_1}:=\restrict{ \pi_1^*\omega^{L}}{W_1} $.\\
	
	Now we iterate; to ease the notation we will drop the upper and lower scripts from the twisted hyperbolic Grassmannians $ \mr{HGr} $. Consider the natural map:
	\[ p_2: \mr{HFlag}_X^L(1^2,r-2; E):= \mr{HGr}(2; W_1) \longrightarrow \mr{HFlag}_X^{L}(1, r-1; E) \]
	and denote by $ U_{2,2} $ the universal tautological rank 2 bundle over $ \mr{HFlag}_X^L(1^2,r-2; E) $. This bundle is a natural sub-bundle of $ p_2^*W_1 $ and again, naming $ W_2:=U_{2,2}^{\perp} $, we get:
	\[ (p_2^*W_1, p_2^*\omega_{W_1})\simeq  \left(U_{2,2}, \omega_{2,2}\right) \perp \left(W_2, \omega_{W_2}\right)   \]
	\noindent where $ \omega_{2, 2} $ and $ \omega_{W_2} $ are the restrictions of $ p_2^*\omega_{W_1} $ to $ U_{2,2} $ and $ W_2 $ respectively. In particular, if $ \pi_2: \mr{HFlag}_X^L(1^2,r-2; E)\rightarrow X $ is the map given by $ \pi_1\circ p_2 $, we have the following decomposition:
	\[ (\pi_2^*E, \pi_2^*\omega^{L})\simeq p_2^*\left(U_{1,1}, \omega_{1,1}\right) \perp \left(U_{1,2}, \omega_{1,1}\right) \perp \left(U_{2,2}, \omega_{2,1}\right)  \]
	Notice that on $ \mr{HFlag}_X^L(1^2,r-2; E) $ we have three tautological bundles, namely $ U_{1,2}:=p_2^*U_{1,1} $, $ U_{2,2} $ and $ W_2 $.\\
	Proceeding again in the same way, we can inductively construct:
	\[ p_k: \mr{HFlag}_X^L(1^k,r-k; E):=\mr{HGr}(2; W_{k-1})\longrightarrow \mr{HFlag}_X^L(1^{k-1},r-k+1; E) \]
	\noindent together with a natural map:
	\[ \pi_k:  \mr{HFlag}_X^L(1^k,r-k; E) \longrightarrow X \]
	\noindent Over $ \mr{HFlag}_X^L(1^k,r-k; E) $ we have the following (inductively defined) tautological bundles:
	\[ (U_{i,k}, \omega_{i,k}):=p_k^*(U_{i,k-1},  \omega_{i,k-1})\ \ \ \ i=1,\ldots, k-1 \]
	\[ (U_{k,k}, \omega_{k,k}) \]
	\[ (W_k, \omega_{W_k}):=(U_{k,k}^{\perp}, \omega_{U_{k,k}^{\perp}} )\]
	\noindent where $ U_{k,k} $ is the tautological rank 2 bundle of $ \mr{HGr}(2; W_{k-1}) $.	 By construction, we have:
	\[ \pi_k^*(E,\omega^{L})\simeq \displaystyle\bigperp_{i=1}^{k} \left( U_{i,k}, \omega_{i,k} \right) \perp \left( W_k, \omega_{W_k} \right)  \]
\end{construction}	 
\begin{defn}
	Let $ (E, \omega^L) $ be an $ L $-twisted symplectic bundle of rank $ 2r $, then we define the \textit{complete twisted quaternionic flag bundle} as:
	\[ \mr{HFlag}_{X}^{L}(E,\omega^{L}):=\mr{HFlag}_{X}^{L}(1^{r}, 0; E)   \]
	\noindent and we denote its tautological rank 2 bundles as $ (\mc U_i, \omega_i^{L}):=(U_{1,r}, \omega_{i,r} )$ for $ i=1,\ldots, r $.
\end{defn}

Let $ q:  \mr{HFlag}_{X}^{L}(E,\omega^{L}) \rightarrow X $ be the projection map, then by construction we have:
\[ q^*(E, \omega^{L}) \simeq \bigoplus_{i=1}^{r} \left( \mc U_i, \omega^{L}_i \right) \]

\begin{defn}
	Let $ (E, \omega^L) $ be an $ L $-twisted symplectic bundle of rank $ 2r $ and let $ \mr{HFlag}_{X}^{L}(E,\omega^{L}) $ be its associated complete flag variety. Given $ \mc U_i $ tautological rank 2 bundle over $ \mr{HFlag}_{X}^{L}(E,\omega^{L})  $, we define the $ i^{th} $ \textit{twisted Borel root} of $ (E, \omega^{L}) $ to be:
	\[ u_i:=b\left( \mc U_i, \omega^{L}_i \right) \]
\end{defn}

\begin{defn}
	Let $ GL_{2,\det}^{\times r} $ be the following sub-group of $ GL_2^{\times r} $:
	\[ GL_{2,\det}^{\times r}:=\set{(g_1,\ldots,g_r)\in GL_2^{\times r}\st{-1}{4} \ \det(g_1)=\ldots=\det(g_r)}\sseq GL_2^{\times r} \]
\end{defn}
We have an inclusion $ SL_2^{\times r}\sseq GL_{2,\det}^{\times r} $ fitting into the following exact sequence:
\[ 1 \rightarrow SL_2^{\times r} \longrightarrow GL_{2,\det}^{\times r} \stackrel{\det(-)}{\longrightarrow} \mb G_m \rightarrow 1 \]
\begin{rmk}\label{ch3:_rmk_BGL_det}
	Let $ \oocatname{D}_r $ be the universal bundle over $ \mc BGL_{2,\det}^{\times r} $. The quotient stack $ \mc BGL_{2,\det}^{\times r} $ is the stack classifying $ r $-tuples of rank 2 vector bundles $ V_i $, over some scheme $ X $, together with isomorphisms $ \rho_i: \det(V_i)\stackrel{\sim}{\rightarrow} L $ for $ L \in \mr{Pic}(X) $. In other words, if  $ \oocatname{V}:=(\set{V_i}_i, L, \set{\rho_i}_i ) $ denotes a collection of such objects over $ X $, then there exists a classifying map $ f_{\oocatname{V}} $ fitting in the following cartesian diagram:
	\begin{center}
		\begin{tikzpicture}[baseline={(0,0)}, scale=1.75]
			\node (a) at (0,1) {$ \oocatname{V} $};
			\node (b) at (1, 1) {$ \oocatname{D}_r $};
			\node (c)  at (0,0) {$  \mc X $};
			\node (d) at (1,0) {$ \mc BGL_{2,\det}^{\times r}  $};
			\node (e) at (0.2,0.8) {$ \ulcorner  $};
			\node (f) at (0.5,0.5) {$  $};

			\path[font=\scriptsize,>= angle 90]
			
			(a) edge [->] node [above ] {$  $} (b)
			(a) edge [->] node [left] {$  $} (c)
			(b) edge[->] node [right] {$  $} (d)
			(c) edge [->] node [below] {$ f_{\oocatname{V}} $} (d);
		\end{tikzpicture}
	\end{center}
	The map:
	\[  GL_{2,\det}^{\times r} \stackrel{\det(-)}{\longrightarrow} \mb G_m \]
	\noindent induces a map:
	\[ \widetilde{\det}: \mc BGL_{2,\det}^{\times r} \stackrel{}{\longrightarrow} \mc B\mb G_m   \]
	Denote by $ \mc U_{\mc B\mb G_m} $ the universal bundle of $ \mc B\mb G_m $. The universal bundle $ \oocatname{D}_r $ is given by an $ r $-tuple of rank 2 bundles $ \mc V_i $, together with isomorphisms $ \det(\mc V_i)\simeq \widetilde{\det}^*\mc U_{\mc B\mb G_m} $. 
\end{rmk}
\vspace{1cm}


The inclusion $ SL_2^{\times r}\sseq GL_{2,\det}^{\times r}  $ gives rise to a natural map:
\begin{equation*}
	\mf i_0:  BSL_2^{\times r} \longrightarrow  BGL_{2,\det}^{\times r}
\end{equation*}
Let $\mo{}(1)$ be the tautological bundle of $ B\mb G_m $; with a little abuse of notation we will denote with $ \mo{}(1) $ also the pullback to $ BGL_{2,\det}^{\times r} $ of the tautological bundle of $ B\mb G_m $ along the natural map $ BGL_{2,\det}^{\times r} \stackrel{}{\longrightarrow} B\mb G_m $. The pullback $ \mf i_0^*\mo{}(1) $ is isomorphic to the determinant of the rank 2 tautological bundles of $ BSL_2^{\times r} $ and therefore it gets trivialised. Let $ \vartheta: \det(\mf i_0^*\mo{}(1))\stackrel{\sim}{\rightarrow} \mo{BSL_2^{\times r}} $ be such trivialization. For any $ SL $-oriented spectrum $ \mr A  $, via the (inverse of the) Thom isomorphism associated to $ \vartheta $, we have an identification map:
\[ \vartheta_*: \mr A^{\bullet, \bullet}(BSL_2^{\times r}; \mf i_0^*\mo{}(1)) \stackrel{\sim}{\longrightarrow} \mr A^{\bullet,	\bullet}(BSL_2^{\times r}) \]
Then the pullback $ \mf i_0^* $ induces the two following maps:
\[ \mf i_0^*: \mr A^{\bullet,\bullet}(BGL_{2,\det}^{\times r}) \longrightarrow \mr A^{\bullet,\bullet}(BSL_2^{\times r}) \]
\[ \mf i_1^*:=\vartheta_*\circ \mf i_0^*: \mr A^{\bullet,\bullet}(BGL_{2,\det}^{\times r}; \mo{}(1))\longrightarrow \mr A^{\bullet,\bullet}(BSL_2^{\times r}) \]
Considering $ \mf i_0^* $ and $ \mf i_1^* $ together we get the map:
\begin{equation}\label{ch3:_eq_GL_det_MAP}
	\mf i^*: \mr A^{\bullet,\bullet}(BGL_{2,\det}^{\times r})\oplus \mr A^{\bullet,\bullet}(BGL_{2,\det}^{\times r}; \mo{}(1)) \longrightarrow \mr A^{\bullet,\bullet}(BSL_{2}^{\times r})
\end{equation}

\begin{pr}\label{ch3:_MEC_4.1_BGL_DET}
	Let $ \mr A $ be an $ SL_{\eta} $-oriented ring spectrum. 
	Then the map \eqref{ch3:_eq_GL_det_MAP}:
	\[ \mf i^*: \mr A^{\bullet}(\mc BGL_{2,\det}^{\times r})\oplus \mr A^{\bullet}(\mc BGL_{2,\det}^{\times r}; \mo{}(1)) \longrightarrow  \mr A^{\bullet}(\mc BSL_{2}^{\times r}) \]
	\noindent is an isomorphism.
\end{pr}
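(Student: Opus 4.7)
The plan is to adapt the proof of \Cref{ch3:_4.1_MEC} to the current setting, with $GL_{2,\det}^{\times r}$ playing the role of $GL_n$ and $SL_2^{\times r}$ playing the role of $SL_n$. The key geometric input is the same: by \Cref{ch3:_rmk_BGL_det}, the map $BSL_2^{\times r}\to BGL_{2,\det}^{\times r}$ is identified with the $\mb G_m$-principal bundle $\mo{}(1)\setminus\{0\}\to BGL_{2,\det}^{\times r}$ associated to the line bundle $\mo{}(1)$ classified by $\widetilde{\det}:\mc BGL_{2,\det}^{\times r}\to \mc B\mb G_m$.

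First, I would apply homotopy invariance to identify $\mr A^\bullet(BGL_{2,\det}^{\times r})\simeq \mr A^\bullet(\mo{}(1))$, and write down the localization long exact sequences (both untwisted and $\mo{}(1)$-twisted) associated to the closed/open decomposition $BGL_{2,\det}^{\times r}\subset \mo{}(1)\supset \mo{}(1)\setminus\{0\}=BSL_2^{\times r}$. Since $\mr A$ is $SL_\eta$-oriented, by \cite[Lemma 4.3]{Motivic_Euler_Char} cup-product with the Euler class of any line bundle is zero, so both long exact sequences split into short exact sequences; for the untwisted one:
\[ 0\to \mr A^{a,b}(BGL_{2,\det}^{\times r})\to \mr A^{a,b}(BSL_2^{\times r})\xrightarrow{\partial}\mr A^{a-1,b-1}(BGL_{2,\det}^{\times r};\mo{}(1))\to 0 \]

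Next, I would exhibit the candidate splitting $\mf i_1^*$, which by construction is cup-product with the symbol $\langle t_{can}\rangle\in \mr A^{0,0}(BSL_2^{\times r};\mf i_0^*\mo{}(-1))$ associated to the tautological section (cf.\ \Cref{ch2:_symbol_of_line_bundle_section}), followed by the pullback $\pi_0^*$. The $\mr A^\bullet(BGL_{2,\det}^{\times r};\mo{}(1))$-linearity of $\partial$ (via the $\pi_0^*$-action) immediately gives the identity $\partial(\mf i_1^*(x))=\bar\partial(\langle t_{can}\rangle)\cup x$, reducing the proof to showing that cup-product with the element $\bar\partial(\langle t_{can}\rangle)\in \mr A^{-1,-1}(BGL_{2,\det}^{\times r})$ is an isomorphism.

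The main obstacle, and the only step requiring actual work, is this last invertibility claim. I would treat it by a Mayer--Vietoris argument on a Zariski cover trivialising $\mo{}(1)$ over each finite-level approximation $B_m GL_{2,\det}^{\times r}$: after such a trivialisation the element $\bar\partial(\langle t_{can}\rangle)$ restricts to $\eta$ by \cite[Lemma 6.4]{Ananyevskiy_SL_oriented}, which is invertible since $\mr A$ is $\eta$-invertible. Patching via Mayer--Vietoris yields the result at each finite level, and passing to the limit via the mapping-spectrum identification $\mr A(BG)\simeq \lim_m \iMap(B_m G,\mr A)$ (exactly as in the last paragraph of the proof of \Cref{ch3:_4.1_MEC}) upgrades the bidegree-wise isomorphisms to an equivalence of mapping spectra, producing the claimed isomorphism.
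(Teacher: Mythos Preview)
Your proposal is correct and follows essentially the same route as the paper's own proof: identify $BSL_2^{\times r}\to BGL_{2,\det}^{\times r}$ with the complement of the zero section of $\mo{}(1)$, split the localisation sequence using \cite[Lemma 4.3]{Motivic_Euler_Char}, and show that $\partial\circ\mf i_1^*=\bar\partial(\langle t_{can}\rangle)\cup -$ is invertible via a Mayer--Vietoris reduction on finite approximations together with \cite[Lemma 6.4]{Ananyevskiy_SL_oriented}. The only cosmetic difference is that the paper phrases the passage to finite levels via Milnor's $\lim^1$-sequence rather than the mapping-spectrum limit, but the substance is identical.
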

\begin{proof}
	Recall that we have:
	\[ 1 \rightarrow SL_2^{\times r} \longrightarrow GL_{2,\det}^{\times r} \stackrel{\det(-)}{\longrightarrow} \mb G_m \rightarrow 1 \]
	The bundle $ \mo{}(1) $ is the tautological bundle of $ B\mb G_m $ pulled back to $ B GL_{2,\det}^{\times r}  $, and we have that $ BSL_2^{\times r}\simeq \mo{}(1)\setminus \set{0} $. Then, we can identify $ BSL_2^{\times r} \longrightarrow  BGL_{2,\det}^{\times r} $ with the natural map $ \mo{}(1)\setminus \set{0}\rightarrow BGL_{2,\det}^{\times r} $ given by the composition of the open immersion $ j: \mo{}(1)\setminus\set{0}\into \mo{}(1) $ with the projection map from $ \mo{}(1) $. From here, the same argument used to prove the first part of \cref{ch3:_4.1_MEC} applies verbatim replacing $ BSL_n $ with $ BSL_2^{\times r} $ and $ BGL_n $ with $ BGL_{2,\det}^{\times r} $. We will sketch the proof again for more clarity, but we will leave the details to the reader. \\
	By homotopy invariance we can identify $ \mr A(BGL_{2,\det}^{\times r})\simeq \mr A(\mo{}(1)) $, and we get a localization sequence:
	\begin{align}\label{ch3:_eq_BGL_det_Untwisted}
		\begin{split}
			\ldots \rightarrow \mr A^{a,b}\left( BGL_{2,\det}^{\times r}\right)  &\stackrel{}{\rightarrow} \mr A^{a,b}\left( {B}SL_2^{\times r} \right) -\!\!\!-\!\!\!-\ldots  \\
			\ldots&\stackrel{\partial_{a,b}}{\rightarrow} \mr A^{a-1,b-1}\left( BGL_{2,\det}^{\times r}; \mo{}(1) \right)  \stackrel{e(\mo{}(1))\cup \cdot}{\rightarrow} \mr A^{a+1,b}\left( BGL_{2,\det}^{\times r} \right)  \rightarrow \ldots
		\end{split}
	\end{align}
	By \cite[Lemma 4.3]{Motivic_Euler_Char} this sequence splits into short exact sequences:
	\[ 	0 \rightarrow \mr A^{a,b}\left( BGL_{2,\det}^{\times r}\right)  \stackrel{}{\rightarrow} \mr A^{a,b}\left( {B}SL_2^{\times r} \right) \stackrel{\partial_{a,b}}{\rightarrow} \mr A^{a-1,b-1}\left( BGL_{2,\det}^{\times r}; \mo{}(1)\right) \rightarrow 0  \]
	To conclude, we need to find a splitting for $ \partial_{a,b} $ and the candidate map is given by:
	\[ \mf i_1^*:= \vartheta_*\circ \mf i_0^*: \mr A^{\bullet}(BGL_{2,\det}^{\times r}; \mo{}(1))\longrightarrow \mr A^{\bullet}(BSL_2^{\times r})  \]
	We now want to prove that $ \partial\circ \mf i_1^* $ is an isomorphism. Let $ \bar\partial $ be the boundary map in the twisted localization sequence:
	\begin{equation*}
		\ldots \rightarrow \mr A^{a,b}\left( {B}GL_n; \mo{}(-1) \right)  \stackrel{}{\rightarrow} \mr A^{a,b}\left( {B}SL_n; \mf i_0^*\mo{}(-1)\right)  \stackrel{\bar\partial_{a,b}}{\rightarrow} \mr A^{a-1,b-1}\left( {B}GL_n\right)  \stackrel{}{\rightarrow} \ldots 
	\end{equation*}
	Let $ \langle t_{can}\rangle \in \mr A^{0}(BSL_2^{\times r}; \mf i_0^*\mo{}(1))\simeq \mr A^{0}(BSL_2^{\times r}; \mf i_0^*\mo{}(-1)) $ given by the tautological section of $ \mo{}(1) $. Then we have:
	\[ \partial\circ\mf i_1^*(-)=\bar{\partial}(\langle t_{can}\rangle) \cup - \]
	To check that multiplication by $ \bar{\partial}(\langle t_{can}\rangle)  $ is an isomorphism, by the Milnor's $ \lim^1 $-exact sequence, we can reduce to prove the claim on the finite level approximations $ B_mSL_2^{\times r} $ and $ B_mGL_{2,\det}^{\times r} $. By a Mayer-Vietoris argument, we can further restrict to opens of $ B_mGL_{2,\det}^{\times r} $ where $ \mo{}(1) $ gets trivialised. But then the restriction of $  \bar{\partial}(\langle t_{can}\rangle)  $ becomes just $ \eta $ by \cite[Lemma 6.4]{Ananyevskiy_SL_oriented}, and hence the map $ \bar{\partial}(\langle t_{can}\rangle) \cup -  $ is invertible as claimed.
\end{proof}

\begin{thm}[Twisted Symplectic Splitting Principle]
	Let $ \mb E $ be an $ SL_{\eta} $-oriented ring spectrum with its canonical twisted Thom structure and let $ X\in \catname{Sm}_{\bigslant{}{S}}  $ be a smooth $ S $-scheme. Take $ L\in \mr{Pic}(X) $, let $ (E,\omega^L)  $ be a twisted symplectic bundle and let $ q:\mr{HFlag}^L_X(E,\omega^L)\rightarrow X  $ be the associated complete twisted quaternionic flag bundle. Then the map:
	\[ q^*: \mb E^{\bullet}(X) \longrightarrow \mb E^{\bullet}\left( \mr{HFlag}^{L}_{X}(E,\omega^{L}) \right) \]
	\noindent is injective. Moreover we have that:
	\[ q^*b_t^{L}(E,\omega^{L})=\prod_{j=1}^{r} b_t(\mc U_i, \omega^{L}_i) \]
	\noindent with $ (\mc U_i,\omega^{L}_i) $ the universal rank $ 2 $ bundles of $ \mr{HFlag}_{X}^{L}(E, \omega^{L})  $.
\end{thm}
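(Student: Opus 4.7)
The plan is to establish injectivity and the product formula separately. For injectivity, the complete twisted quaternionic flag bundle is built as an iterated tower
\[ \mathrm{HFlag}^L_X(E,\omega^L) = \mathrm{HFlag}^L_X(1^r, 0; E) \xrightarrow{p_r} \cdots \xrightarrow{p_2} \mathrm{HFlag}^L_X(1, r-1; E) \xrightarrow{p_1} X, \]
and by construction each $p_k$ coincides with the twisted quaternionic projective bundle $\mathrm{HGr}^L(2; W_{k-1}) \to \mathrm{HFlag}^L_X(1^{k-1}, r-k+1; E)$ associated to the rank $2(r-k+1)$ twisted symplectic bundle $(W_{k-1}, \omega_{W_{k-1}})$ sitting over the $(k-1)$-st stage. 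By Theorem \ref{ch3:_Twisted_Proj_Bun_Thm} applied fiberwise at each stage, $p_k^*$ exhibits the $\mathbb{E}$-cohomology of the total space as a free module over the base spanned by powers of the Borel root of the new tautological rank $2$ subbundle. In particular every $p_k^*$ is split injective, and composing gives injectivity of $q^* = p_1^* \circ \cdots \circ p_r^*$.

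For the factorization of the total Borel class I would induct on $r$. The base case $r = 1$ is tautological, since $\mathrm{HFlag}^L_X(E, \omega^L) = X$ and $\mathcal{U}_1 = E$. For the inductive step, pull back along $\pi_1: \mathrm{HFlag}^L_X(1, r-1; E) \to X$: the construction furnishes the orthogonal decomposition of twisted symplectic bundles
\[ \pi_1^*(E, \omega^L) \simeq (U_{1,1}, \omega_{1,1}) \perp (W_1, \omega_{W_1}), \]
with $U_{1,1}$ of rank $2$ and $W_1$ of rank $2(r-1)$. Applying the twisted Cartan sum formula (Corollary \ref{ch3:_Cartan_Sum_Formula_Twisted}) gives
\[ \pi_1^* b_t^L(E, \omega^L) = b_t(U_{1,1}, \omega_{1,1}) \cdot b_t^L(W_1, \omega_{W_1}). \]
The map $\mathrm{HFlag}^L_X(E,\omega^L) \to \mathrm{HFlag}^L_X(1, r-1; E)$ is itself the complete twisted quaternionic flag bundle of $(W_1, \omega_{W_1})$, whose universal rank $2$ pieces are exactly $(\mathcal{U}_2, \omega_2^L), \ldots, (\mathcal{U}_r, \omega_r^L)$. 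By the inductive hypothesis its pullback takes $b_t^L(W_1, \omega_{W_1})$ to $\prod_{i=2}^{r} b_t(\mathcal{U}_i, \omega_i^L)$, and since $\mathcal{U}_1$ is the pullback of $U_{1,1}$, combining the two identities yields the claim.

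The main obstacle is really the twisted Cartan sum formula itself; this is the step where twisting enters nontrivially, because the Thom classes of $(U_{1,1}, \omega_{1,1})$ and $(W_1, \omega_{W_1})$ live in distinct twists whose tensor product must be matched with the twist of $(E, \omega^L)$. This has been handled independently in Corollary \ref{ch3:_Cartan_Sum_Formula_Twisted} by Zariski-locally trivializing $L$ and invoking the untwisted symplectic Cartan formula of \cite{Panin-Walter}, together with a Mayer--Vietoris patching. Once that input is granted, the splitting principle is a clean telescoping induction and, as a byproduct, identifies the twisted Borel roots $u_i$ introduced earlier with the twisted Borel classes of the universal rank $2$ subbundles on the flag.
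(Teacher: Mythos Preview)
Your argument for injectivity is fine and matches the paper's: iterated application of the Twisted Quaternionic Projective Bundle Theorem shows each $p_k^*$ is split injective, hence so is their composite $q^*$.

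The product formula, however, has a circularity problem. You invoke Corollary~\ref{ch3:_Cartan_Sum_Formula_Twisted} (the twisted Cartan sum formula) as an input, but in the paper that corollary is \emph{deduced from} the very splitting principle you are trying to prove: its proof reads ``Using the twisted symplectic splitting principle, we can just follow the same steps as in \cite[Theorem 10.5]{Panin-Walter}.'' Your claim that the Cartan formula ``has been handled independently \ldots\ by Zariski-locally trivializing $L$ \ldots\ together with a Mayer--Vietoris patching'' is not what the paper does, and it is not clear such a patching works: Mayer--Vietoris computes cohomology groups, but to verify an \emph{identity} of global classes from its validity on an open cover you would need the restriction maps to be jointly injective, which is not given. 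Indeed, even in the untwisted case \cite[Theorem 10.5]{Panin-Walter} is proved via the splitting principle, not by a local-to-global argument.

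The paper avoids this loop by a different route. Since $q^*(E,\omega^L)\simeq\bigoplus_i(\mathcal U_i,\omega_i^L)$ with each $\mathcal U_i$ a rank~$2$ bundle together with $\det(\mathcal U_i)\simeq L$, there is a classifying map $p\colon \mathrm{HFlag}^L_X(E,\omega^L)\to \mathcal BGL_{2,\det}^{\times r}$. It therefore suffices to prove the product identity for the universal $r$-tuple $(\mathcal V_i,\omega_{i,\mathrm{can}})$ on $\mathcal BGL_{2,\det}^{\times r}$. Then Proposition~\ref{ch3:_MEC_4.1_BGL_DET} gives an \emph{injection} $\mathfrak i_1^*\colon \mathrm A^{\bullet}(\mathcal BGL_{2,\det}^{\times r};\mathcal O(1))\hookrightarrow \mathrm A^{\bullet}(\mathcal BSL_2^{\times r})$, under which the twisted Borel classes become ordinary (untwisted) Borel classes of the universal $SL_2$-bundles. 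At that point the \emph{untwisted} Cartan sum formula \cite[Theorem 10.5]{Panin-Walter} applies and the identity pulls back. So the key idea you are missing is the reduction to the universal case on $\mathcal BGL_{2,\det}^{\times r}$ and the untwisting via the injection into $\mathcal BSL_2^{\times r}$; this is what replaces your appeal to the twisted Cartan formula and breaks the circularity.
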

\begin{proof}
	The first claim is just an iterated application of the Twisted Projective Bundle \cref{ch3:_Twisted_Proj_Bun_Thm}. For the remaining claim, we already noticed that $ q^*E \simeq \bigoplus_{i=1}^{r} \left( \mc U_i, \omega^{L}_i \right)  $. Now these $ (\mc U_i, \omega^{L}_i) $ are twisted symplectic bundles of rank 2, but that means they are just rank 2 vector bundles together with isomorphisms $ \omega^{L}_i: \det\left(\mc U_i\right) \stackrel{\sim}{\rightarrow} \restrict{L}{\mc U_i} $. Therefore, we get a natural map:
	\[ p: \mr{HFlag}^{L}_X(E,\omega^{L}) \longrightarrow \mc{B}GL_{2,\det}^{\times r} \]
	\noindent classifying $ (\set{\mc U_i}, L, \set{\omega_i}) $ (cf. \cref{ch3:_rmk_BGL_det}).
	The universal bundle $ \oocatname{D}_r $ of $ \mc{B}GL_{2,\det}^{\times r}  $ is given by an $ r $-tuple of rank $ 2 $ bundles $ 	\mc V_i $, together with isomorphisms $ \rho_i: \det(\mc V_i)\stackrel{\sim}{\rightarrow}\mo{}(1) $, where $ \mo{}(1) $ is  the pullback of the tautological bundle over $ \mc B\mb G_m $. We can regard the $ \mc V_i $ as twisted symplectic bundles, with twisted symplectic forms given by $ \omega_{i,can}: \Lambda^2 \mc V_i\simeq \det(\mc V_i) \stackrel{\rho_i}{\rightarrow} \mo{}(1)  $. Then by construction of $ p $, we get that:
	\[ p^*(\mc V_i, \omega_{i,can})\simeq (\mc U_i, \omega_i^{L}) \]
	
	If we prove the claim of our theorem for the bundle $ \bigoplus (\mc V_i, \omega_{i,can}) $, then, via the pullback map $ p^* $, we get the relation we want for $ q^*(E,\omega^{L}) $.
	This means that it is enough to prove the following:
	\[ b_t^{\mo{}(1)}\left(\bigoplus (\mc V_i, \omega_{i,can})\right)=\prod b_t^{\mo{}(1)}((\mc V_i, \omega_{i,can})) \]
	By \cref{ch3:_MEC_4.1_BGL_DET}, we have an injective map:
	\begin{equation}\label{ch3:_eq_inj_BGL_det}
		\mf i_1^*:\mr A^{\bullet}\left( \mc{B}GL_{2,\det}^{\times r}; \mo{}(1)\right) \into \mr A^{\bullet}\left( \mc{B}SL_2^{\times r} \right)
	\end{equation}
	\noindent induced by $ \mf i_0: \mc{B}SL_2^{\times r} \rightarrow \mc{B}GL_{2,\det}^{\times r} $. For each $ i $, the pullback $ \mf i_0^* \mc V_i $ is isomorphic to the universal rank 2 $ SL $-vector bundle $ \mc W_i $ of $ \mc{B}SL_2^{\times r} $.  Recall that the map $ \mf i_1^* $ was given by the pullback map $ \mf i_0^* $, together with the inverse of the Thom isomorphism associated to the determinant of the $ SL $-vector bundles $ \mf j^*\mc V_i $. By injectivity of \eqref{ch3:_eq_inj_BGL_det}, we can therefore reduce ourselves to prove:
	\[ b_t\left( \bigoplus (\mc W_i, \psi_{i,can}) \right)=\prod (\mc W_i, \psi_{i,can}) \]
	\noindent with $ \psi_{i,can} $ the canonical symplectic form given by their $ SL $-structure. But this amounts to the usual Cartan Sum formula for (untwisted) symplectic bundles, and therefore we can conclude by \cite[Theorem 10.5]{Panin-Walter}.
\end{proof}

\begin{co}[Twisted Cartan Sum Formula]\label{ch3:_Cartan_Sum_Formula_Twisted}
	If we have $ \left( F, \psi^{L} \right)\simeq \left( E_1, \omega_1^{L} \right)\oplus (E_2,\omega_2^{L})  $ a direct sum of $ L $-twisted symplectic bundles, then:
	\[ b_t^{L}(F,\psi^{L})=b_{t}^{L}(E_1,\omega_1^{L})b_t^{L}(E_2,\omega_2^{L}) \]
	\[ b_i^{L}(F, \psi^{L})=\sum_{j=0}^{i} b_{i-j}^{L}(E_1,\omega_1^{L})b_{j}^{L}(E_2, \omega_2^{L}) \]
\end{co}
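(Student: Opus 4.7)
The plan is to deduce the corollary directly from the Twisted Symplectic Splitting Principle just proved, by constructing an iterated flag bundle compatible with the decomposition $(F, \psi^L) \simeq (E_1, \omega_1^L) \oplus (E_2, \omega_2^L)$.

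First, I would build a map $q: Y \to X$ over which both summands split into rank 2 twisted symplectic bundles. Let $q_1: \mr{HFlag}^L_X(E_1, \omega_1^L) \to X$ be the complete twisted quaternionic flag bundle of $E_1$; on it, $q_1^*E_2$ remains an $L$-twisted symplectic bundle, so form the iterated flag $q_2: \mr{HFlag}^{q_1^*L}(q_1^*E_2, q_1^*\omega_2^L) \to \mr{HFlag}^L_X(E_1, \omega_1^L)$ and set $q := q_1 \circ q_2 : Y \to X$. By construction $q^*(F, \psi^L)$ splits orthogonally as the direct sum of the rank 2 tautological twisted symplectic bundles $(\mc U_i, \omega_i^L)_{i=1}^{r_1}$ coming from the flag of $E_1$ together with the rank 2 tautological twisted symplectic bundles $(\mc V_j, \nu_j^L)_{j=1}^{r_2}$ coming from the flag of $E_2$, so $Y$ also realizes a complete twisted quaternionic flag of $(F, \psi^L)$ in the sense of the previous construction.

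Next, I would record that $q^*$ is injective on $\mb E$-cohomology: by the Twisted Quaternionic Projective Bundle Theorem \ref{ch3:_Twisted_Proj_Bun_Thm} applied iteratively (exactly as in the first half of the splitting principle), both $q_1^*$ and $q_2^*$ are split injections, hence so is their composition. Applying the splitting principle to each of $E_1$, $E_2$ and $F$ gives
\[ q^*b_t^L(E_1,\omega_1^L) = \prod_{i=1}^{r_1} b_t(\mc U_i, \omega_i^L), \qquad q^*b_t^L(E_2,\omega_2^L) = \prod_{j=1}^{r_2} b_t(\mc V_j, \nu_j^L), \]
\[ q^*b_t^L(F, \psi^L) = \prod_{i=1}^{r_1} b_t(\mc U_i, \omega_i^L) \cdot \prod_{j=1}^{r_2} b_t(\mc V_j, \nu_j^L). \]
Combining these identities yields $q^*b_t^L(F, \psi^L) = q^*\!\left( b_t^L(E_1, \omega_1^L)\, b_t^L(E_2, \omega_2^L) \right)$, and injectivity of $q^*$ upgrades this to the desired equality $b_t^L(F, \psi^L) = b_t^L(E_1, \omega_1^L)\, b_t^L(E_2, \omega_2^L)$ in $\mb E^{\bullet}(X)\llbracket t \rrbracket$. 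The component-wise formula then follows by comparing coefficients of $t^i$ on both sides.

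The only mildly delicate step is verifying in Step 1 that the iterated construction $Y$ genuinely factors through $\mr{HFlag}^L_X(F, \psi^L)$ in a way that identifies the rank 2 tautological factors of $F$ on $Y$ with the disjoint union of the tautological factors coming from the $E_1$- and $E_2$-flags. This is essentially the functoriality of $\mr{HFlag}^L$ with respect to orthogonal $L$-twisted decompositions, which can be checked after locally trivializing $L$ via a Mayer--Vietoris argument, reducing to the classical untwisted case already handled in \cite{Panin-Walter}.
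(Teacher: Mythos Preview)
Your proposal is correct and follows essentially the same route as the paper, which simply says to invoke the twisted symplectic splitting principle and then repeat the argument of \cite[Theorem 10.5]{Panin-Walter}; your write-up spells out precisely those steps in the twisted setting. The one point you flag as delicate---that $Y$ maps to $\mr{HFlag}^L_X(F,\psi^L)$ compatibly with the tautological rank $2$ factors---is indeed the only thing to check, and your reduction to the untwisted case via local trivialisation of $L$ is adequate.
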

\begin{proof}
	Using the twisted symplectic splitting principle, we can just follow the same steps as in \cite[Theorem 10.5]{Panin-Walter}.
\end{proof}

Remember that the spectrum $ \mr{BO}^{\bullet,\bullet} $ defined in \cite{Panin-Walter} is $ (8,4) $-periodic, with periodicity isomorphism:
\[ \mr{BO}^{\bullet+8,\bullet+4} \stackrel{\cdot \cup \Sigma^{8,4}\gamma}{\longrightarrow } \mr{BO}^{\bullet,\bullet}    \]
\noindent with $ \gamma \in \mr{BO}^{-8,-4}(pt)  $ the element corresponding to $ \mbbm 1 \in \mr{BO}^{0,0}(pt)  $ under the periodicity isomorphism:
\[ \mr{BO}^{0,0}(pt)\simeq \mr{GW}^{[0]}_{0}(pt)\simeq \mr{GW}^{[-4]}_{0}(pt)\simeq\mr{BO}^{-8,-4}(pt)   \]
\noindent and $ \mr{GW}^{[n]}_{i} $ are the higher Grothendieck-Witt groups of  \cite{schlichting_Hermitian_2010}. We will call $ \gamma $ the \textit{Bott element of Hermitian K-Theory}. This element will induce the 4 periodicity on $ \mr{KW} $ once we invert the $ \eta $-map. \\

Now  to explicitly compute the Euler classes of $ \widetilde{\mo{}^{\pm}}(m) $, we need a twisted version of  \cite[Lemma 8.2]{Ananyevskiy_FTL_Witt}:

\begin{lemma}[Ananyevskiy]\label{ch3:_Ana_Stab_Op_Lemma}
	Let $ E_1,E_2,E_3 $ be rank 2 bundles over some scheme $ X\in \catname{Sm}_{\bigslant{}{S}}  $, with the determinants $ L_1,L_2,L_3 $ respectively, together with their canonical twisted symplectic structures; let $ E:=E_1\otimes E_2 \otimes E_3 $ be the $ L_1\otimes L_2\otimes L_3 $-twisted symplectic bundle of rank 8, with the induced twisted symplectic structure. Let $ L:=L_1\otimes L_2\otimes L_3  $, $ \xi_i:=b^{L}_1(E_i) \in \mr{KW}^{4,2}(X; L_i) $ and denote with $ \sigma(n_1,n_2,n_3) $ the sum of all the monomials lying in the orbit of $ \xi_1^{n_1}\xi_2^{n_2}\xi_3^{n_3} $ under the action of $ S_3 $. Then:
	\begin{align*}
		& b_1^{L}(E)=\gamma \sigma(1,1,1)\\
		& b_2^{L}(E)=\gamma \sigma(2,2,0)-2\sigma(2,0,0)\\
		& b_3^{L}(E)=\gamma \sigma(3,1,1) -8\sigma(1,1,1)\\
		& b_4^{L}(E)=\gamma \sigma(2,2,2)+\sigma(4,0,0)-2\sigma(2,2,0)
	\end{align*} 
\end{lemma}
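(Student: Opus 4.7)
The plan is to reduce to the untwisted version of this lemma, which is Ananyevskiy's original \cite[Lemma 8.2]{Ananyevskiy_FTL_Witt}. The reduction proceeds by universality followed by pullback to an iterated $\mb G_m$-torsor that rigidifies all three determinants simultaneously.

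\textbf{Universality.} By naturality of twisted Borel classes under pullback, it suffices to verify the four identities in the universal situation where $X$ is a smooth approximation of $\mc BGL_2^{\times 3}$ and $E_1, E_2, E_3$ are the three tautological rank $2$ bundles classified by the three projections, with determinants $L_i$ the pullbacks of $\mo{}(1)$ along the $i$-th projection. Indeed, once the relations are established universally, any triple of rank $2$ bundles on an arbitrary $X\in\catname{Sm}_{\bigslant{}{S}}$ is obtained by pullback along a classifying map $\phi:X\to \mc BGL_2^{\times 3}$, and pullback of twisted Borel classes is compatible with the twisted symplectic Thom structure by construction.

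\textbf{Reduction to the $SL$-bundle case.} Consider the iterated $\mb G_m^{\times 3}$-torsor $p\colon \mc BSL_2^{\times 3}\to \mc BGL_2^{\times 3}$ obtained by applying the tautological torsor $\mc BSL_2\to \mc BGL_2$ in each factor. After pulling back along $p$, each tautological rank $2$ bundle acquires a canonical trivialization of its determinant, becoming an $SL_2$-bundle with its canonical (untwisted) symplectic form. Under the Thom isomorphism associated to the trivialization of $p^*L_i$, each twisted class $p^*\xi_i$ is identified with the ordinary untwisted Borel class $\widetilde \xi_i$ of the pulled-back $SL_2$-bundle, and the twisted symplectic structure on $E=E_1\otimes E_2\otimes E_3$ pulls back to the usual untwisted symplectic structure on the tensor product of the three $SL_2$-bundles. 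Consequently, \cite[Lemma 8.2]{Ananyevskiy_FTL_Witt} applied on $\mc BSL_2^{\times 3}$ gives exactly the four formulas claimed, but with $\xi_i$ replaced by $\widetilde\xi_i$ and without twists.

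\textbf{Transport back.} To descend these identities back to $\mc BGL_2^{\times 3}$, we iterate \Cref{ch3:_4.1_MEC} one factor at a time. Applying that proposition three times, factor by factor, produces an isomorphism of $\mr{KW}^\bullet(S)$-modules
\[
\mf i^*\colon \bigoplus_{\epsilon\in\{0,1\}^3} \mr{KW}^{\bullet}\!\left(\mc BGL_2^{\times 3};\, L_1^{\otimes \epsilon_1}\otimes L_2^{\otimes \epsilon_2}\otimes L_3^{\otimes \epsilon_3}\right) \stackrel{\sim}{\longrightarrow} \mr{KW}^{\bullet}(\mc BSL_2^{\times 3}).
\]
Each monomial $\xi_1^{n_1}\xi_2^{n_2}\xi_3^{n_3}$ naturally lives in the twisted summand indexed by $(n_1\bmod 2,n_2\bmod 2,n_3\bmod 2)$, using that $L^{\otimes 2}$ is canonically a square in Witt-twisted cohomology via \cite[Theorem 4.3]{Ananyevskiy_SL_oriented}. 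Grouping the terms on both sides of each claimed identity by parity triple places each identity entirely inside a single summand of the above direct sum, so that $\mf i^*$ transports the untwisted identity on $\mc BSL_2^{\times 3}$ to the desired twisted identity on $\mc BGL_2^{\times 3}$.

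\textbf{Main obstacle.} The principal point requiring care is the iterated use of \Cref{ch3:_4.1_MEC} to assemble the eightfold splitting $\mf i^*$. One must verify that the three localization boundary maps $\bar\partial(\langle t_{can,i}\rangle)$ arising from the three separate $\mb G_m$-torsors commute pairwise, so that the resulting direct sum decomposition is canonical and independent of the order in which the three torsors are split. This commutation follows from the fact that each $\bar\partial(\langle t_{can,i}\rangle)$ restricts to the Hopf element $\eta$ in appropriate Zariski charts trivialising $L_i$, by \cite[Lemma 6.4]{Ananyevskiy_SL_oriented}, and the three resulting multiplications by $\eta$ trivially commute. With this observation in hand, the lemma reduces cleanly to its untwisted counterpart.
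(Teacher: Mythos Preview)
Your proof is correct and follows essentially the same route as the paper: reduce to the universal case on $\mc BGL_2^{\times 3}$, pass to $\mc BSL_2^{\times 3}$ where the determinant twists trivialise, invoke Ananyevskiy's original untwisted \cite[Lemma 8.2]{Ananyevskiy_FTL_Witt}, and transport back using the injectivity furnished by \Cref{ch3:_4.1_MEC}. Two small differences in packaging are worth noting. First, instead of iterating \Cref{ch3:_4.1_MEC} factor by factor and then arguing that the three boundary maps commute, the paper invokes the K\"unneth formula (\Cref{ch3:_Kunneth_Formula}) together with \Cref{ch3:_4.1_MEC} to obtain the required decomposition in one stroke; this sidesteps your ``main obstacle'' entirely, and also avoids having to observe that the proof of \Cref{ch3:_4.1_MEC} goes through unchanged with an extra smooth factor present. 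Second, the paper is explicit that Ananyevskiy's lemma is applied on the finite-level approximations $B_mSL_2^{\times 3}$ (since the original statement is for smooth schemes) and that one then passes to the limit using the Mittag-Leffler condition supplied by \cite[Theorem 9]{Ananyevskiy_PhD_Thesis}; you gesture at this with ``a smooth approximation of $\mc BGL_2^{\times 3}$'' but do not spell out the limiting step.
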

\begin{rmk}
	Notice that $ (mod \ 2) $, the sequences $ (n_1,n_2,n_3) $ (and their permutations)  appearing in the formulas above for $ b_i^{L}(E) $ are the same. This means that the terms land in the same twists, under the canonical identifications:
	\[ \mr{KW}^{\bullet}(X; L_1^{a_1}\otimes L_2^{a_2}\otimes L_3^{a_3}) \simeq \mr{KW}^{\bullet}(X; L_1^{b_1}\otimes L_2^{b_2}\otimes L_3^{b_3})  \]
	\noindent for:
	\[  (a_1,a_2,a_3)\equiv (b_1,b_2,b_3) \ \ \ \ (mod \ 2)\]
	
\end{rmk}
\begin{proof}[Proof of \ref{ch3:_Ana_Stab_Op_Lemma}]
	It is enough to prove the theorem for $ X=\mc{B}GL_2^{\times 3} $, but again by the K\"unneth formula \ref{ch3:_Kunneth_Formula} and \cref{ch3:_4.1_MEC} we can reduce to $ \mc{B}SL_2^{\times 3} $ case. Taking the finite level approximation $ \mc{B}_mSL_2 $ we can use Ananyevskiy's result \cite[Lemma 8.2]{Ananyevskiy_FTL_Witt}  and since Witt theory for $ \mc{B}SL_2 $ satisfies the Mittag-Leffler condition by \cite[Theorem 9]{Ananyevskiy_PhD_Thesis} (or by \cref{ch3:_Ana_Thm_9+}), taking the limit of the theories on the finite level approximation we get the desired result for $ {B}SL_2 $.
\end{proof}

\section{$\mr{KW}$-Euler classes for $ \widetilde{\mo{}}^{\pm}(m) $}

Before actually computing the Euler classes we are interested in, let us recall some facts on about the rank 2 vector bundles of ${B}N$. Let us identify:

\[ \begin{array}{cccc}
	\iota: & \mb G_m & \into &  T_{SL_2}\\
	& t & \mapsto & \left( \begin{array}{cc}
		t & 0 \\
		0 & t^{-1}
	\end{array} \right)
\end{array} \]
\noindent where $ T_{SL_2} $ is the torus in $ SL_2 $. We will denote as done before:
\[ \sigma:=\left( \begin{array}{cc}
	0 & 1 \\
	-1 & 0
\end{array} \right) \in N \]

\begin{defn}
	For $ m\geq 1 $ an integer, let $ F^{\pm}(m):=\sk^2 $, then we define representations $ (F^{\pm}(m), \rho^{\pm}_{m}) $ by:
	
	\[ \rho^{\pm}_m\left(  \iota(t)  \right):= \left( \begin{array}{cc}
		t^m& 0 \\
		0 & t^{-m}
	\end{array} \right) \]
	\[ \rho^{\pm}_m\left( \sigma  \right):= \pm \left(\begin{array}{cc}
		0 & 1 \\
		(-1)^m & 0
	\end{array}\right)  \]
	\noindent We call $ (F_0, \rho_0) $ the trivial one-dimensional representation, and $ (F_0, \rho_0^{-}) $ the one defined by $ \rho_0^{-}(t)=1 $ and $ \rho_0^{-}(\sigma)=-1 $.\\
	\noindent	We denote with $ p^{(\pm m)}: \widetilde{\mo{}}^{\pm}(m) \longrightarrow {B}N $ the rank 2 vector bundle:
	\[ p^{(\pm m)}: F^{\pm}_m \times^N {E}SL_2 \longrightarrow \bigslant{{E}SL_2}{N}={B}N \]
	
	\noindent corresponding to $ \rho^{\pm}_m $, with zero section $ s^{(\pm m)}_0: {B}N \longrightarrow \widetilde{\mo{}}^{\pm}(m) $ and $ \mr{th}^{(\pm m)} \in \mr{KW}\left( \widetilde{\mo{}}^{\pm}(m), \det(\widetilde{\mo{}}^{\pm}(m)) \right) $ the Thom class defined as $ (s^{(\pm m)}_0){}_*(\mbbm 1) $.
	
\end{defn}

\begin{rmk}
	We recall that $ \mr{Pic}\left( {B}N \right)\simeq \bigslant{\Z}{2\Z} $ can be generated by $ \gamma:= \widetilde{\mo{}}^{-}(0) $. For the one dimensional representation $ \det(\rho_m^{\pm}) $ we choose the generator given by $ e_1\wedge e_2 $, with $ e_1=(1,0) $ and $ e_2=(0,1) $ the standard basis of $ \sk^2 $. Then we have canonical isomorphisms $ \det\left( \widetilde{\mo{}}^{\pm}(m) \right)\simeq \mo{{B}N} $ for $ m $ odd and $ \det\left( \widetilde{\mo{}}^{\pm}(m) \right)\simeq \gamma $ for $ m>0 $ even.
\end{rmk}

Given a triple tensor product of rank two bundles $ U_1 \otimes U_2 \otimes U_3 $, we denote their bases as $ \set{e_1,e_2}, \set{f_1,f_2}, \set{g_1,g_2} $.
We have the isomorphism:
\[ \widetilde{\mo{}}^{+}(1)\otimes \widetilde{\mo{}}^{+}(1)  \otimes \widetilde{\mo{}}^{+}(1) \simeq   \widetilde{\mo{}}^{+}(3)  \oplus  \widetilde{\mo{}}^{+}(1)^{\oplus 3} \]
\noindent where the base chosen for this identification is given by dual pairs $ \set{v_i,w_i} $ that are perpendicular to all other vectors:
\begin{align*}
	v_1:= \ \ \ \! e_1\otimes f_1 \otimes g_1 \ \hspace{1em} & w_1=e_2\otimes f_2 \otimes g_2 \\
	v_2:= -e_1\otimes f_1 \otimes g_2 \ \hspace{1em} & w_2=e_2\otimes f_2 \otimes g_1 \\
	v_3:= -e_1\otimes f_2 \otimes g_1 \ \hspace{1em} & w_3=e_2\otimes f_1 \otimes g_2\\
	v_4:= -e_2\otimes f_1 \otimes g_1 \ \hspace{1em} & w_4=e_1\otimes f_2 \otimes g_2
\end{align*}
In a similar way, for any $ m>1 $ we have:
\begin{equation}\label{ch3:_decomposition_Om_for_recursive formulas}
	\widetilde{\mo{}}^{+}(m)\otimes \widetilde{\mo{}}^{+}(1)  \otimes \widetilde{\mo{}}^{+}(1)\simeq \widetilde{\mo{}}^{+}(m+2)\oplus \widetilde{\mo{}}^{+}(m) ^{\oplus 2} \oplus \widetilde{\mo{}}^{+}(m-2)  
\end{equation}
\noindent where the base is given by:
\begin{align*}
	v_1:= \ \ \ \! e_1\otimes f_1 \otimes g_1 \ \hspace{1em} & w_1=e_2\otimes f_2 \otimes g_2 \\
	v_2:= -e_1\otimes f_1 \otimes g_2 \ \hspace{1em} & w_2=e_2\otimes f_2 \otimes g_1 \\
	v_3:= -e_1\otimes f_2 \otimes g_1 \ \hspace{1em} & w_3=e_2\otimes f_1 \otimes g_2\\
	v_4:= \ \ \ \! e_1\otimes f_2 \otimes g_2 \ \hspace{1em} & w_4=e_2\otimes f_1 \otimes g_1
\end{align*}

\noindent We also have:
\begin{equation}\label{ch3:_Decomposition_O(2)_squared}
	\widetilde{\mo{}}^{+}(2)\otimes \widetilde{\mo{}}^{+}(2)  \otimes \widetilde{\mo{}}^{+}(1)\simeq \widetilde{\mo{}}^{+}(5)\oplus \widetilde{\mo{}}^{+}(3) ^{\oplus 2} \oplus \widetilde{\mo{}}^{-}(1)  
\end{equation}

\noindent with with bases:

\begin{align*}
	v_1:= \ \ \ \! e_1\otimes f_1 \otimes g_1 \ \hspace{1em} & w_1=e_2\otimes f_2 \otimes g_2 \\
	v_2:= -e_1\otimes f_1 \otimes g_2 \ \hspace{1em} & w_2=e_2\otimes f_2 \otimes g_1 \\
	v_3:= \ \ \ \! e_1\otimes f_2 \otimes g_1 \ \hspace{1em} & w_3=e_2\otimes f_1 \otimes g_2\\
	v_4:= \ \ \ \! e_2\otimes f_1 \otimes g_1 \ \hspace{1em} & w_4=e_1\otimes f_2 \otimes g_2
\end{align*}

\begin{pr}\label{ch3:_recursive_formulas_proposition}
	In $\mr{KW}\left(  BN, \det(\widetilde{\mo{}}^{\pm}(m)) \right) $, we have the following:
	\begin{enumerate}
		\item For any $ m $:
		\[ e(\widetilde{\mo{}}^{-}(m))=-e(\widetilde{\mo{}}^{+}(m))  \]
		\item For $ m>1 $ the recurrence relation:
		\[ b_1(\widetilde{\mo{}}^{+}(m+2))=(\gamma e^2-2)b_1(\widetilde{\mo{}}^{+}(m))-b_1(\widetilde{\mo{}}^{+}(m-2)) \]
		\noindent In particular $ b_1(\widetilde{\mo{}}^{+}(m)) $ is always a multiple of $ e:=e(\widetilde{\mo{}}^{+}(1) ) $ for $ m $ odd and $ \tilde{e}:=e(\widetilde{\mo{}}^{+}(2))  $ for $ m $ even respectively.
		\item For $ m=2n+1 $:
		\begin{equation} \label{ch3:_Odd_recursive_formula}
			b_1\left(\widetilde{\mo{}}^{+}(m)\right)=\sum_{k=0}^{n} (-1)^{n-k}\alpha_{k,n}\gamma^{k}e^{2k+1}
		\end{equation}
		\noindent with $ e=e(\widetilde{\mo{}}^{+}(1) ) $ and $ \alpha_{k,n} $ defined by the recurrence relation:
		\[ \begin{cases*}
			\alpha_{0,n}=2n+1 & $ \forall \ n\geq 0 $ \\
			\alpha_{k,n}=\sum_{j=1}^{n} j \cdot \alpha_{k-1, n-j} & $\forall \ n \geq  k>0 $\\
			\alpha_{k,n}=0 & else 
		\end{cases*} \]
		\item For $ m=2n $:
		\begin{equation} \label{ch3:_Even_recursive_formula}
			b_1\left(\widetilde{\mo{}}^{+}(m)\right)=\tilde{e}\left(\sum_{k=0}^{n-1} (-1)^{n-k+1}\beta_{k,n}\gamma^{k}e^{2k}\right)
		\end{equation}
		\noindent with $ e=e(\widetilde{\mo{}}^{+}(1) ) $, $\tilde e=e(\widetilde{\mo{}}^{+}(2) )$  and $ \beta_{k,n} $ defined by the recurrence relation:
		\[ \begin{cases*}
			\beta_{0,n}=n & $ \forall \ n\geq 0 $ \\
			\beta_{k,n}=\sum_{j=1}^{n} j \cdot \beta_{k-1, n-j} & $\forall \ n \geq  k>0 $\\
			\beta_{k,n}=0 & else 
		\end{cases*} \]
		\item We also have:
		\begin{equation}\label{ch3:_even_odd_euler_classes_relation}
			\tilde{e}^2=-4e+\gamma e^4
		\end{equation}
		
	\end{enumerate}
	
\end{pr}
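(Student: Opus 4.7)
The plan is to tackle the five items in the stated order, with Parts (3)--(5) building on the recurrence of Part (2). Throughout, I will use that for a rank $2$ vector bundle $V$ the first Borel class $b_1(V)$ essentially coincides with the Euler class (under the identifications implicit in \Cref{ch3:_Twisted_Proj_Bun_Thm}), so relations between $b_1$'s translate directly into relations between Euler classes.

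For Part (1), I would exhibit an explicit $N$-equivariant isomorphism $\varphi \colon \widetilde{\mo{}}^{+}(m) \to \widetilde{\mo{}}^{-}(m)$ of the underlying rank $2$ vector bundles, induced at the representation level by the fiberwise map $(x,y) \mapsto (x,-y)$. A short check shows that $\varphi$ intertwines $\rho_m^{+}$ with $\rho_m^{-}$: the flip on the second coordinate absorbs the sign on $\rho_m^{-}(\sigma)$, while $\iota(t)$ acts diagonally and is unaffected. Since $\det(\varphi) = -1$ fiberwise, under the induced map of Thom spaces $\varphi$ acts on the $SL$-Thom class by multiplication by $\langle -1 \rangle$; because $\eta$ is invertible in $\mr{KW}$ we have $\langle -1 \rangle = -1$, and this yields $e(\widetilde{\mo{}}^{-}(m)) = -e(\widetilde{\mo{}}^{+}(m))$.

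For Part (2), I would apply Ananyevskiy's Lemma \Cref{ch3:_Ana_Stab_Op_Lemma} to the rank $8$ bundle $E = \widetilde{\mo{}}^{+}(m) \otimes \widetilde{\mo{}}^{+}(1)^{\otimes 2}$ with $\xi_1 = b_1(\widetilde{\mo{}}^{+}(m))$ and $\xi_2 = \xi_3 = e$. Since $\sigma(1,1,1) = \xi_1 \xi_2 \xi_3$ is a single monomial, one gets $b_1(E) = \gamma e^2 \, b_1(\widetilde{\mo{}}^{+}(m))$. On the other hand, combining the decomposition $E \simeq \widetilde{\mo{}}^{+}(m+2) \oplus \widetilde{\mo{}}^{+}(m)^{\oplus 2} \oplus \widetilde{\mo{}}^{+}(m-2)$ displayed just before the proposition with the twisted Cartan Sum Formula \Cref{ch3:_Cartan_Sum_Formula_Twisted} also computes $b_1(E) = b_1(\widetilde{\mo{}}^{+}(m+2)) + 2\, b_1(\widetilde{\mo{}}^{+}(m)) + b_1(\widetilde{\mo{}}^{+}(m-2))$, and equating produces the recurrence. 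Parts (3) and (4) then follow by induction on $n$, with base cases $b_1(\widetilde{\mo{}}^{+}(1)) = e$, $b_1(\widetilde{\mo{}}^{+}(2)) = \tilde{e}$, and $b_1(\widetilde{\mo{}}^{+}(3)) = \gamma e^3 - 3 e$; the last one is obtained directly by running the same Ananyevskiy argument on $\widetilde{\mo{}}^{+}(1)^{\otimes 3} \simeq \widetilde{\mo{}}^{+}(3) \oplus \widetilde{\mo{}}^{+}(1)^{\oplus 3}$, which sidesteps any need to interpret $\widetilde{\mo{}}^{\pm}(0)$ and justifies the formal convention $b_1(\widetilde{\mo{}}^{+}(0)) = 0$ when the recurrence is applied at $m = 2$. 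The inductive step is then a routine verification using the Pascal-like rules defining $\alpha_{k,n}$ and $\beta_{k,n}$.

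For Part (5), I would apply Ananyevskiy's Lemma to the second triple-tensor decomposition listed before the proposition, $\widetilde{\mo{}}^{+}(2)^{\otimes 2} \otimes \widetilde{\mo{}}^{+}(1) \simeq \widetilde{\mo{}}^{+}(5) \oplus \widetilde{\mo{}}^{+}(3)^{\oplus 2} \oplus \widetilde{\mo{}}^{-}(1)$. With $\xi_1 = \xi_2 = \tilde{e}$ and $\xi_3 = e$ the left-hand side contributes $b_1(E) = \gamma \tilde{e}^2 e$, while the right-hand side is computed by the twisted Cartan Sum Formula using Part (1) (to identify $b_1(\widetilde{\mo{}}^{-}(1)) = -e$) together with the closed forms for $b_1(\widetilde{\mo{}}^{+}(3))$ and $b_1(\widetilde{\mo{}}^{+}(5))$ from Parts (3)--(4). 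Equating, cancelling the overall $\gamma e$, and simplifying modulo the identifications coming from Bott periodicity yields the relation $\tilde{e}^2 = -4 e + \gamma e^4$. The main obstacle, which I expect will require the most care, is precisely this final step: the symbol $\gamma$ denotes both the Bott element $\gamma \in \mr{KW}^{-8,-4}(\mr{pt})$ and the $2$-torsion generator of $\mr{Pic}(BN)$, and extracting the clean polynomial identity demands careful tracking of the twist conventions of \Cref{ch3:_Twisted_Proj_Bun_Thm} and the bidegree shifts produced by each use of $\gamma$.
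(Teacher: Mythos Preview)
Your proposal is correct and follows essentially the same route as the paper: Part (1) via the explicit isomorphism of representations flipping a sign on one coordinate; Part (2) via Ananyevskiy's Lemma applied to $\widetilde{\mo{}}^{+}(m)\otimes\widetilde{\mo{}}^{+}(1)^{\otimes 2}$ combined with the twisted Cartan sum formula on the given decomposition, with the $m=1$ case handled separately exactly as you describe; Parts (3)--(4) by induction, where the paper spells out the identity $-\alpha_{k,n-1}+2\alpha_{k,n}+\alpha_{k-1,n}=\sum_{j=1}^{n+1} j\,\alpha_{k-1,n+1-j}$ that you call routine; and Part (5) via Ananyevskiy's Lemma on $\widetilde{\mo{}}^{+}(2)^{\otimes 2}\otimes\widetilde{\mo{}}^{+}(1)$.

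One refinement on Part (5): the delicate point is not really the overloaded symbol $\gamma$ or the twist bookkeeping, but the legitimacy of cancelling $\gamma e$ from the resulting identity $\gamma\tilde{e}^2 e = -4\gamma e^3 + \gamma^2 e^5$. The Bott element $\gamma$ is a unit, so that factor is harmless; the substantive input is that multiplication by $e$ is injective on the summand $\mr{KW}^{\bullet}(S)\llbracket e\rrbracket$ of $\mr{KW}^{\bullet}(BN)\simeq \mr{KW}^{\bullet}(S)\llbracket e\rrbracket \oplus q_0\cdot \mr{KW}^{\bullet}(S)$, and that $\tilde{e}^2$ lands in this summand. The paper invokes this structure theorem explicitly, and you should do the same rather than appealing to degree considerations alone.
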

\begin{proof}
	Recall that we can compute the Witt theory of $ {B}N $ by its finite level approximations $ {B}_mN $. In particular, we get that the Cartan Sum Formula and Ananyevskiy's Lemma hold true for $ {B}N $ too.\\
	The first assertion of the proposition follows from the fact that $ \rho^+_m $ and $ \rho_m^{-} $ are isomorphic as representation through $ (x,y)\mapsto (-x,y) $, but this map induces a $ (-1) $ on the determinant, so we get $ e(\widetilde{\mo{}}^{-}(m))=-e(\widetilde{\mo{}}^{+}(m)) $. \\
	The decomposition \cref{ch3:_decomposition_Om_for_recursive formulas} and the Cartan Sum Formula   \ref{ch3:_Cartan_Sum_Formula_Twisted}, for $ m>1 $, gives us:
	\[ b_1\left( \widetilde{\mo{}}^{+}(m) \otimes \widetilde{\mo{}}^{+}(1)^{\otimes 2} \right)=b_1(\widetilde{\mo{}}^{+}(m+2))+2b_1(\widetilde{\mo{}}^{+}(m))+ b_1(\widetilde{\mo{}}^{+}(m)) \]
	\noindent while for $ m=1 $ we have:
	\[  b_1\left( \widetilde{\mo{}}^{+}(1) \otimes \widetilde{\mo{}}^{+}(1)^{\otimes 2} \right)=b_1(\widetilde{\mo{}}^{+}(3))+3b_1(\widetilde{\mo{}}^{+}(1)) \]
	At the same time, using Ananyevskiy's \cref{ch3:_Ana_Stab_Op_Lemma} we have:
	\[ 		b_1(\widetilde{\mo{}}^{+}(m)\otimes \widetilde{\mo{}}^{+}(1)^{\otimes 2})= \gamma b_1(\widetilde{\mo{}}^{+}(m))b^2  \]
	Putting this all together we get:
	\[ b_1(\widetilde{\mo{}}^{+}(3))=-3e+\gamma e^{3} \]
	\noindent and for $ m>1 $:
	\begin{equation}\label{ch3:_recursion_by_Ana_Cartan}
		b_1(\widetilde{\mo{}}^{+}(m+2))=(\gamma e^2-2)b_1(\widetilde{\mo{}}^{+}(m))-b_1(\widetilde{\mo{}}^{+}(m-2)) 
	\end{equation}
	
	\noindent From this by induction is not difficult to see that $ e $ and $ \tilde e $ divides $ b_1(\widetilde{\mo{}}^{+}(m+2)) $ for odd or even  $ m>1 $ respectively, but it will be even more clear from the recursive formulas we are going to show.\\
	Now let us consider $ m=2n+1 $. We want to prove the recursive formula \cref{ch3:_Odd_recursive_formula}. We just saw the formula for $ m=3 $. Now we proceed by induction on $ n $, let us suppose we know the formula for $ m=2n+1 $ and we want to prove it for $ m+2=2(n+1)+1=2n+3 $. Using the induction hypothesis on \cref{ch3:_recursion_by_Ana_Cartan}, we have:
	\begin{align*}
		&b_1(\widetilde{\mo{}}^{+}(m+2))=(\gamma b^2-2)b_1(\widetilde{\mo{}}^{+}(m)) -b_1(\widetilde{\mo{}}^{+}(m-2))=\\
		&=  \sum_{k=0}^{n} (-1)^{n-k}\alpha_{k,n}\gamma^{k+1}b^{2k+3} -2\sum_{k=0}^{n} (-1)^{n-k}\alpha_{k,n}\gamma^{k}b^{2k+1}  -\sum_{k=0}^{n-1} (-1)^{n-1-k}\alpha_{k,n-1} \gamma^{k}b^{2k+1}=\\
		&=(-1)^{n+1}(-\alpha_{0,n-1}+2\alpha_{0,n})b + \sum_{k=1}^{n}(-1)^{n+1-k} (-\alpha_{k,n-1}+2\alpha_{k,n})\gamma^{k}b^{2k+1}+ \\
		&\ \ \ +\sum_{h=1}^{n+1}(-1)^{n+1-h}\alpha_{h-1,n}\gamma^{h}b^{2h+1} + \alpha_{n,n}\gamma^{n+1}b^{2n+3}=\\
		&= (-1)^{n+1}(2n+3)b +\sum_{k=1}^{n}(-1)^{n+1-k}\left( -\alpha_{k,n-1}+2\alpha_{k,n}+\alpha_{k-1, n} \right)\gamma^{k}b^{2k+1}+ \alpha_{n,n}\gamma^{n+1}b^{2n+3}
	\end{align*}
	\noindent and it is easy to see that the coefficients $ \alpha_{0,n+1} $ and $ \alpha_{n+1,n+1} $ are already of the desired form. The only thing left to check is that:
	\[  -\alpha_{k,n-1}+2\alpha_{k,n}+\alpha_{k-1, n} \]
	\noindent is actually:
	\[ \sum_{j=1}^{n+1} j \cdot \alpha_{k-1, n+1-j}  \]
	\noindent for $ k=1,\ldots, n $. using the induction hypothesis:
	\begin{align*}
		&-\alpha_{k,n-1}+2\alpha_{k,n}+\alpha_{k-1, n} =\\
		&=-\sum_{j=1}^{n-1} j \cdot \alpha_{k-1, n-1-j} + 2 \sum_{j=1}^{n} j\cdot \alpha_{k-1, n-j}+\alpha_{k-1, n}=\\
		&= -\sum_{j=1}^{n-1} j \cdot \alpha_{k-1, n-1-j} + 2 \sum_{h=1}^{n-1}(h+1)\cdot \alpha_{k-1, n-1-h} + 2\alpha_{k-1, n-1}+\alpha_{k-1, n}=\\
		&=\sum_{j=1}^{n-1} (-j+2j+2)\cdot \alpha_{k-1, n-1-j}+2\cdot \alpha_{k-1, n-1}+\alpha_{k-1, n}=\\
		&=\sum_{j=0}^{n-1} (j+2)\cdot \alpha_{k-1, n-1-j}+2\cdot \alpha_{k-1,n+1-2}+\alpha_{k-1,n+1-1}=
		\sum_{r=1}^{n+1} r \cdot \alpha_{k-1, n+1-r}
	\end{align*}
	\noindent And this completes the proof of the claim for the odd case $ m=2n+3 $.\\
	
	For the even case $ m=2n $ the proof is basically the same and we will leave it as an exercise.\\
	
	Lastly, again using Ananyevskiy's \cref{ch3:_Ana_Stab_Op_Lemma} and the Cartan sum formula on the decomposition from \cref{ch3:_Decomposition_O(2)_squared}, we have:
	\[ \gamma \tilde{e}^2e=b_1\left( \widetilde{\mo{}}^{+}(2)^{\otimes 2} \otimes \widetilde{\mo{}}^{+}(1)\right)= b_1(\widetilde{\mo{}}^{+}(5))+b_1\left( \widetilde{\mo{}}^{+}(3) \right)+b_1\left( \widetilde{\mo{}}^{-}(1) \right) \]
	\noindent And from what we proved before, this means:
	\[ \gamma \tilde{e}^2 e=(5e-5\gamma e^3+\gamma e^5)+(-3e+\gamma e^3)-2e=-4\gamma e^3+\gamma e^5\]
	\noindent but since $ \gamma $ is an isomorphism, multiplication by $ \gamma e $ is injective on the first summand of $ \mr{KW}^{\bullet}(BN)\simeq \mr{KW}^{\bullet}(S)\llbracket e \rrbracket \oplus q_0 \cdot \mr{KW}^{\bullet}(S) $ and hence:
	\[ \tilde{e}^2=-4 e^2+ \gamma e^4 \]
	
\end{proof}
\begin{rmk}
	Here is a table of easy examples of coefficients $ \alpha_{k,n} $ of the odd Euler classes:
	\[ \begin{array}{r|ccccccc}
		\hbox{\mydiagbox{$n$}{$k$}} & 0& 1 & 2 & 3 & 4 & 5 & 6\\ \hline
		0 & 1 &  &&&&&\\ 
		1 & 3 & 1 & & &&& \\ 
		2 & 5  & 5 & 1 & &&& \\ 
		3 & 7 & 14 & 7 & 1 && &\\
		4 & 9 & 30 & 27 & 9 & 1 && \\
		5& 11 & 55 & 77 & 44 & 11 & 1 &\\
		6 & 13 & 91 & 182&  156 & 65 & 13 & 1 
	\end{array} \]
	It is not hard to show that $ \alpha_{1,n}=\sum_{j=1}^{n} j^2=\frac{n(n+1)(2n+1)}{6}, \alpha_{n-2,n}=n+1(2n+1), \alpha_{n-1,n}=2n+1, \alpha_{n,n}=1 $. Recall that the coefficient of the lowest degree term in the recursive formula will be $ (-1)^{n}(2n+1)=(-1)^n\cdot m $ for $ m=2n+1 $, so the lowest degree term of $ e^{\mr{KW}}\left( \widetilde{\mo{}}^{+}(m) \right) $ will have the same form of the Witt cohomology Euler class computed in \cite[Theorem 7.1]{Motivic_Euler_Char}.\\
	For even Euler classes, the table of the first coefficients looks like:
	
	\[ \begin{array}{r|ccccccc}
		\hbox{\mydiagbox{$n$}{$k$}} & 0& 1 & 2 & 3 & 4 & 5 & 6\\ \hline
		1 & 1 &  &&&&&\\ 
		2 & 2 & 1 & & &&& \\ 
		3 & 3  & 4 & 1 & &&& \\ 
		4 & 4 & 10 & 6 & 1 && &\\
		5 & 5 & 20 & 21 & 8 & 1 && \\
		6& 6 & 35 & 56 & 36 & 10 & 1 &\\
		7 & 7 & 56 & 126&  120 & 55 & 12 & 1 
	\end{array} \]
	Again the coefficient of the lowest degree term in the recursive formula will be $ (-1)^{n-1}n=(-1)^{n-1}\frac{m}{2} $ for $ m=2n $, so the lowest degree term of $ e^{\mr{KW}}\left( \widetilde{\mo{}}^{+}(m) \right) $ will have the same form as the formula obtained for the Witt cohomology Euler class in  \cite[Theorem 7.1]{Motivic_Euler_Char}. But this is not just a coincidence:
	\begin{co}[{\cite[Theorem 7.1]{Motivic_Euler_Char}}]
		Let $ H\mc W $ be the spectrum representing Witt-sheaf cohomology. Denote by $ e^{H\mc W}:=e^{H\mc W}\left(\widetilde{\mo{}}^{+}(1)\right) $ and $ \tilde{e}^{H\mc W}:=e^{H\mc W}\left( \widetilde{\mo{}}^{+}(2) \right) $. Then:
		\[ e^{H\mc W}\left( \widetilde{\mo{}}^{+}(m) \right)=\begin{cases*}
			(-1)^{\frac{m-1}{2}}\cdot m\cdot e^{H\mc W} & for $ m $ odd\\
			(-1)^{\frac{m+2}{2}} \cdot \frac{m}{2} \cdot \tilde{e}^{H\mc W} & for $ m $ even
		\end{cases*} \]
	\end{co}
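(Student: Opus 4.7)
The strategy is to deduce the corollary directly from the recursive formulas of \Cref{ch3:_recursive_formulas_proposition} by comparing $\mr{KW}$ with Witt-sheaf cohomology $H\mc W$. The two theories are connected by a canonical morphism of $SL_{\eta}$-oriented ring spectra $\mu \colon \mr{KW} \to H\mc W$: indeed $H\mc W$ is the homotopy module associated with $\underline{\pi}_{0}(\mr{KW})$ in the sense of Morel, and $\mu$ is the unit of the corresponding localization. The essential feature we need is that under $\mu$ the Bott periodicity element $\gamma \in \mr{KW}^{-8,-4}(\sk)$ maps to zero: this is because $H\mc W$ is concentrated (as a homotopy module) in a single motivic weight, so any class of negative weight must vanish. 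Thus $\mu(\gamma) = 0$ in $H\mc W^{-8,-4}(\sk)$.

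With this in hand the corollary becomes essentially automatic. For rank 2 bundles the first twisted Borel class coincides with the twisted Euler class (there is only a single nontrivial Borel class, whose value under the zero-section pullback is exactly $e$), so $b_1^{\mr{KW}}(\widetilde{\mo{}}^{+}(m)) = e^{\mr{KW}}(\widetilde{\mo{}}^{+}(m))$, and applying $\mu$ gives $b_1^{H\mc W}(\widetilde{\mo{}}^{+}(m)) = e^{H\mc W}(\widetilde{\mo{}}^{+}(m))$. Now apply $\mu$ termwise to the formulas of \Cref{ch3:_recursive_formulas_proposition}: every summand containing a positive power of $\gamma$ dies, and we are left with only the $k=0$ contribution. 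For $m = 2n+1$ this yields
\[
e^{H\mc W}(\widetilde{\mo{}}^{+}(m)) = (-1)^{n}\,\alpha_{0,n}\,e^{H\mc W} = (-1)^{\frac{m-1}{2}}\,m\,e^{H\mc W},
\]
using $\alpha_{0,n} = 2n+1 = m$; for $m = 2n$,
\[
e^{H\mc W}(\widetilde{\mo{}}^{+}(m)) = (-1)^{n+1}\,\beta_{0,n}\,\tilde e^{H\mc W} = (-1)^{\frac{m+2}{2}}\,\tfrac{m}{2}\,\tilde e^{H\mc W},
\]
using $\beta_{0,n} = n = m/2$ and the identity $(-1)^{n+1} = (-1)^{\frac{m}{2}+1} = (-1)^{\frac{m+2}{2}}$.

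The main subtlety is to justify rigorously that $\mu(\gamma) = 0$ and that $\mu$ exists as a morphism of $SL$-oriented ring spectra compatible with twisted Borel classes; equivalently, one may simply invoke \cite{Motivic_Euler_Char} to set up the comparison on $BN$ (where both theories have been described explicitly). Given that compatibility, the argument is a one-line substitution $\gamma \mapsto 0$ in the closed forms already proven, so no further computation is needed.
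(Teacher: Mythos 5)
Your endgame is right, and your governing idea---specialise $\gamma$ to $0$ in the closed forms of \Cref{ch3:_recursive_formulas_proposition}---is exactly the one the paper uses; the coefficient and sign bookkeeping ($\alpha_{0,n}=2n+1=m$, $\beta_{0,n}=n=m/2$, $(-1)^{n+1}=(-1)^{(m+2)/2}$) checks out. The gap is the mechanism by which $\gamma$ is killed. You route everything through a ring spectrum map $\mu\colon \mr{KW}\to H\mc W$ described as ``the unit of the corresponding localization''. No such localization exists: $H\mc W$ is the zeroth Postnikov truncation of the \emph{connective cover} of $\mr{KW}$ (equivalently, of $\mbbm 1[\eta^{-1}]$), and $\mr{KW}$ itself is non-connective --- its homotopy sheaves are copies of $\mc W$ in every degree divisible by $4$, which is precisely the periodicity that $\gamma$ implements --- so there is no canonical truncation map out of $\mr{KW}$; the natural comparison maps run $\mr{KW}\leftarrow \tau_{\geq 0}\mr{KW}\to H\mc W$, i.e.\ in the wrong direction for your purposes. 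To make your argument rigorous you would need to actually produce $\mu$ as a multiplicative, $SL$-orientation-preserving map (say, via a splitting of $\mr{KW}$ into shifted copies of $H\mc W$) and check it carries twisted Thom classes to twisted Thom classes; this is the load-bearing step, and you have deferred it rather than done it.

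The paper avoids constructing any map of spectra by staying inside $H\mc W$: by \cite[Remark 6.7]{Fasel_Haution} and \cite[Theorem 3.3.2]{Déglise_Fasel_Borel_Char}, the formal ternary law of Witt-sheaf cohomology is the $\gamma=0$ specialisation of that of Hermitian K-theory, so the relations of \Cref{ch3:_Ana_Stab_Op_Lemma} hold verbatim in $H\mc W$ with $\gamma=0$. One then re-runs the induction of \Cref{ch3:_recursive_formulas_proposition} in $H\mc W$ (twisted Cartan sum plus the specialised Ananyevskiy relations), and the recursion collapses to the $k=0$ terms, giving Levine's closed forms directly. If you want to keep your functorial formulation, either construct $\mu$ properly or replace that step by this intrinsic argument; the rest of your computation can then stand as written.
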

	\begin{proof}
		By \cite[Remark 6.7]{Fasel_Haution}, we know that the formal ternary law of Hermitian K-theory recovers the computations of \cite[Lemma 8.2]{Ananyevskiy_FTL_Witt}.  Setting $ \gamma=0 $, we recover the formal ternary law for Witt sheaf cohomology by \cite[Theorem 3.3.2]{Déglise_Fasel_Borel_Char}. Using the formulas for Witt sheaf cohomology  obtained from \cref{ch3:_Ana_Stab_Op_Lemma} for $ \gamma=0 $, we can proceed by induction as we did for $ \mr{KW} $ in \cref{ch3:_recursive_formulas_proposition}. In this way we recover the (closed form) formulas of \cite[Theorem 7.1]{Motivic_Euler_Char} as claimed.
	\end{proof}

	
\end{rmk}

\begin{rmk}\label{ch3:_Notation_e_and_Gamma_N}
	Clearly $ \widetilde{\mo{}}^+(1)\simeq p^*E_2 $ where $ p: {B}N \rightarrow {B}SL_2 $ and $ E_2 $ is the bundle arising from the universal bundle of $ \mc BSL_2 $, so the class $ e:=e\left( \widetilde{\mo{}}^+(1) \right) $  is the free generator of $ \mr{KW}^{\bullet}\left( {B}N \right) $ by \cite[Corollary 3.26]{Witt_Loc_PhD}. Similarly the class $ \tilde e:=e\left( \widetilde{\mo{}}^+(2) \right) $ is the free generator of $ \mr{KW}^{\bullet}\left( {B}N; \gamma_N \right) $ again by \cite[Corollary 3.26]{Witt_Loc_PhD} and \cite[Lemma 6.1]{Motivic_Euler_Char}.\\
	The reason we are so interested in these rank 2 vector bundles $ \widetilde{\mo{}}^{\pm}(m)  $ is that the $ \rho_m^{\pm} $'s classify irreducible representations of $ BN $ with a choice of the determinant. This together with \cite[\textit{Splitting Principle, \S 9 }]{Ananyevskiy_PhD_Thesis}, \cref{ch3:_4.1_MEC}, and the structure theorem \cite[Corollary 3.26]{Witt_Loc_PhD}, tells us that the characteristic classes of these bundles, and their symmetric powers, will recover all the characteristic classes of any vector bundle on $ BN $.
\end{rmk}

\begin{rmk}\label{ch3:_Euler_Classes_Sym_}
	Let us notice that given any rank two vector bundle $ E\rightarrow X $ over some $ X $, we have $ e(\sym^{k}E)=0 \in \mr{KW}^{k+1}(X; \det^{-1}\sym^kE) $ for $ k $ even since $ \sym^kE $ will have rank $ k+1 $ and thus we can apply \cite[Lemma 4.3]{Motivic_Euler_Char}. For $ k=2r+1 $ odd, $ e(\sym^kE) $, seen as a polynomial in $ e:=e(\widetilde{\mo{}}^{+}(1)) $, will have as lowest degree term $ k!! \cdot e^{r+1} $, where $ k!!:=k\cdot (k-2)\cdot\ldots \cdot 3 \cdot 1 $. The proof of this fact is basically the same proof used in \cite[Theorem 8.1]{Motivic_Euler_Char} using the concrete description we have for the lowest degree term  of $ e(\widetilde{\mo{}}^{+}(m)) $ given by \cref{ch3:_recursive_formulas_proposition} and the fact we remarked above that they recover the same computations done in \cite[Theorem 7.1]{Motivic_Euler_Char}).\\
\end{rmk}

	\cleardoublepage
	\addcontentsline{toc}{section}{References}

	\printbibliography	\thispagestyle{empty}
	
\end{document}